\documentclass{amsart}
\usepackage{amsmath, amsthm, amssymb, mathtools}
\usepackage{mathrsfs}
\usepackage{algorithm}
\usepackage{algorithmic}
\usepackage[all]{xy}
\usepackage{graphicx}
\usepackage{mathptmx}
\usepackage[margin=4cm]{geometry}
\usepackage{url}
\usepackage{combelow}
\usepackage{subfig}
\usepackage{xcolor}
\usepackage{array}
\usepackage{booktabs}

\setlength{\heavyrulewidth}{1.5pt}
\setlength{\abovetopsep}{4pt}

\allowdisplaybreaks

\newcommand{\algref}[1]{Algorithm~\ref{#1}}

\theoremstyle{plain}
\newtheorem{thm}{Theorem}[section]

\newtheorem{lemma}[thm]{Lemma}

\theoremstyle{definition}
\newtheorem{defn}[thm]{Definition}
\newtheorem{ex}[thm]{Example}
\theoremstyle{remark}
\newtheorem{rem}[thm]{Remark}
\theoremstyle{definition}
\newtheorem*{defn*}{Definition}
\theoremstyle{plain}
\newtheorem*{thm*}{Theorem}

\newcommand{\thmref}[1]{Theorem~\ref{#1}}
\newcommand{\lemmaref}[1]{Lemma~\ref{#1}}

\newcommand{\exref}[1]{Example~\ref{#1}}
\newcommand{\secref}[1]{Section~\ref{#1}}

\newcommand{\tabref}[1]{Table~\ref{#1}}
\newcommand{\figref}[1]{Figure~\ref{#1}}

\newcommand{\NN}{\mathbb{N}}
\newcommand{\ZZ}{\mathbb{Z}}
\newcommand{\RR}{\mathbb{R}}
\newcommand{\CC}{\mathbb{C}}

\newcommand{\PP}{\mathbb{P}}

\newcommand{\N}[1]{N_{#1}}

\newcommand{\codim}[1]{\operatorname{codim} (#1)}

\renewcommand{\dim}[1]{\operatorname{dim} (#1)}
\renewcommand{\deg}[1]{\operatorname{deg} (#1)}

\begin{document}

\title{Sampling and homology via bottlenecks}

\author{Sandra Di Rocco}
\address{Department of mathematics, KTH, 10044,
  Stockholm, Sweden}
\email{dirocco@math.kth.se}
\urladdr{https://people.kth.se/~dirocco/}

\author{David Eklund}
\address{DTU Compute, Richard Petersens Plads,
Building 321, DK-2800 Kgs. Lyngby, Denmark}
\email{daek@math.kth.se}
\urladdr{https://www2.compute.dtu.dk/~daek/}

\author{Oliver G\"afvert}
\address{Department of mathematics, KTH, 10044,
	Stockholm, Sweden}
\email{oliverg@math.kth.se}
\urladdr{https://people.kth.se/~oliverg/}





\begin{abstract}
In this paper we present an efficient algorithm to produce a provably
dense sample of a smooth compact variety. The procedure is partly
based on computing \emph{bottlenecks} of the variety. Using geometric information such as the
bottlenecks and the \textit{local reach} we also provide bounds on the density of the sample needed
in order to guarantee that the homology of the variety
can be recovered from the sample. An implementation of the algorithm
is provided together with numerical experiments and a computational
comparison to the algorithm by Dufresne et. al. \cite{DEHH18}.
\end{abstract}

\maketitle

\section{Introduction}

%
%
%
%
%

Sampling an object is an important problem when trying to recover
information about its structure \cite{NSW08, diaconis, DEHH18, CS09,
  BO19}. Efficiency of the sampling algorithm becomes  central as
many techniques tend to grow exponentially in complexity with the
ambient dimension. In our setting, the objects we study are zero-sets
of polynomial equations. More precisely, we study smooth, compact
algebraic varieties in $\RR^n$. The information we want to recover
are topological invariants describing the shape of the variety, such
as the homology groups. For instance, the zeroth homology group counts
the number of connected components of the variety. The techniques we
develop can be applied in a range of areas where the object of study
is algebraic in nature, for instance kinematics \cite{DRESW10, selig}
and biochemistry \cite{CMTW10}.

Existing algorithms for sampling algebraic varieties can be divided
into two categories: probabilistic methods \cite{BO19, diaconis, CS09}
and provably dense sampling \cite{DEHH18}. In \secref{sec:sampling} we
develop a sampling technique that constructs a provably dense sample
on $X$. We also provide a computational comparison to the algorithm by
Dufresne et. al. \cite{DEHH18} (see Section \ref{sec:comp}) showing
better performance of our algorithm as the codimension increases. A public implementation of our algorithm is available at \cite{sampling_bottlenecks2020}. 

By provably dense sample we mean an $\epsilon$-sample in the sense of the
following definition:
\begin{defn*}
A {\it sample} of a variety $X\subset\RR^n$ is a finite subset
$E\subset X$. For $\epsilon>0$ a sample $E\subset X$ is called an
$\epsilon$-sample if for every $x\in X$ there is an element $e\in E$
such that $\|x-e\|<\epsilon.$ In this case we also say that $E$ is
$\epsilon$-dense.
\end{defn*}
To produce an $\epsilon$-sample from $X$ we start from its defining
equations. The basic idea is then to intersect the variety with a grid
of linear spaces of complementary dimension. Figure
\ref{fig:planar-curve-intro} shows a curve in $\RR^2$ intersected with
a 2-dimensional grid. The grid-size, denoted by $\delta,$ is determined by the narrowest  {\it bottleneck} of the variety. A bottleneck of $X$ is a pair
of distinct points $x,y \in X$ such that $(x-y)$ is normal to $X$ at
both $x$ and $y$. Estimates for the number of bottlenecks and
numerical algorithms to compute them have been recently studied, see
\cite{E18, DEW19}. In \secref{sec:bottlenecks} we give a system of
equations defining the bottlenecks of $X$. In case $X$ has only
finitely many bottlenecks, these equations can be used to find all
bottlenecks, including the narrowest one. In \thmref{thm:finite-bn} we
provide conditions under which finiteness is achieved:
\begin{thm*}\ref{thm:finite-bn}. 
A generic complete intersection has only finitely many bottlenecks.
\end{thm*}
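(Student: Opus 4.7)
The plan is a Bertini-type parameter count. Let $V$ be the affine space of tuples $f=(f_1,\ldots,f_c)$ of polynomials of prescribed multidegree and $X_f=\{f_1=\cdots=f_c=0\}\subset\CC^n$. By a standard Bertini argument the locus where $X_f$ is smooth of codimension $c$ is Zariski-dense open in $V$, and since the bottleneck equations are algebraic, finiteness of complex bottlenecks implies finiteness of real bottlenecks; I therefore work over $\CC$ with a generic smooth $X_f$. A bottleneck is a pair $(x,y)$ with $x\neq y$ for which $x-y=\sum_i\alpha_i\nabla f_i(x)=\sum_i\beta_i\nabla f_i(y)$ for some $\alpha,\beta\in\CC^c$, and these multipliers are uniquely determined at smooth points by the linear independence of the $\nabla f_i$. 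A naive count in $\CC^n\times\CC^n\setminus\Delta$ gives $2c$ membership conditions and $2(n-c)$ orthogonality conditions, totalling $2n$ conditions in the $2n$-dimensional ambient space, so the expected dimension of the bottleneck locus is zero.

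To make this rigorous I would introduce the Lagrange multipliers explicitly and form the incidence variety
\[
I=\bigl\{(f,x,y,\alpha,\beta)\in V\times U\times\CC^c\times\CC^c \ :\ f_i(x)=f_i(y)=0,\ x-y=\textstyle\sum_i\alpha_i\nabla f_i(x)=\sum_i\beta_i\nabla f_i(y)\bigr\},
\]
where $U=(\CC^n\times\CC^n)\setminus\Delta$. Project $I$ onto the last three factors: the fiber over $(x,y,\alpha,\beta)$ is cut out of $V$ by $2c$ affine conditions from $f_i(x)=0,\ f_i(y)=0$ together with $2n$ affine conditions from the componentwise equations $\sum_i\alpha_i\partial_kf_i(x)=(x-y)_k$ and $\sum_i\beta_i\partial_kf_i(y)=(x-y)_k$ for $k=1,\ldots,n$. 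The key step is to verify that, for $(x,y)\in U$ and $(\alpha,\beta)\neq 0$, these $2c+2n$ linear functionals on $V$ are linearly independent, so the generic fiber has pure codimension $2c+2n$ in $V$. Combined with the fact that $U\times\CC^c\times\CC^c$ has dimension $2n+2c$, this gives $\dim I\leq\dim V$.

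Now project $I\to V$. If the image is a proper subvariety then a generic $f$ has empty bottleneck locus, which is trivially finite. Otherwise the map is dominant, so its generic fiber is zero-dimensional; uniqueness of the Lagrange multipliers at smooth points shows that the projection $I\to V\times U$ restricts, over this fiber, to a bijection onto the bottleneck locus of $X_f$, which is therefore finite for generic $f$. The main obstacle is the linear-independence step: it reduces to the classical jet-separation fact that values and first derivatives of polynomials of degree $\geq 1$ can be independently prescribed at two distinct points of $\CC^n$, together with the observation that the degenerate cases $\alpha=0$ or $\beta=0$ force $x-y=0$ and are automatically excluded by restricting to $U$.
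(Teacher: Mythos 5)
Your overall strategy—an incidence variety over the parameter space, a fiber-dimension count for the second projection, and then a projection back to parameter space—is the same as the paper's, though you work with explicit Lagrange multipliers where the paper instead encodes the normality condition via rank-drop of the augmented Jacobians and avoids ever introducing $\alpha,\beta$.

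The proposal has a genuine gap at the step you yourself flag as ``the key step.'' The claimed ``classical jet-separation fact'' — that values and first derivatives can be independently prescribed at two distinct points — is false in exactly the degree range where the theorem is delicate. For any quadratic polynomial $f$ and any $x,y$, the Taylor expansion gives the exact identity
\[
f(x)-f(y)=\tfrac12\,(\nabla f(x)+\nabla f(y))\cdot(x-y),
\]
so the $2n+2$ functionals $f\mapsto f(x),\,f\mapsto f(y),\,f\mapsto\partial_kf(x),\,f\mapsto\partial_kf(y)$ satisfy a nontrivial linear relation on the space of quadrics (and more generally each degree-$2$ factor $f_i$ contributes such a relation). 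Thus the $2c+2n$ conditions cutting out your fiber are \emph{not} independent whenever some $d_i=2$; your parameter count overshoots by at least one. Moreover the relation does not merely lower the rank uniformly: feeding the inhomogeneous right-hand sides of the Lagrange equations into it produces the consistency condition
\[
(x-y)^{T}(x-y)\cdot\Bigl(\tfrac{1}{\alpha_1}+\tfrac{1}{\beta_1}\Bigr)=0,
\]
so the analysis bifurcates according to whether the pair $(x,y)$ is isotropic, i.e.\ $(x-y)^T(x-y)=0$ (a locus that exists only over $\CC$, and which you never mention). This is precisely the dichotomy the paper isolates: after reducing to two orbits of a group action on pairs, it proves in Lemma~\ref{lem:orbit} that the non-isotropic fiber has codimension $2n$, and in Lemma~\ref{lem:orbit2} that the isotropic fiber has codimension $2n$ except in the $c=1$, degree-$2$ case where it drops to $2n-1$; Theorem~\ref{thm:finite-bn} then closes the argument by noting the isotropic locus itself has dimension only $2n-1$. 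Your proof needs this case analysis (or at minimum must restrict to all $d_i\ge 3$ and separately handle quadric factors), because as written the linear-independence claim that carries the whole dimension count is simply not true in the cases that matter.

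Two smaller points: degree $\ge 1$ is not enough even away from the quadric issue — for degree-$1$ factors the coefficient space is too small and the paper explicitly reduces to $d_i\ge2$ by slicing away linear equations — and you should note that the rank-drop formulation and your Lagrange formulation are not literally the same variety; your $I$ is a cover of the paper's $\mathcal{B}$ and the generic bijectivity you invoke at the end deserves a sentence of justification since it uses smoothness of $X_f$.
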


\begin{figure}[!ht]
	\centering
	\subfloat[][Planar curve\label{fig:planar-curve-intro}]{
		\includegraphics[trim={0cm 0cm 0cm 0cm},clip,scale=0.25]
		{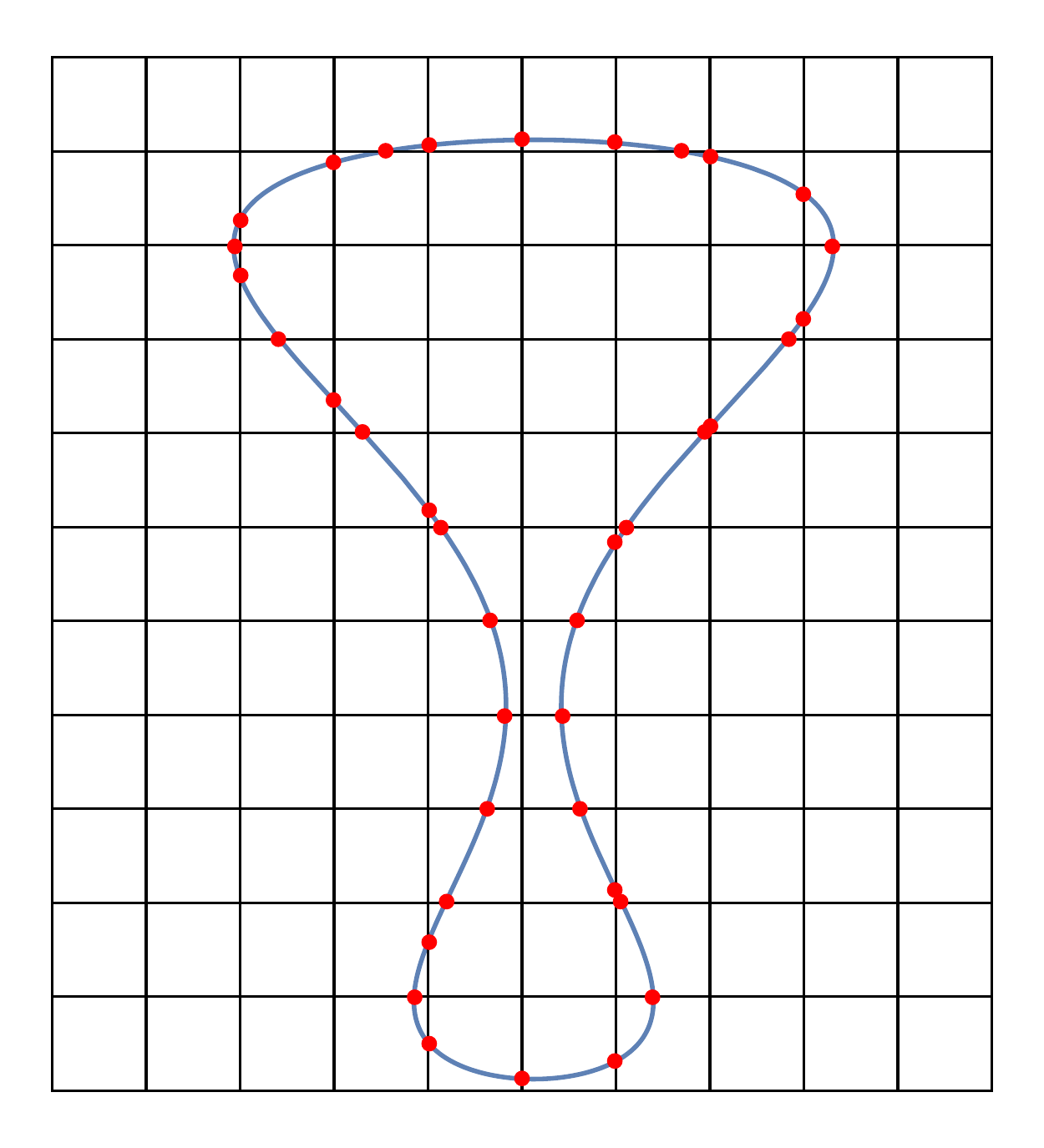}}
	\qquad
	\subfloat[][A quartic surface in $\RR^4$\label{fig:surface}]{
		\includegraphics[trim={2cm 3cm 2cm 3cm},clip,scale=0.1]
		{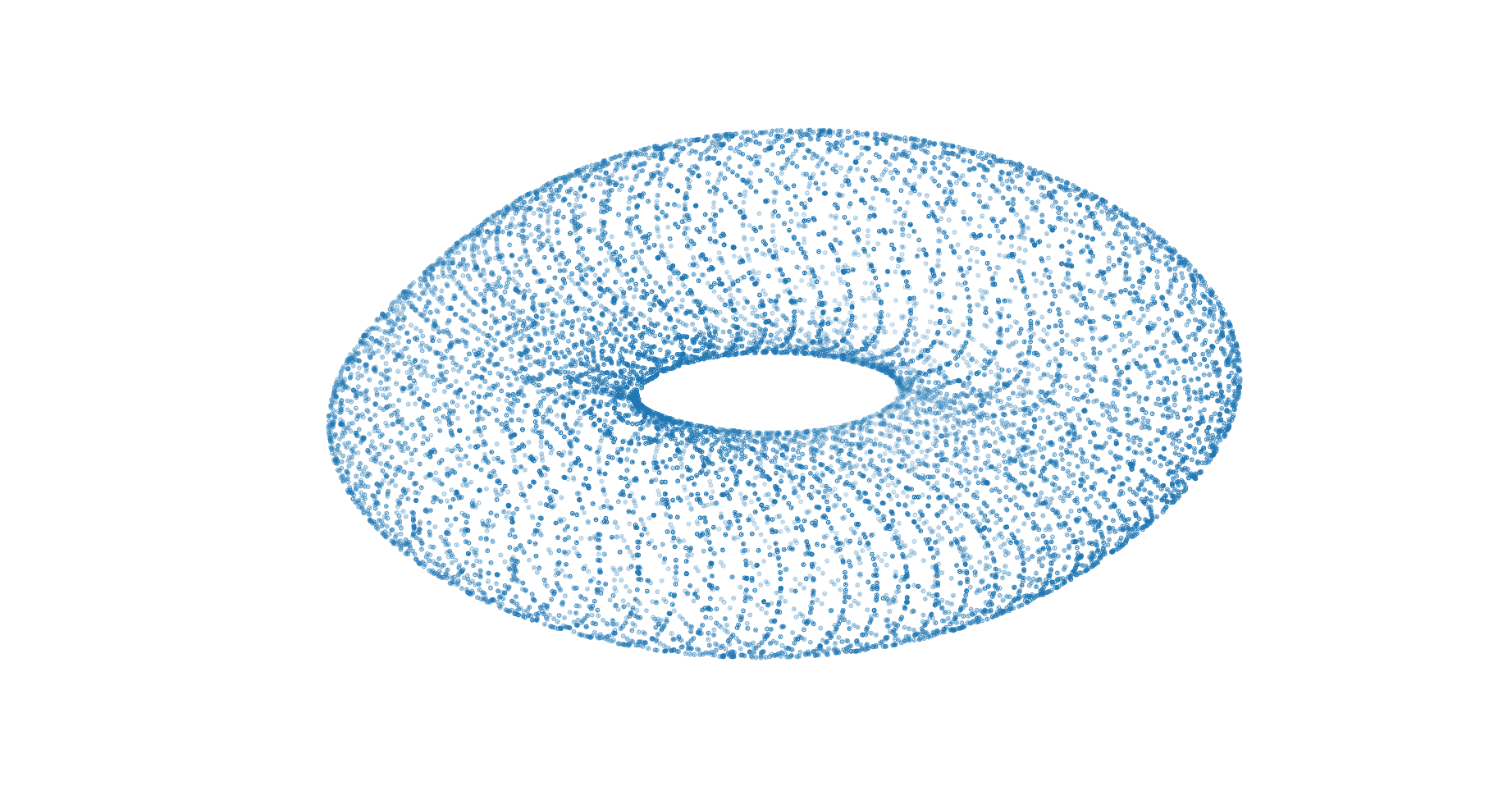}}
\end{figure}

The density guarantee is achieved by introducing extra sample points
given by ad hoc slicing and nearest point computations (see Section
\ref{sec:extra}). Figure \ref{fig:surface} shows an $\epsilon$-sample
of a surface in $\RR^4$ with $\epsilon=0.1$.

Specifically we prove that:

\begin{thm*}\ref{thm:grid}
  Let $b_2$ be the radius of the narrowest bottleneck of $X$ and let
  $\epsilon > 0$. If $0<\delta \sqrt{n} < \min\{\epsilon, 2b_2\}$, then
  $S_{\delta}$ is an $\epsilon$-sample of $X$.
\end{thm*}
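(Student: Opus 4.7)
The plan is to show that every $x \in X$ lies within $\epsilon$ of some $e \in S_\delta$. Overlay the cubical grid on $\RR^n$ with vertex set $\delta\ZZ^n$; its cells have diameter $\delta\sqrt{n}$, so the grid vertex $v$ nearest to any $x\in\RR^n$ satisfies $\|x - v\| \leq \delta\sqrt{n}/2$. In particular, for $x \in X$, the hypothesis $\delta\sqrt{n} < 2b_2$ gives $\|x - v\| < b_2$, placing $v$ inside the bottleneck neighborhood of $X$.

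The sample $S_\delta$ (see Section \ref{sec:extra}) combines the grid-slice intersections $X \cap L$, over affine subspaces $L$ of complementary dimension anchored at the grid, with extra sample points produced by a local slicing and nearest-point routine at every grid vertex lying sufficiently close to $X$. The essential claim one has to extract from the construction is that, whenever a grid vertex $v$ satisfies $d(v,X)<b_2$, the extra-sample procedure places into $S_\delta$ a genuine closest point $e\in X$ of $v$. Granting this, the inequality $\|v-e\|\leq \|v-x\|$ (because $x$ is a competitor in the nearest-point minimization) combines with the triangle inequality to give
\[
\|x-e\|\;\leq\;\|x-v\|+\|v-e\|\;\leq\;2\cdot\tfrac{\delta\sqrt{n}}{2}\;=\;\delta\sqrt{n}\;<\;\epsilon,
\]
with $e\in S_\delta$, as required.

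The main obstacle is verifying this extracted claim: that the nearest-point projection $\pi(v)$ is well-defined (and returned by the extra-sample routine) under the assumption $d(v,X)<b_2$. This is precisely what the bottleneck radius is tailored to control, since $b_2$ is the scale at which $X$ can first admit globally competing normal directions from an external observation point. The condition $\delta\sqrt{n} < 2b_2$ in the theorem is therefore exactly the condition needed to keep every grid vertex relevant to a given $x\in X$ inside the well-posed nearest-point neighborhood of $X$. Establishing this uniformly across grid vertices, so that $S_\delta$ in fact contains a point of $X$ near every $x\in X$, is the crux of the argument; the density bound is then the one-line triangle-inequality calculation above.
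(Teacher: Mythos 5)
Your proposal misidentifies the extra sample. In the paper, $E'_\delta$ is \emph{not} a collection of nearest-point projections of grid vertices onto $X$. It is built by slicing $X$ with affine coordinate planes $\pi_t^{-1}(g)$ of each codimension $k=1,\dots,d-1$ and, for each slice, computing the normal locus $I(X\cap \pi_t^{-1}(g), q)$ with respect to a \emph{single fixed generic point} $q$. The only property this construction guarantees, and the only one the paper uses, is that the extra sample contains at least one point on \emph{every connected component of every slice}. There is no nearest-point routine anchored at grid vertices, so the inequality $\|v-e\|\leq\|v-x\|$ has nothing to hold onto, and the whole triangle-inequality chain collapses.

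There is a second, independent error: even if a nearest-point computation were performed, $d(v,X)<b_2$ would not guarantee a well-defined (unique) nearest point. That would require $d(v,X)<\tau_X$, and $\tau_X=\min\{1/\rho,b_2\}$ can be strictly smaller than $b_2$ due to curvature. The bottleneck radius controls a global separation phenomenon, not the local medial-axis distance. In the paper's proof, the hypothesis $\delta\sqrt{n}<2b_2$ plays a completely different role: it ensures that a hypercube $\pi$ of the tessellation containing a given $p\in X$ cannot contain an entire connected component of $X$ (otherwise the diameter-realizing pair of that component would be a bottleneck of radius $<b_2$). Therefore $X$ must cross the boundary of $\pi$. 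The proof then takes the lowest-dimensional face $F$ of $\pi$ meeting $X$: if $\dim F\leq n-d$, a basic sample point lies in $\pi$; if $\dim F>n-d$, then $X\cap F$ has a component trapped inside $F$, and that face is one of the slices used to build $E'_\delta$, so the extra sample contributes a point in $\pi$. Since $\operatorname{diam}(\pi)=\delta\sqrt{n}<\epsilon$, that sample point is within $\epsilon$ of $p$. Your argument omits this face-by-face analysis entirely, which is the actual core of the theorem.
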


\secref{sec:homology} is devoted to homology computations from a
given sample. Given an $\epsilon$-sample of $X$ it is straightforward
to compute the homology groups of $X$ assuming that $\epsilon$ is
small enough. In \cite{BCL17} it is shown that the homology groups
of $X$ can be recovered from the sample if $\epsilon < \tau_X$,
where $\tau_X$ is the \emph{reach} of $X$ (defined in
\secref{sec:edd}). The reach and bottlenecks are geometric invariants
of $X$ specific to its embedding in $\RR^n$. They contribute to estimates on  how dense we have to sample $X$ in order to
be able to recover its shape. In this paper we show that the density
of the sample needed to compute the zeroth and first homology groups
of $X$ is controlled by the radii of the \textit{generalized} bottlenecks (see Section \ref{sec:bottlenecks}). The radius of the narrowest generalized bottleneck is equivalent to the \textit{weak feature size (wfs)} introduced in \cite{10.1016/j.gmod.2005.01.002}. One of the main contributions of this paper is the following
result:
\begin{thm*}\ref{thm:bn_hom}.
	Let $\epsilon < wfs(X)$ and let $i\in \{0, 1\}$. Let $E$ be an $\epsilon$-dense sample of $X$ and let $S$ be the modified Vietoris-Rips complex constructed from $E$ as in Section \ref{sec:modified}. Then, $$H_i(S) \cong H_i(X)$$
\end{thm*}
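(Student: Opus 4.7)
The plan is to construct, for $i\in\{0,1\}$, a chain of isomorphisms
$H_i(S)\cong H_i(\check{C})\cong H_i(U)\cong H_i(X^r)\cong H_i(X)$,
where $U=\bigcup_{e\in E}B(e,r)$ is the union of closed balls of radius $r$ (for a suitable $r$ with $\epsilon<r<wfs(X)$, which exists by hypothesis), $\check{C}$ is the \v{C}ech complex $\check{C}_r(E)$, and $X^r=\{y\in\RR^n:d_X(y)\le r\}$ is the closed $r$-tubular neighbourhood of $X$.

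First I would record that $wfs(X)$ controls the topology of offsets. By definition (and by the identification with the smallest generalised bottleneck radius), $wfs(X)$ is the smallest positive critical value of the distance function $d_X$. Since $d_X$ has no positive critical value in $(0,r]$, the isotopy lemma for distance functions (Grove; Chazal--Lieutier) yields that $X^r$ deformation retracts onto $X$, so $H_i(X^r)\cong H_i(X)$ in every degree. Combining this with the $\epsilon$-density sandwich
$X\subset U\subset X^{r}$
and composing each inclusion with the retraction $X^r\to X$, one obtains $H_i(U)\cong H_i(X)$ for all $i$. The nerve lemma, applied to the convex cover $\{B(e,r)\}_{e\in E}$ (whose finite intersections are convex and hence contractible when non-empty), delivers $H_i(U)\cong H_i(\check{C})$.

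The remaining task is to identify $H_i(S)$ with $H_i(\check{C})$ for $i\in\{0,1\}$. The standard interleaving $\mathrm{VR}_r(E)\subset\check{C}_r(E)\subset\mathrm{VR}_{2r}(E)$ is by itself too loose to force an isomorphism on $H_1$, because spurious 1-cycles can appear in $\mathrm{VR}_{2r}$ that do not bound in the unmodified 2-skeleton. The modification of Section~\ref{sec:modified} is engineered precisely to close this gap: it uses the sampled geometric data, together with the bound $\epsilon<wfs(X)$ that caps the diameter of any spurious loop, to decide which edges and 2-simplices to retain so that the induced map on $H_0$ and $H_1$ between $S$ and $\check{C}$ is bijective.

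The main obstacle is this last step. The restriction to $i\in\{0,1\}$ is essential: $wfs(X)$, unlike the reach $\tau_X$, does not preclude small critical values from appearing above dimension~1 (a thin handle on $X^r$ may remain invisible to $H_0$ and $H_1$ but alter $H_2$), so any argument must exploit a low-degree feature, typically that a 1-cycle in $\mathrm{VR}$ is homologous to one built from sufficiently short edges which, by the nerve and wfs bounds, already bounds in $U$. The hard verification is therefore that the specific filtering used to define $S$ realises this cycle-collapsing precisely at degrees 0 and 1, with no loss of information in either direction.
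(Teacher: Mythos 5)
Your outline identifies the right pieces of surrounding machinery (offsets controlled by $wfs$, the nerve lemma for the ball cover) and you are also right to flag the genuine obstacle: relating the modified complex $S$ to a \v{C}ech complex. But as written you do not cross that obstacle — you only announce that the modification of Section~\ref{sec:modified} "is engineered precisely to close this gap" without any argument. That missing argument is the entire content of the theorem, so the proposal has a gap rather than a proof.

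There are two further issues. First, the sandwich $X\subset U\subset X^{r}$ together with a deformation retraction $X^{r}\to X$ does \emph{not} by itself give $H_i(U)\cong H_i(X)$: a subspace pinched between a space and a deformation retract of it can have arbitrary homology (e.g.\ a figure-eight inside a disk). Getting $U\simeq X$ requires showing the retraction restricts to $U$, which is an argument of Niyogi--Smale--Weinberger type and is exactly what the paper carries out separately in its reach-based Theorem~\ref{homology}; the $wfs$-based theorem avoids that route. Second, your worry that "spurious 1-cycles can appear in $\mathrm{VR}_{2r}$" is precisely the issue — the paper does not pass through a \v{C}ech complex at all. Instead it defines a notion of a \emph{good} edge (one whose enclosing ball meets $X$ in a single component), proves Lemma~\ref{lem:hom} that every edge of $S$ is homotopic to a path of good edges (using the $\sqrt{8}\epsilon$ condition to manufacture 2-simplices that push a bad edge toward strictly shorter good ones, terminating by finiteness of $E$), and then argues directly that $1$-cycles in $S$ can be pushed onto $X$ while non-contractible cycles of $X$ stay non-contractible because $U(E,\epsilon)\subseteq X_\epsilon$ and $\epsilon<wfs(X)$. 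To complete your proposal you would need to supply an analogue of Lemma~\ref{lem:hom} (or some comparable edge-collapsing argument) showing that $S$ computes the same $H_0,H_1$ as the \v{C}ech complex; without that, the chain of isomorphisms is broken at its first link.
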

Computing the bottlenecks is in general much easier than computing the reach. Therefore, in cases where the radius of the narrowest bottleneck is equal to the wfs we obtain sparser samples and considerable computational improvements for varieties with high curvature. We believe it to be true that the narrowest bottleneck equals the wfs for generic varieties, but this needs further investigation. In \exref{ex:sparse} we provide a comparison with existing algorithms in \cite{BCL17}.

For the higher homology
groups we provide the following result saying that we can bound the reach
from below using estimates of the \emph{local reach} $\tau_X(p)$ (see Section \ref{sec:edd}) for
sample points $p \in X$. The homology of $X$ can then be computed from
the \u{C}ech complex, $C(E, \epsilon)$, given by the nerve of the
union of all closed $\epsilon$-balls $\bar{B}_e(\epsilon)$ for
each $e\in E$.
\begin{thm*}\ref{homology}.
If $E \subset X$, $\epsilon > 0$, $E$ is an $(\epsilon/2)$-sample and
$\epsilon < \frac{4}{5}\tau_X(e)$ for all $e \in E$, then $X$ is a
deformation retract of $C(E, \epsilon)$. In particular, $X$ and $C(E,
\epsilon)$ have the same homology groups.
\end{thm*}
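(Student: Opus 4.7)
The plan is to reduce Theorem \ref{homology} to a controlled nearest-point deformation retraction, in the spirit of Niyogi--Smale--Weinberger, with the reach replaced by its local version $\tau_X(e)$ at each sample point. Set $U := \bigcup_{e \in E} \bar{B}_e(\epsilon) \subset \RR^n$. Since each ball is convex, every nonempty finite intersection in the cover $\{\bar{B}_e(\epsilon)\}_{e \in E}$ of $U$ is contractible, so the Nerve Theorem yields a homotopy equivalence $C(E, \epsilon) \simeq U$. It therefore suffices to exhibit $X$ as a deformation retract of $U$.

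Next I would verify that the nearest-point projection $\pi_X \colon U \to X$ is well-defined and continuous. Given $x \in U$, choose $e \in E$ with $\|x - e\| \leq \epsilon < \tfrac{4}{5}\tau_X(e)$; this places $x$ strictly inside the local tubular neighborhood of $X$ at $e$ on which $\pi_X$ is single-valued and smooth. In particular $\|x - \pi_X(x)\| \leq \|x - e\| \leq \epsilon$, since $e \in X$. The candidate deformation is the straight-line homotopy $H(x, t) = (1 - t)\, x + t\, \pi_X(x)$, which is the identity at $t = 0$, agrees with $\pi_X$ at $t = 1$, and fixes $X$ pointwise.

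The main obstacle is showing $H(x, t) \in U$ for every $t \in [0, 1]$. For $t \geq \tfrac{1}{2}$ this is straightforward: by $(\epsilon/2)$-density there is some $e' \in E$ with $\|\pi_X(x) - e'\| \leq \epsilon/2$, and the decomposition $H(x, t) - e' = (1 - t)(x - \pi_X(x)) + (\pi_X(x) - e')$ yields $\|H(x, t) - e'\| \leq (1 - t)\epsilon + \epsilon/2 \leq \epsilon$, so $H(x, t) \in \bar{B}_{e'}(\epsilon) \subseteq U$. The delicate range is $t \in [0, \tfrac{1}{2}]$, where the trajectory must be covered by a sample point near $x$; the naive triangle inequality only gives $\|H(x, t) - e\| \leq (1 + t)\epsilon$, which is too weak. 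Here I would invoke a Federer-type reach estimate: writing $x - e = v + w$ with $v \in T_e X$ and $w \in N_e X$, the bound $\epsilon < \tfrac{4}{5}\tau_X(e)$ forces $\pi_X(x) - e$ to lie nearly along a tangent direction, with normal deviation at most $\|e - \pi_X(x)\|^2 / (2\tau_X(e))$. Feeding this into the expansion of $\|(1 - t)(x - e) + t(\pi_X(x) - e)\|^2$ and using $\epsilon / \tau_X(e) < 4/5$ produces the estimate $\|H(x, t) - e\| \leq \epsilon$ on this half of the interval, and the two bounds match up continuously at $t = \tfrac{1}{2}$. The constant $4/5$ is precisely what makes this pinch close; a weaker bound would leave a gap in a thin annulus where the trajectory could escape $U$.
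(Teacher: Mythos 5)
Your overall strategy is sound and matches the paper's in its broad outline: reduce to the union of balls via the Nerve Theorem, use the nearest-point projection $\pi_X$ (well defined because $\epsilon < \tau_X(e)$), and show that the straight-line homotopy $H(x,t) = (1-t)x + t\pi_X(x)$ stays inside the union. Your handling of $t \geq \tfrac{1}{2}$ is also correct: from the $(\epsilon/2)$-density and $\|x-\pi_X(x)\|\leq\epsilon$ the triangle inequality gives $\|H(x,t)-e'\| \leq (1-t)\epsilon + \epsilon/2 \leq \epsilon$.

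The gap is in the claim that $H(x,t) \in B_e(\epsilon)$ for all $t \in [0,\tfrac12]$. This is false in general. Take $X$ to be a circle of radius $R$ centered at $(0,R)$ in $\RR^2$, so $e := (0,0) \in X$ and $\tau_X(e) = R$; take $\epsilon = 0.79R$, and let $x = 0.78R(\cos(0.5),\sin(0.5))$, so $\|x-e\| = 0.78R < \epsilon$. Then $y := \pi_X(x)$ lies on the circle with $\|x-y\|\approx 0.07R$, but $\langle x-e,\, y-x\rangle > 0$ and $\|y-e\| \approx 0.81R > \epsilon$: the trajectory $H(x,t)$ leaves $B_e(\epsilon)$ at $t^* \approx 0.41 < \tfrac12$. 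No Federer-type curvature estimate can repair this, because the issue is not that $X$ bends too sharply --- the problem is simply that $\pi_X(x)$ can lie outside $B_e(\epsilon)$ and the chord from $x$ to $\pi_X(x)$ can cross $\partial B_e(\epsilon)$ before midway. Your fixed split at $t=\tfrac12$ leaves a window $(t^*,\tfrac12)$ in which you have exhibited no covering ball, so the deformation is not shown to stay in $U$.

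The paper avoids this by splitting at the actual exit point $p$ of the line from $x$ to $\pi_X(x)$ through $\partial B_e(\epsilon)$ rather than at an a priori parameter value. The crux is then the estimate $\|p - \pi_X(x)\| < \epsilon/2$: once this is known, the $(\epsilon/2)$-density gives $e' \in E$ with both $p$ and $\pi_X(x)$ in $B_{e'}(\epsilon)$, so $[x,p) \subset B_e(\epsilon)$ and $[p,\pi_X(x)] \subset B_{e'}(\epsilon)$ with no gap. To prove $\|p-\pi_X(x)\|<\epsilon/2$ the paper does not use a tangent/normal decomposition at $e$; it considers the ray from $y:=\pi_X(x)$ through $x$, shows via Federer's Theorem 4.8(6) that this ray meets the medial axis at a point $z$ with $\|e-z\|\geq\tau_X(e) > \tfrac54\epsilon$, notes that the angle of the triangle $e,p,z$ at $p$ is non-obtuse, and applies the law of cosines to reach a contradiction if $\|p-y\|\geq\epsilon/2$. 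That is where the constant $4/5$ enters. In your sketch the constant $4/5$ is attached to the wrong inequality, and the estimate it is supposed to ``pinch close'' can in fact fail.
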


As estimating the reach of a manifold is computationally a very challenging problem, our approach provides an important simplification. The estimate is based on a finite process i.e. computing the local reach at finetly many sample points. It
is an interesting problem to further investigate the implications of
local approaches like this for the computational complexity of
homology groups. Roughly speaking we expect the size of a sample of
the ambient space $\RR^n$ to be exponential in $n$ but a sample of $X$ to be
exponential in $\dim{X}$.

\subsection*{Acknowledgements.} The authors would like to thank Parker Edwards for interesting discussions and Sascha Timme for helping with the implementation, which uses the HomotopyContinuation.jl \cite{HomotopyContinuation.jl} software package. Mathematica \cite{WolframMathematica} and Matplotlib \cite{matplotlib}
were used for creating images. The third author was supported by the Thematic Einstein Semester \textit{''Varieties, Polyhedra, Computation''} by the Berlin Einstein Foundation. All three authors acknowledge generous support by Vetenstapsr{\aa}det grants [NT:2014-4763], [NT:2018-03688].

\section{Geometrical background} \label{sec:edd}

Let $X \subseteq \CC^n$ be a smooth subvariety and let $q \in
\RR^n$. For $x \in X$, $N_xX \subseteq \CC^n$ denotes the embedded
normal space to $X$ at $x$. Consider the set of critical points with
respect to the squared Euclidean distance function from $X$ to
$q$: \[I(X,q)=\{x \in X: q \in N_xX\}.\] We call $I(X,q)$ the
\emph{normal locus} of $X$ with respect to $q$. Consider the real part
$X_{\RR} \subset \RR^n$ of $X$. We will use the notation
$I(X_{\RR},q)$ to denote the real points in $I(X,q)$.

The degree of $I(X,q)$ for generic $q \in \CC^n$ is called the
Euclidean Distance Degree (EDD) of $X$. If $X$ is a complete
intersection defined by $f_1,\dots,f_c \in \CC[x_1,\dots,x_n]$, then
$I(X,q)$ may be computed via the following system of equations:
\begin{align} \label{eq:edd}
  f_1(x)= \ldots =f_c(x) = 0 \\
  x-q = \sum_{i=1}^c \lambda_i \nabla f_i(x),
\end{align}
where $\lambda_i$ are auxiliary variables and $\nabla f_i$ denotes the
gradient of $f_i$. To compute $I(X,q)$, in case it is finite, one can
for instance use numerical homotopy methods \cite{bertini, SW05,
  BHSW13}. Note that the system \eqref{eq:edd} is linear in
$\lambda_i$ and so it is advantageous to use a multihomogeneous
homotopy to solve it.

Consider a smooth variety $X_{\CC} \subset \CC^n$ and its real part $X
\subseteq \RR^n$ and let $q \in \RR^n \setminus X$. One reason that
$I(X,q)$ is interesting is that it contains at least one point on
every connected component of $X$, namely the point closest to
$q$. This will be useful in the sampling algorithm presented in
\secref{sec:sampling}.

Below we will need to find a bounding box of a non-empty compact
smooth variety $X \subset \RR^n$. One way to do this is to compute the
normal locus $I(X,q)=\{x \in X:q \in \N{x}{X}\}$ with respect to a
random point $q \in \RR^n$. From $I(X,q)$ we get enough information to
find a bounding box or a bounding ball of $X$, that is a box or ball
that contains $X$. Let $\bar{B}_q(r) \subset \RR^n$ be the closed
ball of radius $r$ centered at $q \in \RR^n$. Since $X$ is compact,
for any $q \in \RR^n$ there is a point $x \in I(X,q)$ that maximizes
the distance from $q$ to points of $X$.  Hence $X \subset
\bar{B}_q(||x-q||)$. The solid hypercube with side length
$2||x-q||$ centered at $q$ is then a bounding box for $X$.

As above let $X \subset \RR^n$ be a compact smooth variety. A very
important invariant of the embedding $X \subset \RR^n$ of a variety is the \emph{reach} of $X$. For $p \in \RR^n$, let $\pi_X(p)$
denote the set of points $x \in X$ such that $||x-p||=\min_{x' \in X}
||x'-p||$. Also, for $r \geq 0$ let $X_r$ denote the tubular
neighborhood of $X$ of radius $r$, that is the set of points $x \in
\RR^n$ at distance less than $r$ from $X$. Then the reach $\tau_X$ of
$X$ may be defined as $\sup \{r \geq 0: |\pi_X(p)|=1\textrm{ for all }
p \in X_r \}$. For a point $q \in X$ we have a local notion called the
local reach, or the \emph{local feature size}, which is defined as
$\tau_X(q)=\sup \{r \geq 0: |\pi_X(p)|=1\textrm{ for all } p \in
B_q(r) \}$ where $B_q(r)=\{p \in \RR^n: ||p-q|| < r\}$. Then $\tau_X =
\inf_{q \in X} \tau_X(q)$. Since $X$ is compact the reach is positive
and if we in addition assume that $\dim{X}>0$, $X$ is not convex and
therefore the reach of $X$ is finite. The reach can be computed as a
minimum of two quantities \cite{ACKMRW17}, one of which derives from
local considerations in terms of the curvature of $X$ and the other
one is a global quantity arising from the bottlenecks of $X$. More
precisely, the global quantity is the radius of the narrowest
bottleneck of $X$. The local quantity, which we call $\rho$, is the
minimal radius of curvature on $X$, where the radius of curvature at a
point $x \in X$ is the reciprocal of the maximal curvature of a
geodesic passing through $x$. The global quantity is $\frac{1}{2}
\inf\{||x-y||:(x,y) \in X\times X, \, x\neq y, \, y \in \N{x}{X}, \,
x\in \N{y}{X}\}$ where $N_pX \subset \RR^n$ denotes the normal space
of $X$ at a point $p \in X$.

Let $\Delta_X = \{p \in \RR^n: |\pi_X(p)|>1\}$ and define the
\emph{medial axis} $M_X$ as the closure of $\Delta_X$. Let $U_X =
\RR^n \setminus M_X$ and note that the projection $\pi_X$ is a well
defined map $\pi_X:U_X \rightarrow X$ which maps a point $p$ to the
unique closest point of $X$. Also, the local reach $\tau_X(q)$ is the
distance from $q \in X$ to the medial axis and the reach $\tau_X$ is
the distance from $X$ to the medial axis.

\section{Bottlenecks and Reach} \label{sec:bottlenecks}

\subsection{Weak feature size}
We will first define a generalized version of bottlenecks inspired by the construction in \cite{10.1016/j.gmod.2005.01.002}. Let $X \subset \CC^n$ be a smooth variety. For a set of points $x_1, \dots, x_k \in X$ to define a bottleneck we first require there to be a point $y\in \CC^n$ such that the Euclidean distance from $y$ to $x_i$ is the same for each $0\leq i\leq k$ and that $y$ is in the normal space $N_X(x_i)$ for each $0\leq i\leq k$. Moreover, we would like $y$ to be \textit{critical} in some sense. It turns out that a good notion of critical is that $y$ is in the \textit{convex hull} of $x_1, \dots, x_k$. This means that there exists real non-negative numbers $a_1, \dots, a_k$ such that $\sum_{i=1}^k a_i = 1$ and $y = \sum_{i=1}^k a_i x_i$. We refer to \cite{10.1016/j.gmod.2005.01.002} for more details. To formalize this idea we make the following definitions.
\begin{defn}\label{def:bn_locus}
	Let $C \coloneqq \{(y, r) \in \CC^n \times \CC \mid ||x_i-y||^2 - r = 0, \text{ for $1\leq i\leq k$}\}$ and denote the closure of the projection $C \to \CC^n$ by $\rho(x_1, \dots, x_k)$. Then $\rho(x_1, \dots, x_k)$ is the set of centers of all $(n-1)$-spheres passing through the points $x_1, \dots, x_k$. Next, let $Conv(x_1, \dots, x_k)$ denote the convex hull of the points $x_1, \dots, x_k$. Define the $k$th \textit{bottleneck locus} to be the following set:
	$$ B_k \coloneqq \{ (x_1, \dots, x_k)\in \CC^{n\times k}\setminus \Delta \mid \cap_{i=1}^k N_X(x_i) \cap \rho(x_1, \dots, x_k) \cap Conv(x_1, \dots, x_k) \neq \emptyset \}$$
	where $\Delta \coloneqq \{ (x_1, \dots, x_k) \mid x_i=x_j \text{ for some $i, j$} \} \subset \CC^{n\times k}$.
\end{defn}
Note that $B_1=X$ and $B_2$ is what is usually defined as the bottleneck locus. In the case $k=2$ the conditions in the above definition simplify to algebraic conditions which means that $B_2$ is an algebraic variety in $\Bbb C^{2n}$. For $k>2$ the set $B_k$ is not algebraic but if we restrict $B_k$ to only allow real tuples $(x_1, \dots, x_k) \in \Bbb R^{n\times k}$ it is a semi-algebraic set. 

\begin{rem}
	In general, $B_k$ is empty for $k> EDD(X)$ since there is no point in $\CC^n$ which is in the normal space of more than $EDD(X)$ number of points on $X$. The only case for which $B_k$ is not empty for $k> EDD(X)$ is if there is a point $q\in \Bbb C^n$ which is in the normal space of infinitely many points on $X$. This is the case for instance if $X$ contains a component which is an $m$-sphere for some $m\geq 1$. In this case $B_k$ is infinite for all $k\in \Bbb N$. The sets $B_k$ for $k>EDD(X)$ don't contribute to new information for our purposes in this case and we will therefore not consider them.  
\end{rem} 

By restricting the sets $B_k$ to only allow tuples in $\Bbb R^n$ we can define a notion of \textit{narrowest} bottleneck. Let $B^{\Bbb R}_k$ denote this restriction.
\begin{defn}\label{def:wfs}
	Let $\ell_k \colon B^{\Bbb R}_k \to \RR$ be the function that takes $(x_1, \dots, x_k)$ to the radius of the smallest $(n-1)$-sphere satisfying the condition in Definition \ref{def:bn_locus}. Define $b_k$ to be the minimal value (or infimum in case $B^{\Bbb R}_k$ is not finite) of the image of $\ell_k$, called the \textit{radius of the narrowest bottleneck of degree $k$}. We now define the weak feature size as the following quantity:
	$$wfs(X) \coloneqq \underset{2\leq k \leq EDD(X)}{min} b_k$$
\end{defn}
For the remainder of this paper we will refer to elements of $B_2$ as the bottlenecks of $X$ and elements of $B_k$ for $k>2$ as generalized bottlenecks of degree $k$. The narrowest bottleneck of $X$ thus refers to the narrowest bottleneck of degree 2 of $X$.

Computing the generalized bottlenecks of $X$ is in general much harder than computing $b_2$. It is therefore interesting to understand in which cases the equality $wfs(X) = b_2$ holds. 

\begin{ex}
	Let $f_1(x, y) = 3x^2y - y^3 + 2(x^2+y^2)(x^2 + y^2 - 1)$ and $f_2(x, y) = 10x^2y^3 - 5x^4y - y^5+ 5(x^2+y^2)^2(x^2 + y^2 - 1)$. The curves $\{ f_1=0 \}$ and $\{ f_2=0 \}$ are two examples of varieties in $\Bbb R^2$ for which the weak feature size, $wfs(X)$, is smaller than the narrowest bottleneck $b_2$.
	\begin{figure}[ht] \vspace{-10pt}
	\centering
	\subfloat[third][Illustration of the $wfs$ and $b_2$ of the curve $\{f_1=0\}$.\label{fig:wfs_bn_3_lines}]{
		\includegraphics[trim={4.5cm 1cm 4.5cm 0cm},clip,width=0.45\linewidth]
		{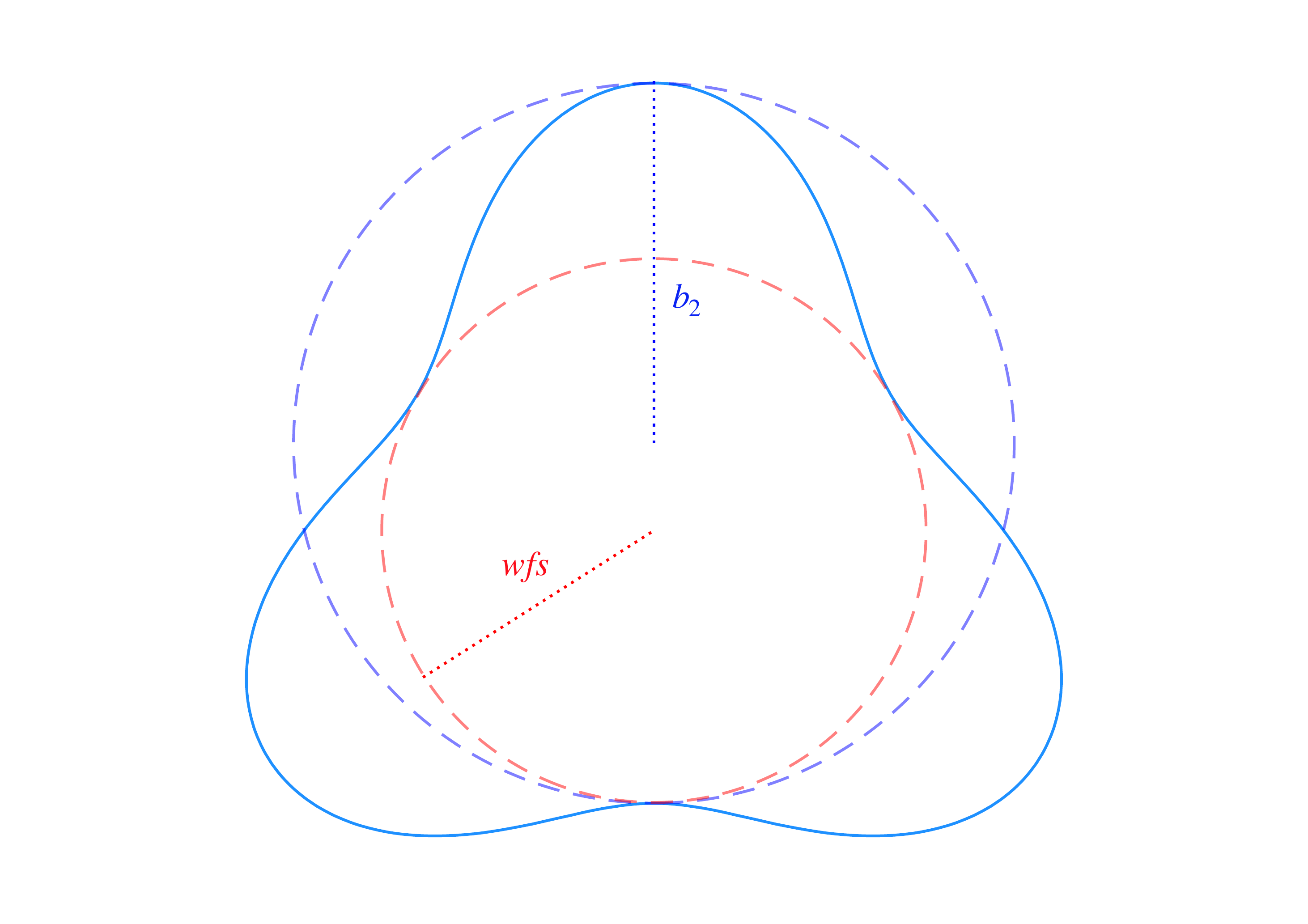}}
	\;
	\subfloat[fourth][Illustration of the $wfs$ and $b_2$ of the curve $\{f_2=0\}$.\label{fig:wfs_bn_5_lines}]{
		\includegraphics[trim={4cm 0cm 4cm 0cm},clip,width=0.45\linewidth]
		{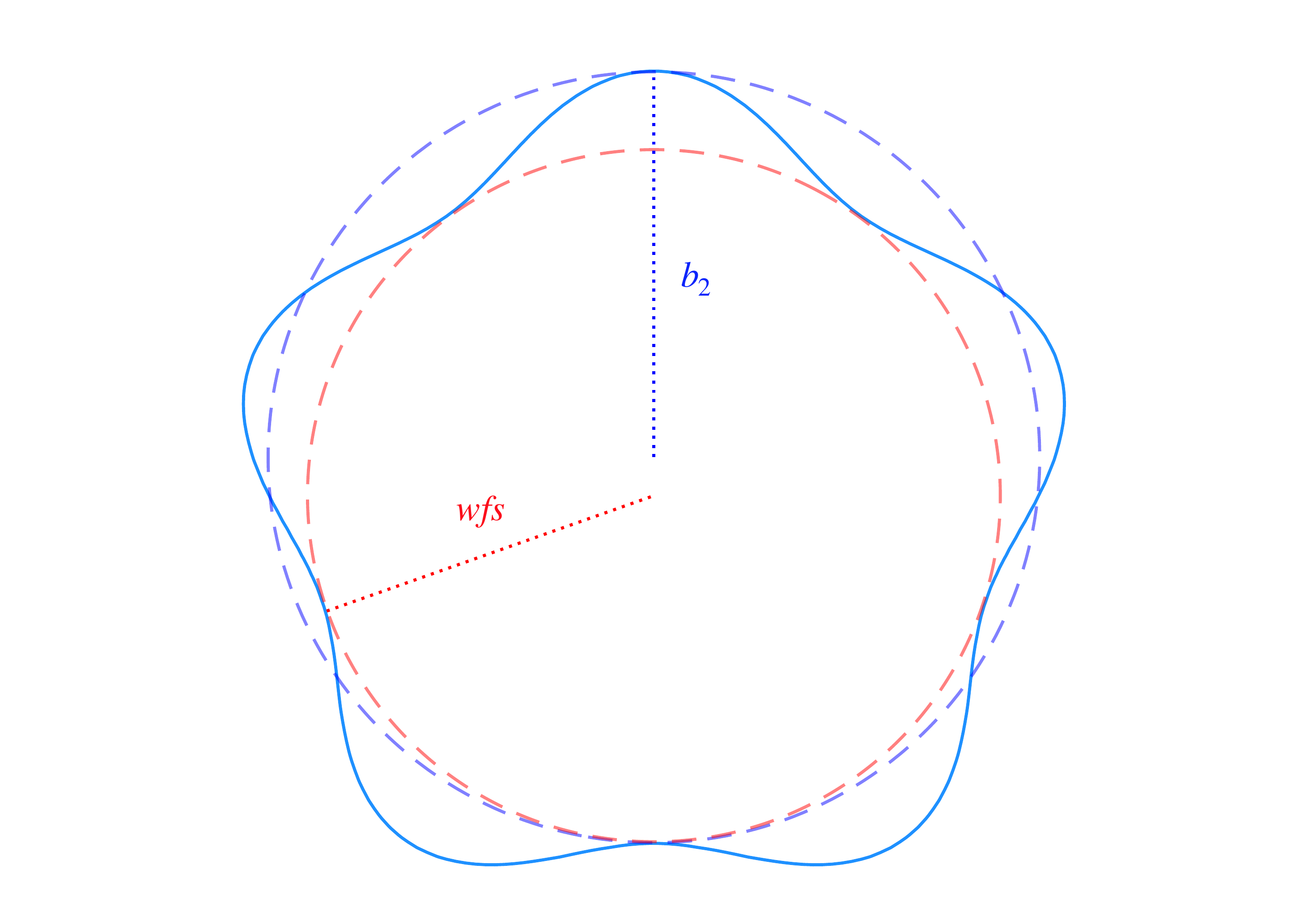}}
	\caption{Two examples of when the weak feature size is smaller than the narrowest bottleneck of a variety.
		\label{fig:wfs_bn_example}}
\end{figure}
\end{ex}
The varieties in Figure \ref{fig:wfs_bn_example} illustrate two examples in $\Bbb R^2$ of when the equality $wfs(X) = b_2$ fails. Both of these examples are oscillations of a circle in the sense that they can be described in the polar plane as $r = 1 + f(\cos(n\theta))$ for some function $f\colon \Bbb R\to \Bbb R$ and $n\in \Bbb Z$. Note that the curve segments between points of the generalized bottlenecks in Figure \ref{fig:wfs_bn_example} cannot bend too much, since then we would have a degree 2 bottleneck of smaller radius. It seems probable in $\Bbb R^2$ that $wfs(X) < b_2$ only if $X$ contains a component which is an oscillation of a circle, although this requires further investigation. We therefore make the conjecture that the equality $wfs(X) = b_2$ holds in general.

\subsection{Computing bottlenecks}
Let $X \subseteq \CC^n$ be a smooth variety defined by a system of
equations $F=(f_1,\dots,f_r) \subset \CC[x_1,\dots,x_n]$. The bottlenecks
of $X$ consists of pairs of distinct points $x,y \in X$ such that $(x-y) \perp
T_xX$ and $(x-y) \perp T_yX$. Here, $v \perp w$ means that
$\sum_{i=1}^n v_iw_i=0$ where $v=(v_1,\dots,v_n) \in \CC^n$ and
$w=(w_1,\dots,w_n) \in \CC^n$. Let $J_F$ denote the Jacobian matrix of
$F$, let $c=\codim{X}$ and assume that $c<n$. Expressed in terms of
defining equations, $(x,y)$ with $x \neq y$ is a bottleneck precisely
when the matrices
\[
\begin{pmatrix}
x-y\\
J_F(x)
\end{pmatrix}, \quad
\begin{pmatrix}
x-y\\
J_F(y)
\end{pmatrix}
\]
both have rank $\leq c$. The bottlenecks of $X$ are thus part of the
zero locus $Z \subseteq X\times X$ of the system given by the $(c+1)
\times (c+1)$-minors of these matrices together with the defining
equations $F(x)$ and $F(y)$. Then $Z \setminus \Delta$ describes the bottleneck locus $B_2$, where $\Delta \subset X \times X$ is the
diagonal.

An efficient method to compute all isolated bottlenecks of a smooth
algebraic variety in general position is presented in \cite{E18}. The
method makes use of numerical homotopy methods \cite{SW05,
  BHSW13}. Suppose that $X \subset \RR^n$ is smooth and compact. As we
saw in \secref{sec:edd}, the radius of the narrowest bottleneck is
intimately connected to the reach of $X$. Even though the bottleneck
locus is expected to be finite, it might not be and in this case we
would be required to solve an optimization problem to find the
narrowest bottleneck. The case of finitely many bottlenecks is
simpler since then computing all the bottlenecks suffices. For these
reasons (among others) it is useful to know when the bottleneck locus
is finite. In this section we provide a first step in this direction
by proving that general complete intersections have only finitely many
bottlenecks. Regarding the number of bottlenecks for general enough
algebraic varieties, a formula for low dimensional varieties is given in \cite{DEW19}.

Let $(d_1,\dots,d_c) \in \NN^c$ with $d_i \geq 1$ and let
$P_i=\PP^{N_i}$ where $N_i={d_i+n \choose d_i}-1$ be the parameter
space of hypersurfaces in $\CC^n$ of degree at most $d_i$. Complete
intersections in $\CC^n$ of codimension $c$ and degree type
$(d_1,\dots,d_c)$ are parameterized by an open subset $U \subset
\prod_{i=1}^c P_i$ and we write $X_u \subseteq \CC^n$ for the complete
intersection corresponding to $u \in U$. Let $V=\CC^n\times \CC^n
\setminus \Delta$, where $\Delta$ is the diagonal.

If $c=n$ the complete intersection itself is
finite, hence we assume that $c<n$. We may also assume that
none of the equations are linear, that is $d_i \geq 2$ for all
$i$. For a generic hyperplane $H \subset \CC^n$ there is a complex
orthogonal linear map which takes $H$ to the hyperplane $\{x_1=0\}$
and such a linear map preserves bottlenecks. Moreover, a smooth
variety $X \subseteq \CC^{n-1}$ with $\CC^{n-1} = \{x_1=0\} \subset
\CC^n$ has the same bottlenecks when seen as a subvariety of
$\CC^n$. By induction on the number of linear equations we may assume
that $d_i \geq 2$ for all $i$.

\begin{defn}
	Let the \emph{bottleneck correspondence} denote the following set:
	\[
	U \times V \supseteq \mathcal{B} \coloneqq \{(u,x,y): x,y \in X_u, (x-y) \perp
	T_xX_u, (x-y) \perp T_yX_u\}.
	\]
\end{defn}
In order to show that a general complete intersection has finite
bottleneck locus we need to show that the generic fiber of the
projection $\eta:\mathcal{B} \to U$ is finite. We do this by first
studying the dimension of the fibers of the projection
$\pi:\mathcal{B} \to V$.

Consider invertible affine transformations $g:\CC^n \to \CC^n$ of the
form $gx=R(sx+t)$ where $t \in \CC^n$ is a translation, $s \in \CC^*$
is a scaling and $R \in O(n)$ is a complex orthogonal map. Let $G$
denote the subgroup of all such transformations of the group of
invertible affine transformations of $\CC^n$. Then $G$ acts on $\CC^n$
and $G$ acts on $U$ by pre-composition of the defining equations with
$g^{-1}$ for $g \in G$. Moreover, $G$ acts on $\mathcal{B}$ via
$g(u,x,y)=(gu,gx,gy)$ and on $V$.


\begin{lemma}
	Let $e_1, \dots, e_n$ denote the standard basis in $\CC^n$. The action of $G$ on $V$ has two orbits generated by the pairs $(e_1, 0)$ and $(e_1+i e_2, 0) $.
\end{lemma}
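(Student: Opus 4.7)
The plan is to reduce the orbit analysis to the action of the stabilizer of $0 \in \CC^n$ on $\CC^n \setminus \{0\}$, and then to use the standard classification of orbits of $O(n,\CC)$ on complex vectors via the invariant quadratic form $Q(v)=\sum_{i=1}^n v_i^2$.

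First I would observe that $G$ contains the full translation subgroup $\{x \mapsto x+t : t \in \CC^n\}$, so the map $(x,y) \mapsto (x-y,\,0)$ shows that every $G$-orbit on $V$ meets the slice $\{(v,0) : v \in \CC^n \setminus \{0\}\}$. Thus it is enough to analyze the orbits of the subgroup $G_0 = \{x \mapsto sRx : s \in \CC^*,\, R \in O(n,\CC)\}$ (the stabilizer of $0$) on $\CC^n \setminus \{0\}$.

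Next, I would use the invariant $Q(v) = \sum v_i^2$. Under $g(v)=sRv$, we have $Q(gv) = s^2 Q(v)$, so the condition $Q(v)=0$ is $G_0$-invariant. This partitions $\CC^n \setminus \{0\}$ into the isotropic cone (minus origin) $\{Q(v)=0,\,v\neq 0\}$ and its complement $\{Q(v)\neq 0\}$. I would then show $G_0$ acts transitively on each of these two sets. For non-isotropic vectors, given $v$ with $Q(v)\neq 0$, a scaling reduces to $Q(v)=1$; then since $(\CC^n,Q)$ is a non-degenerate quadratic space, $v$ can be completed to a $Q$-orthonormal basis, giving an $R\in O(n,\CC)$ sending $e_1$ to $v$. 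For nonzero isotropic vectors, after a scaling one picks a hyperbolic partner $v'$ with $Q(v')=0$ and $\sum v_iv'_i = 1$, and Witt's extension theorem produces an $R\in O(n,\CC)$ taking the pair $(e_1+ie_2,\,\tfrac{1}{2}(e_1-ie_2))$ to $(v,v')$; in particular $R(e_1+ie_2)=v$.

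Finally, I would check the representatives: $Q(e_1) = 1 \neq 0$, so $(e_1,0)$ lies in the non-isotropic orbit, while $Q(e_1+ie_2) = 1 + i^2 = 0$, so $(e_1+ie_2,0)$ lies in the isotropic orbit. Since the two orbits are separated by the $G_0$-invariant condition $Q(v)=0$ vs.\ $Q(v)\neq 0$, they are genuinely distinct, proving there are exactly two $G$-orbits on $V$ with the claimed generators. The main subtlety is the transitivity of $O(n,\CC)$ on the punctured complex isotropic cone, which relies on Witt's theorem rather than on the compactness-style arguments familiar from the real orthogonal group; everything else is bookkeeping with translations and scalings.
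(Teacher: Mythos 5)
Your proof is correct and follows essentially the same route as the paper's: both reduce, via the translation subgroup, to the action of the $0$-stabilizer (scalings and complex rotations) on $\CC^n\setminus\{0\}$, and both split by the $G_0$-invariant dichotomy $Q(v)\neq 0$ versus $Q(v)=0$, using transitivity of $O(n,\CC)$ on the complex sphere for the first orbit. The only real difference is in the isotropic case: you invoke Witt's extension theorem to send a hyperbolic pair to a hyperbolic pair, whereas the paper constructs the required orthonormal pair $(m_1,m_2)$ completely explicitly from a nonzero coordinate $x_i$ of $x$ (taking $m_1=e_i$ and $m_2=-\tfrac{i}{x_i}(x-x_ie_i)$) and then extends to an element of $O(n,\CC)$. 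Your Witt argument is cleaner conceptually and generalizes, while the paper's is elementary and self-contained; note that the paper's appeal to transitivity of $O(n,\CC)$ on $\{x^Tx=1\}$ is itself a Witt-type statement, so in substance you are just making the same dependency explicit. Both are fine.
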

\begin{proof}
  The group $O(n)$ acts transitively on the sphere $S^{n-1}=\{x \in \CC^n:\sum_i x_i^2=1\}$. Therefore the equation $Mx = s e_1$, where $M\in O(n)$ and $s\in \CC^*$, has a solution whenever $x^T x \neq 0$.  Consequently, for any $(a,b) \in V$ with $(a-b)^T(a-b) \neq 0$, there exists a $g\in G$ such that $g(a, b) = (e_1, 0)$. Let $Q\coloneqq \{ (a, b)\in V \mid (a-b)^T(a-b) = 0 \}$. It remains to show that for any $(a, b)\in Q$, there is a $g\in G$ such that $g(a, b)=(e_1+ie_2, 0)$. This means that there is an $M \in O(n)$ and $s\in\CC^*$ such that $$sM(a-b) = e_1+ie_2 \Leftrightarrow s(a-b) = M^{-1}(e_1+ie_2) = m_1+im_2$$
	where $m_1, m_2$ are the first two columns of $M^{-1}$. Note that $(m_1+im_2)^T(m_1+im_2)=0$ since $m_1^T m_2 = 0$ and $m_1^Tm_1=m_2^Tm_2 =1$. It is straightforward to see that any non-zero vector $x\in \CC^n$ such that $x^Tx=0$ can be written as a sum $\frac{1}{s}(m_1+im_2)$ of two orthonormal vectors $m_1, m_2$ and a scalar $s$. Namely, for any non-zero entry $x_i$ of $x$, we have that $x=x_i (e_i + \frac{1}{x_i}(x-x_ie_i))$. 
\end{proof}
Note that the projection $\pi:\mathcal{B}\to V$ is equivariant under
the action of $G$. This means that $\dim{\pi^{-1}(x,y)}$ is the same
for all $(x,y) \in V$ within the same orbit. We will first compute
$\dim{\pi^{-1}(x,y)}$ for elements in the orbit generated by $(e_1,
0)$.
\begin{lemma}\label{lem:orbit}
	The codimension of $\pi^{-1}(e_1,0)$ as a subvariety of $U$ is $2n$.
\end{lemma}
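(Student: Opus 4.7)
The plan is to compute the codimension of $\pi^{-1}(e_1,0)$ via an auxiliary incidence variety, parametrized by Lagrange-type multipliers that convert the ``normal space'' conditions into algebraic equations. Introduce $\lambda=(\lambda_1,\ldots,\lambda_c),\,\mu=(\mu_1,\ldots,\mu_c)\in \CC^c$ and let $\tilde W\subset U\times \CC^c\times \CC^c$ be cut out by the $2c+2n$ equations
\begin{gather*}
f_i(0)=0,\quad f_i(e_1)=0 \qquad (i=1,\ldots,c),\\
\sum_{i=1}^c \lambda_i\,\nabla f_i(0)=e_1,\qquad \sum_{i=1}^c \mu_i\,\nabla f_i(e_1)=e_1.
\end{gather*}
The first two groups place $0$ and $e_1$ on $X_u$; the last two encode $(e_1-0)=e_1\in N_0 X_u\cap N_{e_1}X_u$. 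Since $\tilde W$ is defined by $2c+2n$ equations, its codimension in $U\times \CC^c\times \CC^c$ is at most $2c+2n$.

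I would then show equality by exhibiting a point $(u^*,\lambda^*,\mu^*)\in \tilde W$ at which the Jacobian of the $2c+2n$ defining equations is surjective. A concrete candidate is $f_1^*(x)=x_1-x_1^2$ (or $f_1^*(x)=x_1-3x_1^2+2x_1^3$ when $d_1\geq 3$) together with $f_i^*(x)=x_i$ for $i\geq 2$, with $\lambda^*,\mu^*\in \CC^c$ of the form $(\ast,0,\ldots,0)$ chosen so that the two gradient equations hold (e.g.\ $\lambda^*=(1,0,\ldots,0)$ and $\mu^*=(-1,0,\ldots,0)$ in the quadratic case). Choosing $2c+2n$ pivot coordinates, namely the constants $a_{i,0}$, the coefficient of $x_1^{d_1}$ in $f_1$ and of $x_1$ in $f_i$ for $i\geq 2$, the linear coefficients $a_{1,e_m}$ of $f_1$ for $m=1,\ldots,n$, the cross coefficients $a_{1,e_1+e_m}$ of $f_1$ for $m=2,\ldots,n$, and $\mu_1$, one checks directly that the Jacobian becomes block-triangular with visibly invertible diagonal blocks at $(u^*,\lambda^*,\mu^*)$. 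This forces $\codim{\tilde W}=2c+2n$, so $\dim{\tilde W}=\dim{U}-2n$ at this point.

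Finally, the projection $\tilde W\to \pi^{-1}(e_1,0)\subset U$, $(u,\lambda,\mu)\mapsto u$, is surjective, since by the very definition of $\pi^{-1}(e_1,0)$ every such $u$ admits multipliers $\lambda,\mu$. It is also generically finite: on the open dense subset where $X_u$ is smooth at both $0$ and $e_1$, the matrices $J_F(0)$ and $J_F(e_1)$ have rank $c$, and hence $\lambda,\mu$ are uniquely determined. Combining the two steps, $\dim{\pi^{-1}(e_1,0)}=\dim{\tilde W}=\dim{U}-2n$, whence $\codim{\pi^{-1}(e_1,0)}=2n$, as claimed.

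The main obstacle I expect is the Jacobian rank verification. Since $f_i(e_1)=0$ and the first component of $\sum_i\mu_i\nabla f_i(e_1)=e_1$ both involve the pure-$x_1$ coefficients of $f_i$, the pivots must be chosen carefully to avoid collisions; using $\mu_1$ itself as a pivot for the otherwise problematic $D_1$ equation, together with the assumption $d_i\geq 2$ (which guarantees enough higher-order coefficients), is what allows the selected $2c+2n$ columns of the Jacobian to be linearly independent at the constructed point.
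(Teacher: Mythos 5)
Your approach is genuinely different from the paper's. The paper works directly with the rank-drop condition on the augmented Jacobian matrices, reduces to a single quadric via a ``projection preserves linear dependence'' argument, writes out the relevant linear forms explicitly, and then invokes the standard codimension formula for determinantal varieties (Harris, Prop.~12.2) to conclude that $\pi^{-1}(e_1,0)$ is a proper intersection of two determinantal varieties (codim $n-c$ each) and a linear space (codim $2c$). You instead pass to an incidence variety $\tilde W$ with Lagrange multipliers and count equations. The multiplier formulation is appealing because it replaces minors by honest polynomial equations, but as written the argument has two gaps.

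First, the surjectivity of $\tilde W \to \pi^{-1}(e_1,0)$ is asserted ``by the very definition,'' but this is not quite right. The set $\pi^{-1}(e_1,0)$ is cut out by the condition that the augmented matrices $\bigl(\begin{smallmatrix} e_1 \\ J_{F_u}(\cdot)\end{smallmatrix}\bigr)$ have rank $\leq c$, and this is strictly weaker than the existence of multipliers: if $J_{F_u}$ itself drops rank at $e_1$ or $0$, the rank condition is automatic yet $e_1$ need not lie in the row span, so no $(\lambda,\mu)$ exists. Thus $\tilde W$ only dominates the part of $\pi^{-1}(e_1,0)$ where both Jacobians have full rank $c$, and you would need a separate (easy but necessary) codimension estimate showing that the complementary locus, where $J_{F_u}$ drops rank at one of the two points \emph{and} both points lie on $X_u$, has codimension strictly larger than $2n$ in $U$. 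Second, the Jacobian surjectivity at the single point $(u^*,\lambda^*,\mu^*)$ only shows that $\tilde W$ is smooth of codimension $2c+2n$ \emph{locally at that point}; it does not preclude $\tilde W$ from having another irreducible component of strictly lower codimension (Krull's theorem gives codimension $\leq 2c+2n$ for every component, but not $\geq$). Since \thmref{thm:finite-bn} needs an upper bound on the dimension of \emph{every} component of the fiber, you would still have to argue either irreducibility of $\tilde W$, or equidimensionality, or run the determinantal argument after all. The paper's route sidesteps both issues because the determinantal varieties involved are known to be irreducible of the expected codimension, and properness of the triple intersection follows from the explicit linear independence check.
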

\begin{proof}
For $u \in U$ let $F_u$ denote the corresponding system of equations and consider the $(c+1) \times n$-matrices
\[
\begin{pmatrix}
e_1\\
J_{F_u}(e_1)
\end{pmatrix}=
\begin{pmatrix}
1 &0 &\dots &0\\
&&J_{F_u}(e_1)&
\end{pmatrix}, \quad
\begin{pmatrix}
e_1\\
J_{F_u}(0)
\end{pmatrix}=
\begin{pmatrix}
1 &0 &\dots &0\\
&&J_{F_u}(0)&
\end{pmatrix}.
\]
The condition defining $\pi^{-1}(e_1,0)$ is that these matrices drop
rank and that $F_u(e_1) = 0$ and $F_u(0) = 0$. The condition that the above matrices drop in rank is equivalent to the condition that the $c \times
(n-1)$-matrices ${J}_{F'_u}(e_1)$ and ${J}_{F'_u}(0)$ drop rank, where
${J}_{F'_u}$ is $J_{F_u}$ with the first column removed.

We shall see that the entries of $F_u(e_1)$, $F_u(0)$,
${J}_{F'_u}(e_1)$ and ${J}_{F'_u}(0)$ are all independent as
homogeneous linear forms in $u \in U$. Independence across different
equations is clear since these involve disjoint groups of variables
coming from different spaces $P_i$. It is therefore enough to consider
the case $c=1$ of one polynomial $f_u \in
\CC[x_1,\dots,x_n]$. Moreover, it suffices to assume that $d_1 =
2$. Namely, if the there is a linear relation between the forms for
some $d_1>2$, the same linear relation holds between the forms in the
case $d_1=2$ (a set of dependent vectors in a vector space remain
dependent after projection to a lower dimensional space).

Suppose $f_u \in \CC[x_1,\dots,x_n]$ is a generic quadric and let
$\nabla f'_u$ denote $\nabla f_u$ with the first entry removed. We
will show that $f(e_1)$, $f(0)$ and the entries of $\nabla f_u'(e_1)$
and $\nabla f_u'(0)$ are all independent linear forms. Let
\begin{equation} \label{eq:quadric}
f_u(x) :=
\sum_{1\leq i \leq j \leq n } u_{ij} x_ix_j + \sum_{i=1}^n u_{0i}x_i +
u_{00}.
\end{equation}
Then by inspection we note that the following linear forms
are all linearly independent:
	\begin{align*}
	f(e_1) &= u_{11} + u_{01} + u_{00}\\
	f(0) &= u_{00}\\
	\nabla f_u'(e_1) &= \begin{bmatrix}
	u_{12} + u_{02} & u_{13} + u_{03} & \dots & u_{1n} + u_{0n}
	\end{bmatrix} \\
	\nabla f_u'(0) &= \begin{bmatrix}
	u_{02} & u_{03} & \dots & u_{0n}
	\end{bmatrix}
	\end{align*}
	
To compute $\dim{\pi^{-1}(e_1,0)}$ we may now apply standard results on the dimensions of linear determinantal varieties. We can express $\pi^{-1}(e_1,0)$ as a proper intersection of
three varieties: two determinantal varieties given by the rank
conditions on ${J}_{f'_u}(e_1)$ and ${J}_{f'_u}(0)$ and a linear space
defined by the $2c$ linear forms $f_u(e_1), f_u(0)$. The determinantal varieties both have codimension $n-c$ as subvarieties of $U$ by \cite{H92} Proposition 12.2. It follows that $\pi^{-1}(e_1,0)$ has codimension $2n$ as a subvariety of $U$. 
\end{proof}

It remains to compute $\dim{\pi^{-1}(x, y)}$ for the orbit generated by $(e_1+ie_2, 0)$.

\begin{lemma}\label{lem:orbit2}
	The codimension of $\pi^{-1}(e_1+ie_2, 0)$ as a subvariety of $U$ is $2n$ except in the special case when $c=1$ and deg$(X_u)=2$ in which case it is $2n-1$.
\end{lemma}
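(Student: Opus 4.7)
The plan is to mirror the proof of Lemma~\ref{lem:orbit} with $v := e_1 + ie_2$ replacing $e_1$; the key new feature is that the isotropy $v^T v = 0$ gives rise to Euler-type linear relations among the defining forms.

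As before, I use the first row $v = (1, i, 0, \ldots, 0)$ of each rank matrix to clear the first column of the corresponding Jacobian block, reducing the rank-$\leq c$ condition on $\binom{v}{J_{F_u}(v)}$ and $\binom{v}{J_{F_u}(0)}$ to a rank-$\leq c-1$ condition on $c \times (n-1)$ matrices $M_v, M_0$ whose entries are explicit linear forms in $u$: $(M_v)_{k,1} = \partial_2 f_k(v) - i\partial_1 f_k(v) = -iD_v f_k(v)$ (where $D_v := v^T \nabla$) and $(M_v)_{k, j-1} = \partial_j f_k(v)$ for $j \geq 3$; analogously for $M_0$ at the origin.

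I then identify the linear relations among the $2c + 2c(n-1) = 2cn$ forms in $u$ given by $F_u(v), F_u(0)$ and the entries of $M_v, M_0$. Decomposing $f_k = \sum_j f_k^{(j)}$ into homogeneous parts, Euler's identity $D_v f_k(v) = \sum_{j \geq 1} j f_k^{(j)}(v)$ together with $f_k(v) = \sum_j f_k^{(j)}(v)$, $f_k(0) = f_k^{(0)}$, and $D_v f_k(0) = f_k^{(1)}(v)$ shows that if $d_k = 2$ the forms satisfy exactly one relation
\[
2 f_k(v) - 2 f_k(0) - i(M_v)_{k,1} - i(M_0)_{k,1} = 0;
\]
for $d_k \geq 3$, the intermediate components $f_k^{(j)}(v)$ with $1 < j < d_k$ involve coefficients of $f_k$ appearing in no other listed form, so no new relation arises. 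Apart from these Euler relations, the $2cn$ forms are independent by the same disjoint-coefficient-groups inspection as in Lemma~\ref{lem:orbit} (different $f_k$ involve disjoint variables, and within each $f_k$ the entries in columns $\geq 2$ of $M_v, M_0$ use variables disjoint from the first-column forms).

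Finally I read off codimensions. In the special case $c = 1, d_1 = 2$: the rank-$\leq 0$ condition forces $M_v = M_0 = 0$ as row vectors in $\CC^{n-1}$, so $\pi^{-1}(v, 0)$ is the vanishing locus of the $2n$ linear forms, which satisfy exactly one relation, yielding codim $2n - 1$. In all other cases the two determinantal conditions each contribute codim $n - c$ in $U$ via \cite{H92} Prop.~12.2, and on the slice $\{F_u(v) = F_u(0) = 0\}$ the Euler relations (if any) cut the image of $u \mapsto (M_v(u), M_0(u))$ to a codim-$(\#\{k : d_k = 2\})$ linear subspace $W \subset \mathrm{Mat}_{c \times (n-1)}^2$. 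The main obstacle — an explicit dimension count on $(D \times D) \cap W$, splitting according to whether the shared first column $(M_v)_{\cdot,1} = -(M_0)_{\cdot,1}$ vanishes — verifies that this intersection is proper of codim $2(n - c)$ in $W$, yielding total codim $2c + 2(n - c) = 2n$ in $U$.
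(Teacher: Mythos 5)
Your overall strategy mirrors the paper's: perform the column operation clearing the first column via the first row $v = e_1 + ie_2$, reduce to a rank-$\leq c-1$ condition on $c \times (n-1)$ matrices plus the $2c$ linear conditions $F_u(v)=F_u(0)=0$, and then count the dimension of the resulting variety. Your derivation of the linear relations is cleaner than the paper's: decomposing into homogeneous parts and invoking Euler's identity $D_v f_k(v) = \sum_j j f_k^{(j)}(v)$ exposes exactly why one relation appears when $d_k = 2$ and none when $d_k \geq 3$, whereas the paper writes out the explicit forms for a cubic and a quadric and finds the relation by inspection. (Your relation $2f_k(v) - 2f_k(0) - i(M_v)_{k,1} - i(M_0)_{k,1} = 0$ is in fact correct; the signs in the paper's displayed relation appear off by a factor of $i$.) For $c = 1$ your argument is complete: rank $\leq 0$ forces $M_v = M_0 = 0$, so you have $2n$ linear forms and the relation count gives codim $2n$ when $d_1 \geq 3$ and $2n-1$ when $d_1 = 2$, matching the paper.

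The genuine gap is in the $c > 1$ case. You correctly reduce the problem to showing that $(D \times D) \cap W$ has codimension $2(n-c)$ inside the linear subspace $W$ cut out by $(M_v)_{k,1} = -(M_0)_{k,1}$ for each degree-2 defining equation, and you correctly identify that a case analysis on whether the shared first column vanishes is the natural approach. But you then flag this as ``the main obstacle'' and simply assert its conclusion without carrying out the dimension count. This is precisely where the possibility of non-proper intersection lives: $W$ couples the first columns of the two matrices, and those columns are exactly the entries that participate in the generic-rank stratification of $D$. The paper discharges this step with a specific argument that no product of the linear relations $\ell_{u_i}$ lies in the (degree-$c$) determinantal ideals, forcing transversality. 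Whether or not one finds that argument fully satisfying, some argument is needed here, and your proposal stops at announcing the step rather than proving it. Until the dimension count (or an alternative transversality argument) is actually supplied, the proof of the $c > 1$ case is incomplete.
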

\begin{proof}
	Similar to the proof of Lemma \ref{lem:orbit} we consider the following $(c+1)\times n$-matrices:
	\begin{equation}\label{eq:lem34}
	\begin{pmatrix}
	e_1+ie_2\\
	J_{F_u}(e_1+ie_2)
	\end{pmatrix}=
	\begin{pmatrix}
	1 &i &\dots &0\\
	&&J_{F_u}(e_1+ie_2)&
	\end{pmatrix}, \quad
	\begin{pmatrix}
	e_1+ie_2\\
	J_{F_u}(0)
	\end{pmatrix}=
	\begin{pmatrix}
	1 &i &\dots &0\\
	&&J_{F_u}(0)&
	\end{pmatrix},
	\end{equation}
	and again, the condition defining $\pi^{-1}(e_1+ie_2, 0)$ is that
	these matrices drop in rank and that $F_u(e_1+ie_2)=0$ and
	$F_u(0)=0$. We may now perform the column operation where we subtract
	$i$ times the first column from the second column. Let $J_{F''_u}$
	denote the result when this operation is performed on $J_{F_u}$ and
	the first column removed. The condition defining
	$\pi^{-1}(e_1+ie_2, 0)$ can now be expressed as $J_{F''_u}(e_1+ie_2)$
	and $J_{F''_u}(0)$ dropping rank together with $F_u(e_1+ie_2)=0$ and
	$F_u(0)=0$.
	
	Consider first the case $c=1$. In this case the equations defining
	$\pi^{-1}(e_1+ie_2, 0)$ are linear and we need to determine the rank
	of this linear system. 
	Let $f_u \in \CC[x_1,\dots,x_n]$ be the polynomial defining $X_u$. We assume that $f_u$ is a general polynomial of degree three in $ \CC[x_1,\dots,x_n]$. We will see that this describes the general case and that we get a special case when deg$(f_u)=2$. The polynomial is represented as follows:
	\begin{equation} \label{eq:cubic}
	f_u(x) := \sum_{1\leq i \leq j \leq k \leq n } u_{ijk} x_ix_jx_k +
	\sum_{1\leq i \leq j \leq n } u_{0ij} x_ix_j + \sum_{i=1}^n u_{00i}x_i +
	u_{000}.
	\end{equation}

	\begin{align*}
	f_u(e_1+ie_2) &= u_{111} + iu_{112} - u_{122} - iu_{222} + u_{011} + iu_{012} - u_{022} + u_{001} + iu_{002} + u_{000}\\
	f_u(0) &= u_{000}\\
	\nabla f_u''(e_1+ie_2) &= [u_{112} + 2i u_{122} - 3u_{222} + u_{12}+ 2iu_{22}  + u_{02} - i(3u_{111} + 2i u_{112} - u_{122}+ 2u_{11} + \\
	&  \ \ \ \ + iu_{12} + u_{01} )\ \ \ \    u_{113} + iu_{123} - u_{223} + u_{13} + iu_{23} + u_{03} \ \ \ \   \dots \ \ \ \   \\
	& \ \ \ \ \ u_{11n} + iu_{12n} - u_{22n} + u_{1n} + iu_{2n} + u_{0n}]\\
	\nabla f_u''(0) &= \begin{bmatrix} u_{02} - iu_{01} &  u_{03} & \dots &  u_{0n}\end{bmatrix}
	\end{align*}
	

	The principal minors of $\nabla f_u''(e_1+ie_2)$ and $\nabla f_u''(0)$ are systems of linear equations. Consequently, the system of equations determining $\pi^{-1}(e_1+ie_2, 0)$ is the zero set of the above linear forms. It is easily verified that all the above linear forms are linearly independent, which means that the linear system determining $\pi^{-1}(e_1+ie_2, 0)$ has full rank. Since there are $2n$ linear forms this means that the rank of the system is $2n$ and thus  the codimension of $\pi^{-1}(e_1+ie_2, 0)$ as a subvariety in $U$ is $2n$. By an analogous argument as in Lemma \ref{lem:orbit} it then follows that this result holds for any general polynomial of degree $\geq 3$ in $\CC[x_1,\dots,x_n]$. 
	
	When $f_u$ has degree two we get a special case. The system describing $\pi^{-1}(e_1+ie_2, 0)$ is now the above system where we delete all coefficients corresponding to degree three terms. We then note that  there is a linear dependence between the linear forms $f_u(e_1+ie_2)$, $f_u(0)$ and the first linear forms of the vectors $\nabla f_u''(e_1+ie_2)$ and $\nabla f_u''(0)$. This dependence is described by the following relation: 
	$$2f_u(e_1+ie_2) - 2f_u(0) - \big[u_{12}+ 2iu_{22}  + u_{02} - i(2u_{11} + iu_{12} + u_{01})\big] + \big[u_{02} - iu_{01}\big] = 0$$
	This is the only relation between the above linear forms, let $\ell$ denote this linear relation. Hence the system of equations defining $\pi^{-1}(e_1+ie_2, 0)$ is a linear system of rank $2n-1$. It follows that the codimension of $\pi^{-1}(e_1+ie_2, 0)$ as a subvariety in $U$ is $2n-1$.

	
	
	
	Consider now the case $c>1$ and $d_1=\dots =d_c=2$. First note that the two determinantal varieties cut out by the principal minors of $J_{F''_u}(e_1+ie_2)$ and $J_{F''_u}(0)$ consist of independent linear forms and their intersection thus has codimension $2n-2c$. Note that since they each consist of disjoint variable groups it follows that their intersection is in fact a product. Each of the two determinantal varieties are by \cite{CM_1971__23_4_407_0} irreducible and reduced and the defining equations are homogeneous of degree $c$. Now let $u=(u_1, \dots, u_c)$ denote the coefficients of the polynomials determining $X_u$. By the above results, we have two linear equations, $f_{u_j}(e_1+ie_2) = 0$ and $f_{u_j}(0) = 0$, for each $1\leq j\leq c$ yielding $2c$ linear equations which are all linearly independent. Let $\ell_{u_i}$ denote the linear relation in the $c=1$ case corresponding to the $i$th defining equation of $X_u$. Note that since the determinantal varieties have homogeneous defining equations of degree $c$, there is no polynomial of degree $\leq c$ contained in these ideals. Thus no product $\prod_{i \in J\subset\{ 1, \dots, c \}} \ell_{u_i}$ is contained in the ideals if $|J|<c$. It is then a question if $\prod_{i \in \{ 1, \dots, c \}} \ell_{u_i}$ is in either of the determinantal ideals. But this cannot be since it would correspond to the product of the elements in one column of $J_{F''_u}(e_1+ie_2)$ or $J_{F''_u}(0)$  to be in the corresponding determinantal ideal. This cannot happen since the linear forms of each matrix are linearly independent. It follows that the intersection of the determinantal varieties and the $2c$ linear forms is transverse and that the codimension of the intersection is $2n$ in $U$. The codimension of the analogous intersection when $d_i\geq 2$ for $1\leq i\leq c$ when $c>1$ is also $2n$ in $U$. Thus we conclude that for $c>1$, the codimension of $\pi^{-1}(e_1+ie_2, 0)$ as a subvariety in $U$ is $2n$. 
\end{proof}





We are now ready to state the following:
\begin{thm} \label{thm:finite-bn}
A generic complete intersection has only finitely many bottlenecks.
\end{thm}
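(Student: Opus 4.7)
The plan is to combine the $G$-equivariance of the projection $\pi:\mathcal{B}\to V$ with \lemmaref{lem:orbit} and \lemmaref{lem:orbit2} to bound $\dim{\mathcal{B}}$ by $\dim{U}$, and then read off finiteness of the generic fiber of $\eta:\mathcal{B}\to U$ via the generic fiber dimension theorem.

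First, decompose $V$ into the two $G$-orbits already identified: the open orbit $V_1$ generated by $(e_1,0)$, of dimension $2n$, and the closed isotropic orbit $V_2$ generated by $(e_1+ie_2,0)$, cut out by $(x-y)^{T}(x-y)=0$ and hence of dimension $2n-1$. Because $\pi$ is $G$-equivariant, every fiber of $\pi$ above a point of $V_i$ has the same dimension as the representative fiber computed in the corresponding lemma. The fiber dimension theorem then gives
\[
\dim{\pi^{-1}(V_1)} \;=\; 2n + (\dim{U}-2n) \;=\; \dim{U},
\]
while $\dim{\pi^{-1}(V_2)}$ equals $(2n-1)+(\dim{U}-2n)=\dim{U}-1$ outside the exceptional case $c=1$, $\deg{X_u}=2$, and equals $(2n-1)+(\dim{U}-2n+1)=\dim{U}$ inside that exceptional case. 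In particular, every irreducible component of $\mathcal{B}$ has dimension at most $\dim{U}$.

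Applying the generic fiber dimension theorem to $\eta$ then finishes the argument: for each irreducible component $\mathcal{B}'\subseteq\mathcal{B}$, either $\eta(\mathcal{B}')$ is a proper subvariety of $U$ that a generic $u$ avoids, or $\eta|_{\mathcal{B}'}$ is dominant and the generic fiber has dimension $\dim{\mathcal{B}'}-\dim{U}\le 0$, hence is finite. Taking the union over the finitely many components yields $\eta^{-1}(u)$ finite for $u$ in a dense open subset of $U$, which is exactly the statement that a generic complete intersection has only finitely many bottlenecks.

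The main delicate point is the hyperquadric case $c=1$, $\deg{X_u}=2$, where $\pi^{-1}(V_2)$ sits at the borderline dimension $\dim{U}$ rather than $\dim{U}-1$. One must check that no component of $\mathcal{B}$ actually exceeds $\dim{U}$; since \lemmaref{lem:orbit2} gives the equality $\dim{\pi^{-1}(e_1+ie_2,0)}=\dim{U}-2n+1$ and not strict excess, the bound $\dim{\mathcal{B}'}\le\dim{U}$ survives and the generic fiber dimension argument goes through uniformly.
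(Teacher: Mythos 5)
Your proof follows essentially the same route as the paper's: decompose $V$ into the two $G$-orbits, use $G$-equivariance together with Lemmas~\ref{lem:orbit} and~\ref{lem:orbit2} to control the fiber dimensions of $\pi$, deduce $\dim{\mathcal{B}} \leq \dim{U}$, and then apply the generic fiber dimension theorem to $\eta$. The only stylistic difference is that you compute $\dim{\pi^{-1}(V_i)}$ directly for each orbit $V_i$, whereas the paper argues by cases on whether an irreducible component of $\mathcal{B}$ maps into $Q$ or not; both amount to the same dimension count, and your explicit component-by-component treatment of $\eta$ is a small clarification of the paper's terser ``it follows that the general fiber is finite.''
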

\begin{proof}
We will show that $\dim{\mathcal{B}}=\dim{U}$. It follows that the
general fiber of the projection $\eta: \mathcal{B} \to U$ is finite.

By Lemma \ref{lem:orbit} and \ref{lem:orbit2}, $\pi: \mathcal{B} \to
V$ is onto and the general fiber has dimension $\dim{U}-2n$. Since
$\dim{V}=2n$, it follows that $\mathcal{B}$ has an irreducible
component of dimension $\dim{U}$. Now let $\mathcal{B}' \subseteq
\mathcal{B}$ be an irreducible component. Suppose first that
$\pi(\mathcal{B}')$ is not contained in $Q = \{ (x, y)\in V \mid
(x-y)^T(x-y) = 0 \}$. Then $\dim{\mathcal{B}' \cap \pi^{-1}(x,y)} \leq
\dim{U}-2n$ for generic $(x,y) \in \pi(\mathcal{B}')$ by
\lemmaref{lem:orbit}. Hence $\dim{\mathcal{B}'} \leq \dim{U}$. Now
assume that $\pi(\mathcal{B}') \subseteq Q$. Then $\dim{\mathcal{B}'
  \cap \pi^{-1}(x,y)} \leq \dim{U}-2n+1$ for generic $(x,y) \in
\pi(\mathcal{B}')$ by \lemmaref{lem:orbit2}. But
$\dim{\pi(\mathcal{B}')} \leq 2n-1$ in this case and hence
$\dim{\mathcal{B}'} \leq \dim{U}$.
\end{proof}

\subsection{Locally estimating the reach}\label{sec:local_reach}
In this section we briefly explain how the reach of a smooth variety
$X \subset \RR^n$ can be estimated from below by sampling
the variety and estimating the
local reach. We will assume that $X$ is a subvariety of the unit
sphere $S^{n-1} \subset \RR^n$ defined by an ideal generated by
homogeneous polynomials $F=(f_1,\dots,f_q)$ of degrees $d_1,\dots,d_q$
with $q \leq n$.

The local reach is estimated using a result of \cite{CKS18, BCL17}
together with a result from Schub Smale theory. The first result
bounds the local reach at a point $x \in X$ in terms of the
\emph{$\gamma$-number} of the system $F$ at $x$ and the second result
bounds the $\gamma$-number in terms of the \emph{condition number} of
$F$ at $x$. To set this up we will restate some notation from
\cite{BCL17}. Consider a homogeneous polynomial $h = \sum_{|a|=d} h_a
x^a$ where $(a_1,\dots,a_n)$ is a multi index, $|a|=a_1 + \dots +a_n$,
$d$ is the degree of $h$, $h_a$ its coefficients and
$x^a=x_1^{a_1}\dots x_n^{a_n}$. The Weil norm $||h||_w$ of $h$ is
defined by $||h||_w^2=\sum_{|a|=d} h_a^2\binom{d}{a}^{-1}$ where
$\binom{d}{a}=\frac{d!}{a_1! \cdots a_n!}$ are multinomial
coefficients. The norm of the system $F=(f_1,\dots,f_q)$ is defined by
$||F||^2=\sum_{i=1}^q ||f_i||_w^2$. Let $X$ be the zero-set of $F$ and
let let $DF$ be the Jacobian matrix of $F$. For $x \in X$ consider the
pseudo-inverse $DF(x)^{\dagger}=DF(x)^T(DF(x)DF(x)^T)^{-1}$ of
$DF(x)$. Also, let $\Delta$ denote the diagonal matrix with diagonal
$(\sqrt{d_1},\dots,\sqrt{d_q})$ and for a matrix $M$ let
$||M||_{\text{spec}}$ denote the spectral norm of $M$, that is the
square root of the largest eigenvalue of the matrix $M^TM$. We may now
define the condition number of $F$ at $x \in X$ as
$\mu_{\text{norm}}(F,x)=||F|| \cdot
||DF(x)^{\dagger}\Delta||_{\text{spec}}$. Letting $D^kF$ for $k\geq 2$
denote the higher order derivatives of $F$, the $\gamma$-number is
defined as
\[
\gamma (F,x) = \sup_{k \geq 2}
\frac{1}{k!}||DF(x)D^kF(x)||^{1/(k-1)}.
\]
Let $D=\max\{d_1,\dots,d_q\}$. Now we state the first inequality,
which is Lemma 2.1(b) of \cite{SS96}:
\[2\gamma(F,x) \leq D^{3/2} \mu_{\text{norm}}(F,x).\]
The second inequality we use was first proved in \cite{CKS18} and
appears as Theorem 3.3 of \cite{BCL17}:
\[
\frac{1}{14\gamma (F,x)} \leq \tau_X(x).
\]
Putting this together we get a lower bound on the local reach:
\begin{equation} \label{eq:localreach}
	\frac{1}{7D^{3/2}\mu_{\text{norm}}(F,x)} \leq \tau_X(x).
\end{equation}

We now turn to the (global) reach $\tau_X$.
\begin{lemma} \label{lemma:reach}
	Let $E \subseteq X$ be an $\epsilon$-sample and let $m \leq \tau_X(e)$
	for all $e \in E$. Then $m-\epsilon \leq \tau_X$.
\end{lemma}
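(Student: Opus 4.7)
The plan is to exploit the characterization, recalled just before the statement, that the local reach $\tau_X(q)$ equals the distance from $q$ to the medial axis $M_X$, and similarly $\tau_X = d(X, M_X) = \inf_{x \in X} d(x, M_X)$. In these terms the claim becomes: if $d(e, M_X) \geq m$ for every $e \in E$ and $E$ is an $\epsilon$-sample, then $d(x, M_X) \geq m - \epsilon$ for every $x \in X$.

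First I would fix an arbitrary $x \in X$. By the $\epsilon$-sample hypothesis, there exists $e \in E$ with $\|x - e\| < \epsilon$. For any point $p \in M_X$, the triangle inequality gives $\|e - p\| \leq \|e - x\| + \|x - p\|$, so
\begin{equation*}
\|x - p\| \geq \|e - p\| - \|e - x\| \geq d(e, M_X) - \|e - x\|.
\end{equation*}
Taking the infimum over $p \in M_X$ on the left yields $d(x, M_X) \geq d(e, M_X) - \|e - x\| \geq m - \epsilon$, where the last inequality uses the hypothesis $m \leq \tau_X(e) = d(e, M_X)$ and $\|e-x\| < \epsilon$.

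Since $x \in X$ was arbitrary and $\tau_X(x) = d(x, M_X)$, we conclude $\tau_X(x) \geq m - \epsilon$ for all $x \in X$. Taking the infimum over $x$ gives $\tau_X = \inf_{x \in X} \tau_X(x) \geq m - \epsilon$, which is precisely the claim.

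There is no real obstacle here: the argument is a one-line triangle inequality once one translates local and global reach into distances to the medial axis. The only point requiring a little care is that $\epsilon$-samples are defined with a strict inequality $\|x-e\| < \epsilon$, so the pointwise bound is strict ($\tau_X(x) > m - \epsilon$), but passing to the infimum only preserves the non-strict inequality $\tau_X \geq m - \epsilon$, which is exactly what the lemma asserts.
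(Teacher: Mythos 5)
Your proof is correct and follows essentially the same route as the paper's: both rephrase local and global reach as distance to the medial axis and then apply the triangle inequality to a sample point within $\epsilon$ of $x$. The only cosmetic difference is that the paper picks a specific $x\in X$ realizing $\tau_X$ and a closest $y\in M_X$, whereas you work with an arbitrary $x$ and take infimums at the end; both are valid.
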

\begin{proof}
	Let $x \in X$ be a point that realizes the distance from $X$ to the
	medial axis $M_X$ of $X$ and let $y \in M_X$ be a closest point to $x$
	on $M_X$. This means that $||x-y||=d(x,M_X)=\tau_X$ where
	$d(\cdot,\cdot)$ is the distance function. Since $E$ is an
	$\epsilon$-sample, there is an $e \in E$ with $||e-x|| \leq
	\epsilon$. The claim now follows from \[m \leq \tau_X(e) = d(e,M_X)
	\leq ||e-y|| \leq ||e-x|| + ||x-y|| = ||e-x|| + \tau_X \leq \epsilon +
	\tau_X.\]
\end{proof}

Let $E \subset X$ be a finite $\epsilon$-sample. By
\lemmaref{lemma:reach} and (\ref{eq:localreach}) we have that
$m-\epsilon \leq \tau_X$, if $m = \min_{e \in E} \eta(e)$ where
$\eta(x)^{-1}=7D^{3/2}\mu_{\text{norm}}(F,x)$. This, together with the
sampling method of \secref{sec:sampling}, gives the following
algorithm to bound the reach of $X$ from below.
%

\begin{algorithm} 
	\caption{Bound reach from below.}\label{alg:reach}
	\begin{algorithmic} 
		\REQUIRE Variety $X \subset S^{n-1}$ and $\epsilon_0 >0$.
		\ENSURE Lower bound on $\tau_X$.
		\STATE Let $\epsilon = 2\epsilon_0$ and $m=0$.
		\WHILE{$m-\epsilon \leq 0$}
		\STATE Let $\epsilon=\epsilon/2$.
		\STATE Compute a $\epsilon$-sample $E \subset X$ of $X$.
		\STATE Compute $m=\min_{e \in E} \eta(e)$.
		\ENDWHILE
		\RETURN $m-\epsilon$
	\end{algorithmic}
\end{algorithm}

Let $\eta = \min_{x \in X} \eta(x)$. Since (after the first pass
through the while loop) $\eta \leq m$, the algorithm terminates when
$\epsilon < \eta$. One may try to achieve this after one sampling by
choosing $\epsilon_0$ small enough.



\section{Sampling} \label{sec:sampling}

Let $X \subseteq \RR^n$ be a smooth compact variety of pure dimension
$d \geq 1$. In this section we describe a simple method to compute a
finite sample on $X$ given defining equations for $X$. The sample will
have a guaranteed prescribed density. To compute the sample one may
use tools from numerical algebraic geometry \cite{bertini}.

\subsection{The basic sample}
For $1 \leq k \leq n$ let $T_k$ be the set of subsets of
$\{1,\dots,n\}$ with $k$ elements. For $t=\{t_1,\dots,t_k\} \in T_k$
let $V_t \subseteq \RR^n$ be the $k$-dimensional coordinate plane
spanned by $e_{t_1},\dots,e_{t_k}$ where $e_1,\dots,e_n$ are the
standard basis vectors of $\RR^n$. Let $\delta>0$ and consider the
grid $G_t(\delta) \subset V_t$ which is equivalent to the grid $\delta
\ZZ^k \subset \RR^k$.
Let $\pi_t:\RR^n \rightarrow V_t$ be the projection. We then have the
basic sample
\begin{equation} \label{eq:sample}
E_{\delta} = \bigcup_{\substack{t \in T_d \\ g \in G_t(\delta)}} X\cap
\pi_t^{-1}(g).
\end{equation}
The basic sample $E_{\delta}$ consists of intersections between $X$
and complementary dimensional linear spaces $\pi_t^{-1}(g)$ ranging
over $t \in V_t$ and $g \in G_t(\delta)$. The linear spaces are made
up of the complementary dimensional faces of a cubical tessellation
with side length $\delta$. If the grids $G_t(\delta)$ are modified by a
general perturbation this sample is finite. In practice we let the
perturbation be random and simply translate the grids by a random
vector.

\begin{ex} \label{ex:curve}
In this example we illustrate the basic sample $E_{\delta}$ in the
case of a plane curve $X \subset \RR^2$. The basic sample consists of
the intersection of $X$ with a square grid with side length $\delta$
filling the plane. To acquire the grid we perturb the standard grid
(which has a vertex at the origin) by a random vector. An example
curve and its intersection with the grid may be seen in
\figref{fig:curve-gridlines}. Let $\pi_1$ and $\pi_2$ be the projections to
the $x$-axis and $y$-axis, respectively. The grid lines may be split
in vertical and horizontal lines. The vertical lines correspond to
$\pi_1^{-1}(g)$ with $g$ in a grid of size $\delta$ on the $x$-axis,
see \figref{fig:curve-vert}. Similarly, \figref{fig:curve-hor} shows
the intersection of $X$ with the horizontal lines corresponding to
$\pi_2^{-1}(g)$ with $g$ in a grid on the $y$-axis.
\begin{figure}[ht]
\centering
\subfloat[][Basic sample.\label{fig:curve-gridlines}]{
  \includegraphics[trim={0cm 0cm 0cm 0cm},clip,scale=0.33]
                  {curve_tot.pdf}}
\;
\subfloat[][Vertical grid lines.\label{fig:curve-vert}]{
  \includegraphics[trim={0cm 0cm 0cm 0cm},clip,scale=0.33]
                  {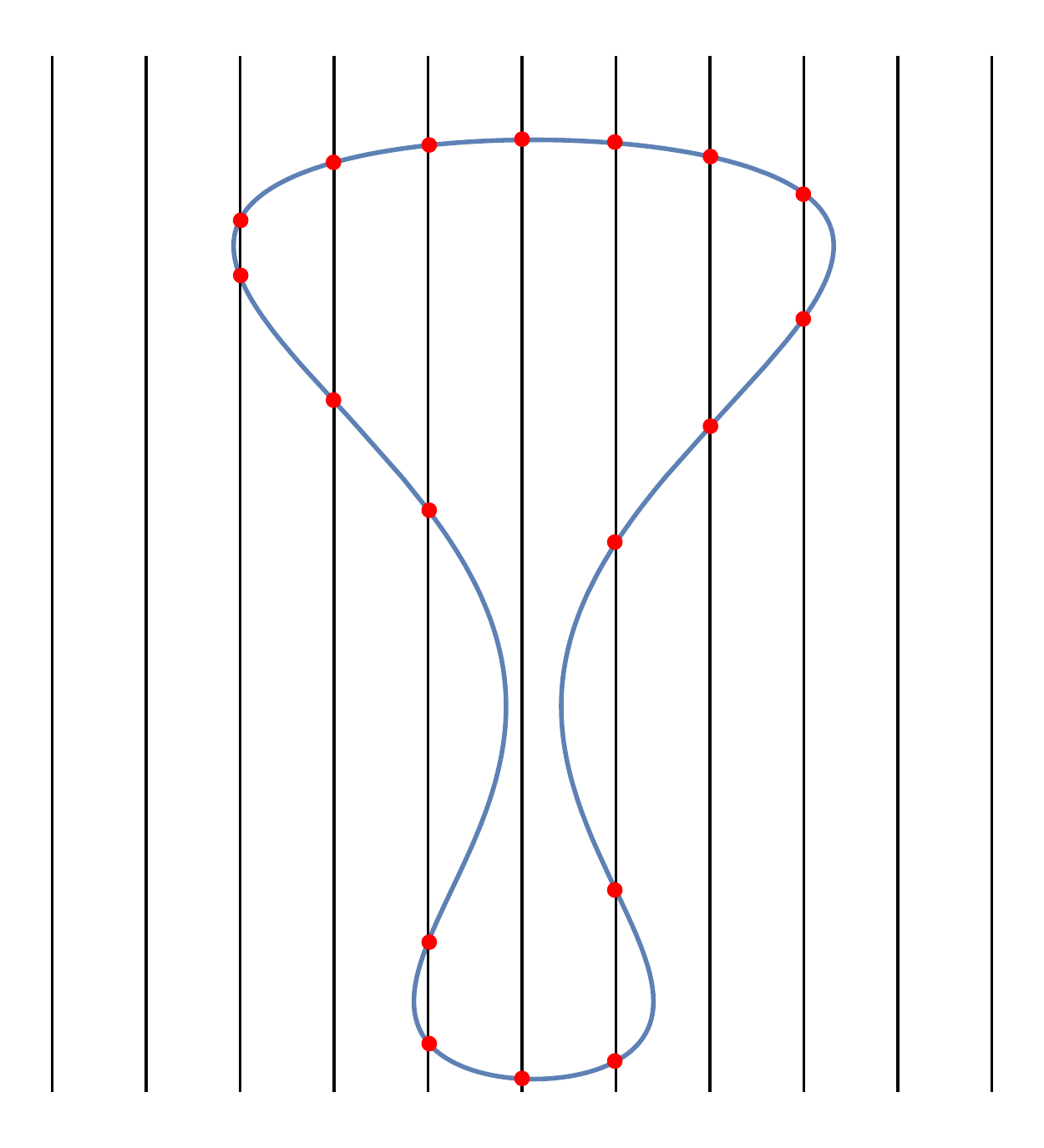}}
\;
\subfloat[][Horizontal grid lines.\label{fig:curve-hor}]{
  \includegraphics[trim={0cm 0cm 0cm 0cm},clip,scale=0.33]
                  {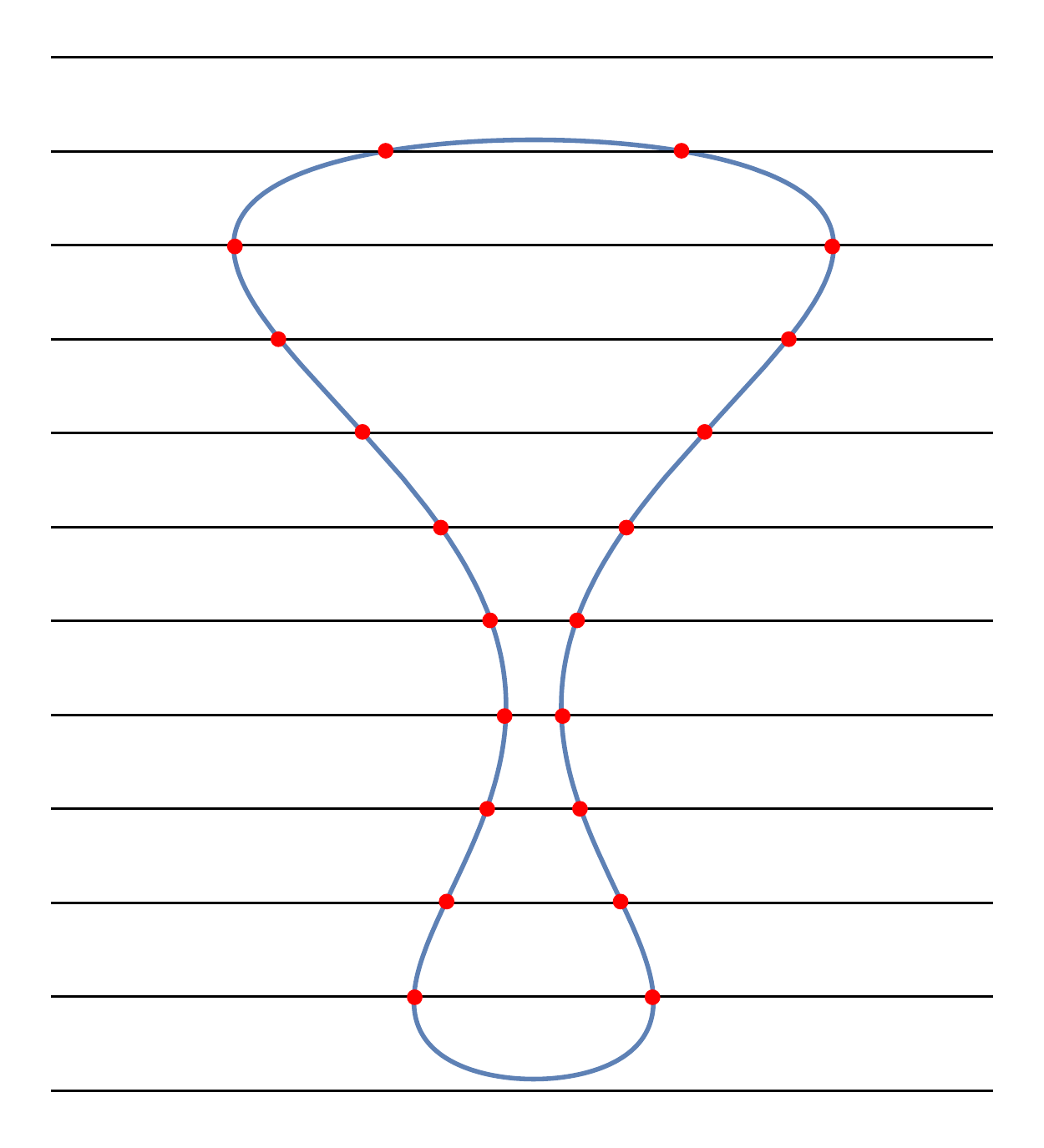}}
\caption{Basic sample of planar curve.
\label{fig:planar-curve}}
\end{figure}
\end{ex}

\begin{ex} \label{ex:surf}
As another example consider the surface $X \subset \RR^3$ shown in
\figref{fig:surf_grid}. The figure also shows the 1-dimensional
skeleton of a cubical complex, or tessellation, filling $\RR^3$. The
tessellation is a random translation of the standard cubical
tessellation with cube side length $\delta > 0$ and the image is shown
from an angle for better visualization. The 1-dimensional skeleton is
the union of all the edges of the cubes in the complex and the basic
sample $E_{\delta}$ is the intersection of $X$ with the 1-dimensional
skeleton. As mentioned above we have in general that the basic sample
$E_{\delta}$ of a variety $X \subset \RR^n$ is the intersection of $X$
with the $\codim{X}$-skeleton of a perturbed cubical complex. In the case of a surface in $\RR^3$, this
intersection may be done by intersecting $X$ with three families of
lines as shown in Figures~\ref{fig:surf_xygrid},
\ref{fig:surf_xzgrid} and \ref{fig:surf_yzgrid}, respectively. All
the lines together are shown in \figref{fig:surf_fullgrid}. These
lines can be expressed in terms of projections as follows. Let
$\pi_{12}$, $\pi_{13}$ and $\pi_{23}$ be the projections to the
$xy$-plane, the $xz$-plane and the $yz$-plane, respectively. Then the
lines in \figref{fig:surf_xygrid} are given by $\pi^{-1}_{12}(g)$
with $g$ in a two-dimensional grid in the $xy$-plane like the one
shown in \figref{fig:curve-gridlines}. Similarly, the lines in
Figures~\ref{fig:surf_xzgrid} and \ref{fig:surf_yzgrid} may be
expressed using $\pi_{13}$ and $\pi_{23}$, respectively.

\begin{figure}[ht]
\centering
\subfloat[][\label{fig:surf_fullgrid}]{
  \includegraphics[trim={0cm 0cm 0cm 0cm},clip,scale=0.25]
                  {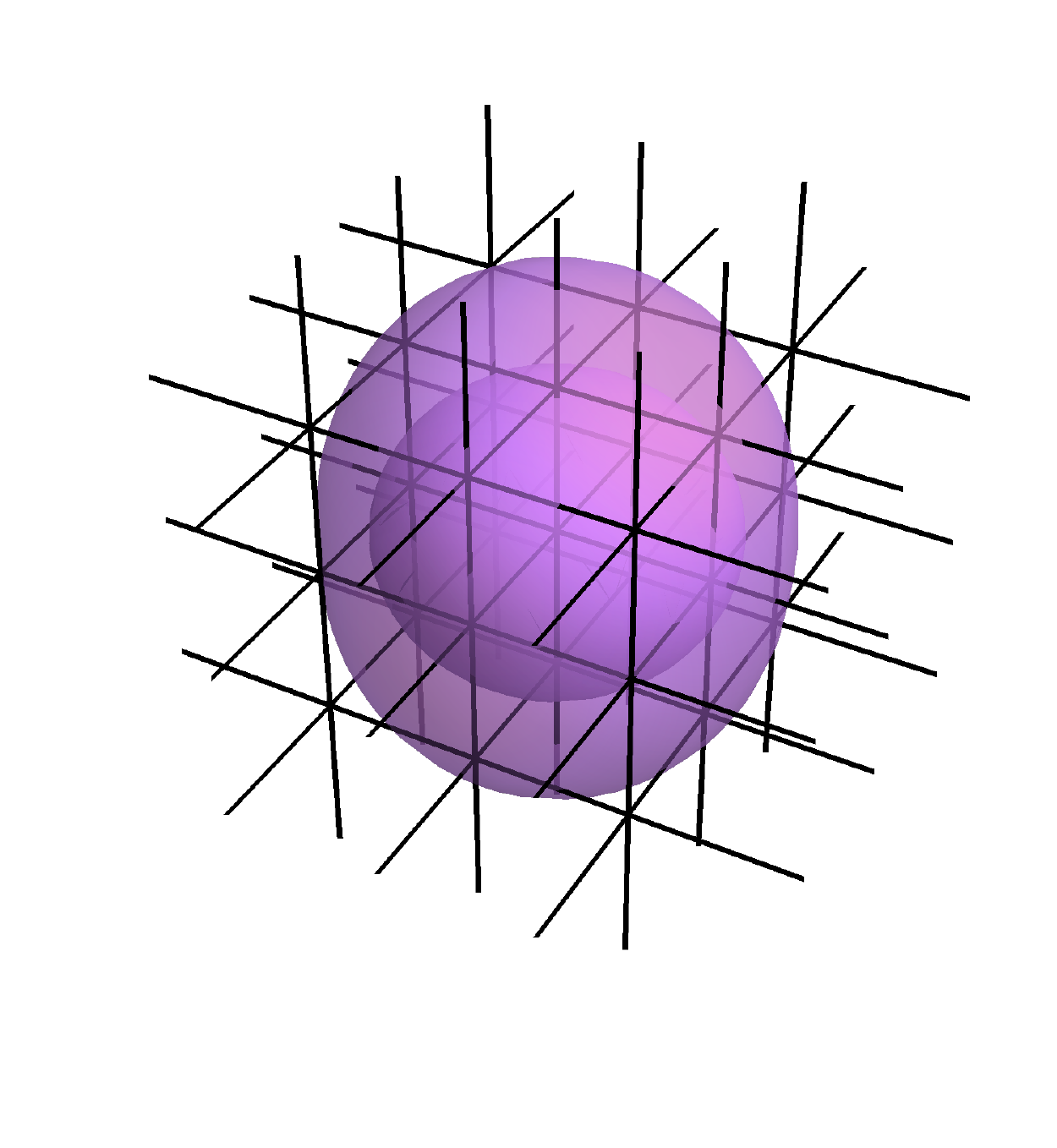}}
\;
\subfloat[][\label{fig:surf_xygrid}]{
  \includegraphics[trim={0cm 0cm 0cm 0cm},clip,scale=0.25]
                  {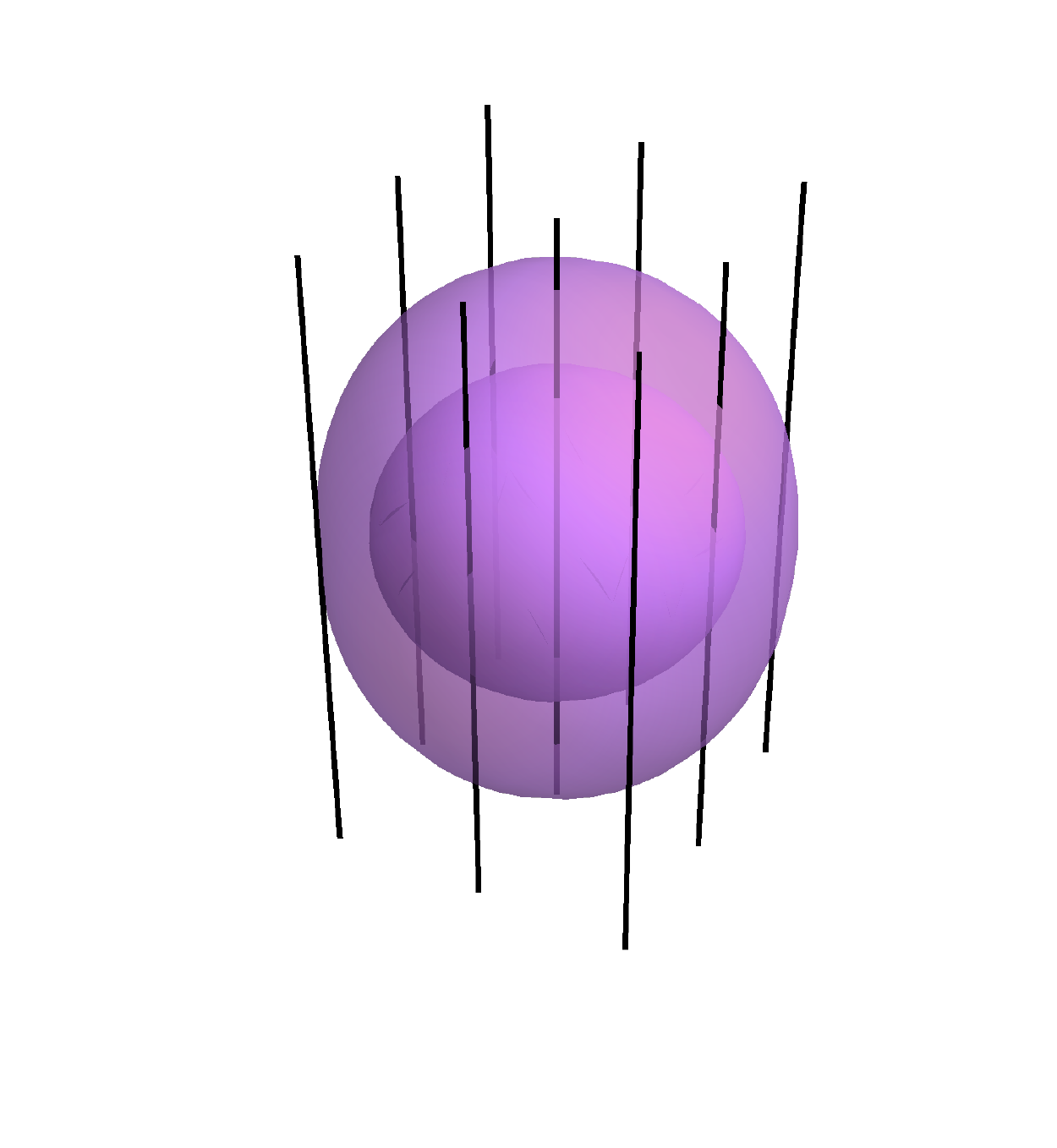}}
\;
\subfloat[][\label{fig:surf_xzgrid}]{
  \includegraphics[trim={0cm 0cm 0cm 0cm},clip,scale=0.25]
                  {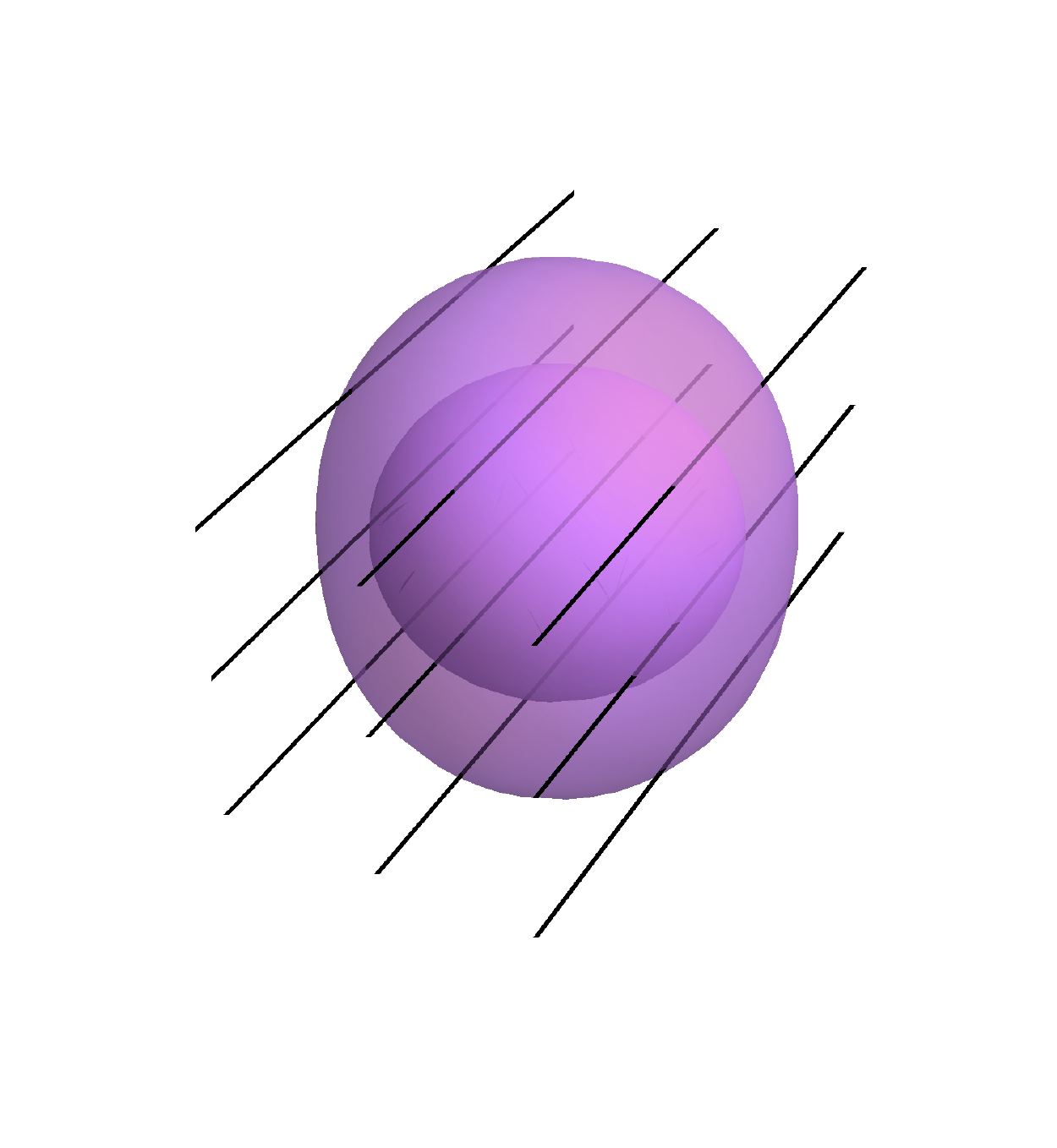}}
\;
\subfloat[][\label{fig:surf_yzgrid}]{
  \includegraphics[trim={0cm 0cm 0cm 0cm},clip,scale=0.25]
                  {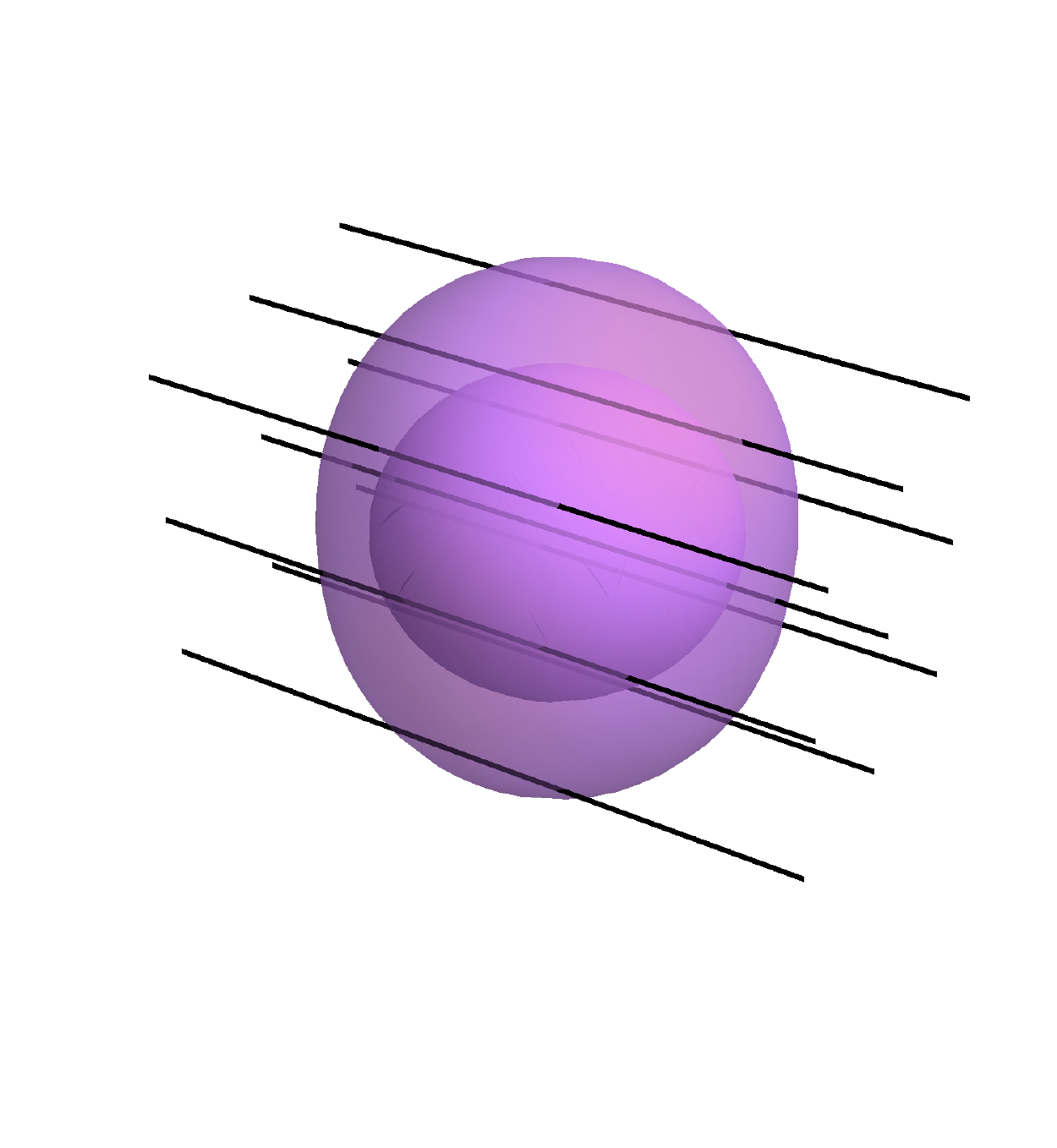}}
\caption{Basic sample of a surface $X \subset \RR^3$ as intersection
  of $X$ with the 1-dimensional skeleton of a cubic tessellation.
\label{fig:surf_grid}}
\end{figure}
\end{ex}

\subsection{The extra sample} \label{sec:extra}
In the case $\dim{X} > 1$ we need to complement the basic sample with
an extra sample $E'_{\delta}$ to guarantee density. We will do this
without knowing the reach of $X$.
For each $1 \leq k < d$ we have an extra sample given by
\begin{equation} \label{eq:extra}
  E'_{\delta,k} = \bigcup_{\substack{t \in T_k \\ g \in G_t(\delta)}} I(X\cap \pi_t^{-1}(g), q),
\end{equation}
where $I(X\cap \pi_t^{-1}(g), q)$ is the normal locus discussed in
\secref{sec:edd} and $q \in \RR^n$ is generic. The total extra sample
$E'_{\delta}$ is the union of $E'_{\delta,k}$ for $1 \leq k \leq d-1$
and the complete sample of $X$ is $$S_{\delta}=E_{\delta} \cup
E'_{\delta}$$

Again, the grid $G_t(\delta)$ is perturbed by a random translation
which guarantees with probability 1 that the intersections $X\cap
\pi_t^{-1}(g)$ are smooth for all $g \in G_t(\delta)$. Recall that for
every connected component $Y$ of $X\cap \pi_t^{-1}(g)$, $I(X\cap
\pi_t^{-1}(g), q)$ contains a point on $Y$. This means that the extra
sample $E'_{\delta}$ has points of every connected component of $X\cap
\pi_t^{-1}(g)$ for $g \in G_t(\delta)$, $t \in T_k$ and $1 \leq k \leq
d-1$. This will be used below to show density properties of the total
sample $S_{\delta}$.

\begin{rem}
An alternative to introducing the extra sample would be to use only
the basic sample but compute the reach of $X$ first and let $\delta$
be small with respect to the reach. The details of showing density
requirements for such a simple sampling procedure are yet to be
carried out. Moreover the complexity involved in the computation of the
reach and in particular the computation of the maximal curvature of $X,$ in
the sense explained in \secref{sec:edd},  is to our knowledge still to be explored.
\end{rem}

In the case of curves there is no extra sample. For surfaces, the
extra sample is constructed by intersecting $X$ with hyperplanes
parallel to the coordinate axes and computing normal loci of the
resulting curves. In higher dimension we must intersect $X$ with
hyperplanes, (codimension 2) planes and so on all the way down to planes
of dimension $n-\dim{X}+1$.



\begin{ex} \label{ex:surf-extra}
  Continuing \exref{ex:surf} we illustrate the extra sample in the
  case of a surface $X \subset \RR^3$. The extra sample $E'_{\delta}$
  is acquired by intersecting $X$ with a number of planes $P \subset
  \RR^3$ and computing the normal locus $I(X\cap P, q)$ of the
  resulting curve. As mentioned above, the normal locus has a point on
  every connected component of $X \cap P$. The planes come in three
  families of planes parallel to the $xy$-plane, $xz$-plane and
  $yz$-plane respectively, see \figref{fig:surf_planes}. The distance
  between adjacent planes in the same family is the density parameter
  $\delta>0$. To tie this to (\ref{eq:extra}), let $\pi_1$ be the
  projection to the $x$-axis and consider for instance the planes
  parallel to the $yz$-axis. These are expressed in (\ref{eq:extra})
  as $\pi_1^{-1}(g)$ for $g \in G_1(\delta)$ in a grid of size
  $\delta$ on the $x$-axis.
\begin{figure}[ht]
\centering
\subfloat[][\label{fig:surf_yplanesall}]{
  \includegraphics[trim={0cm 0cm 0cm 0cm},clip,scale=0.2]
                  {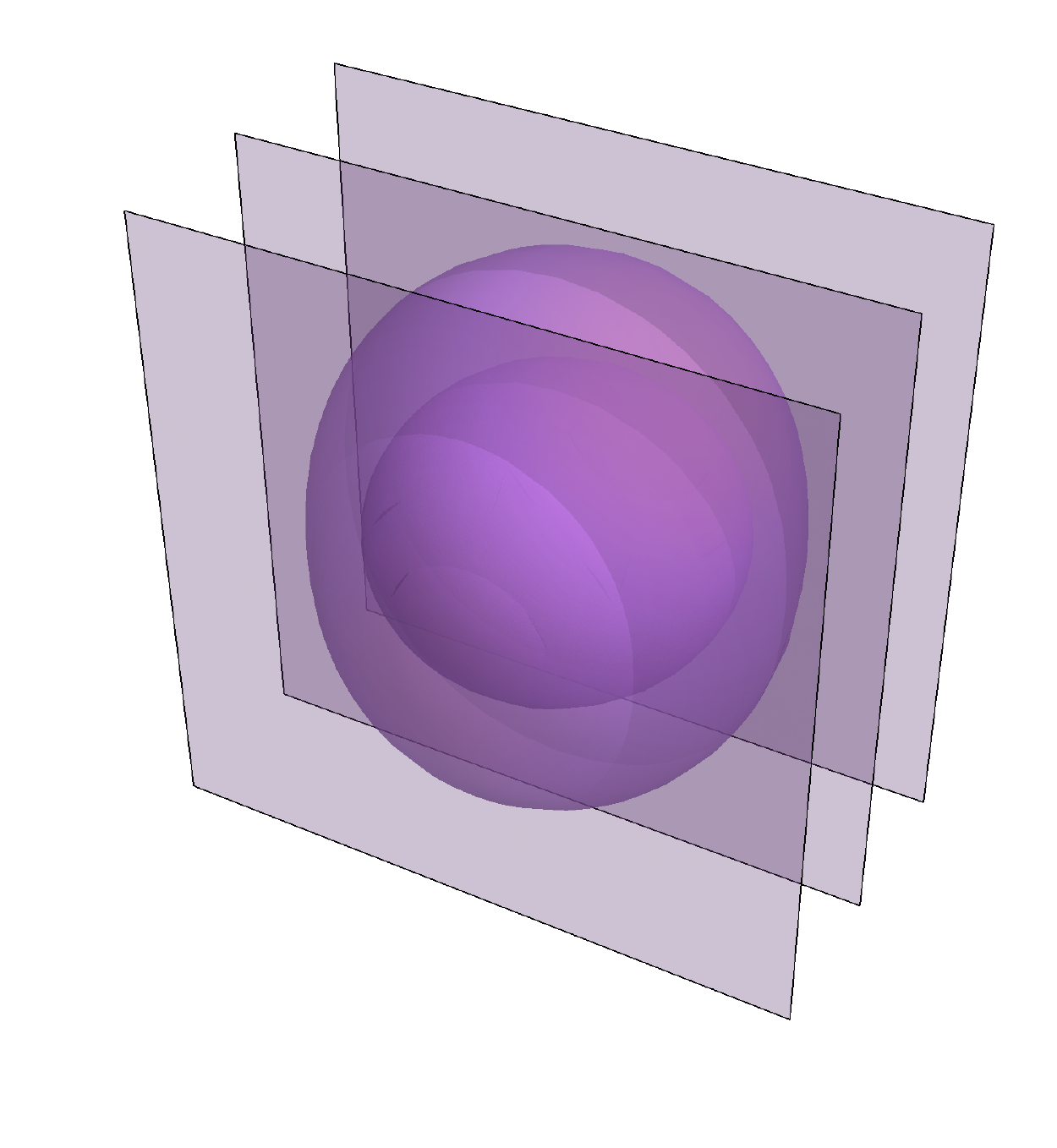}}
\;
\subfloat[][\label{fig:surf_yplanes1}]{
  \includegraphics[trim={0cm 0cm 0cm 0cm},clip,scale=0.2]
                  {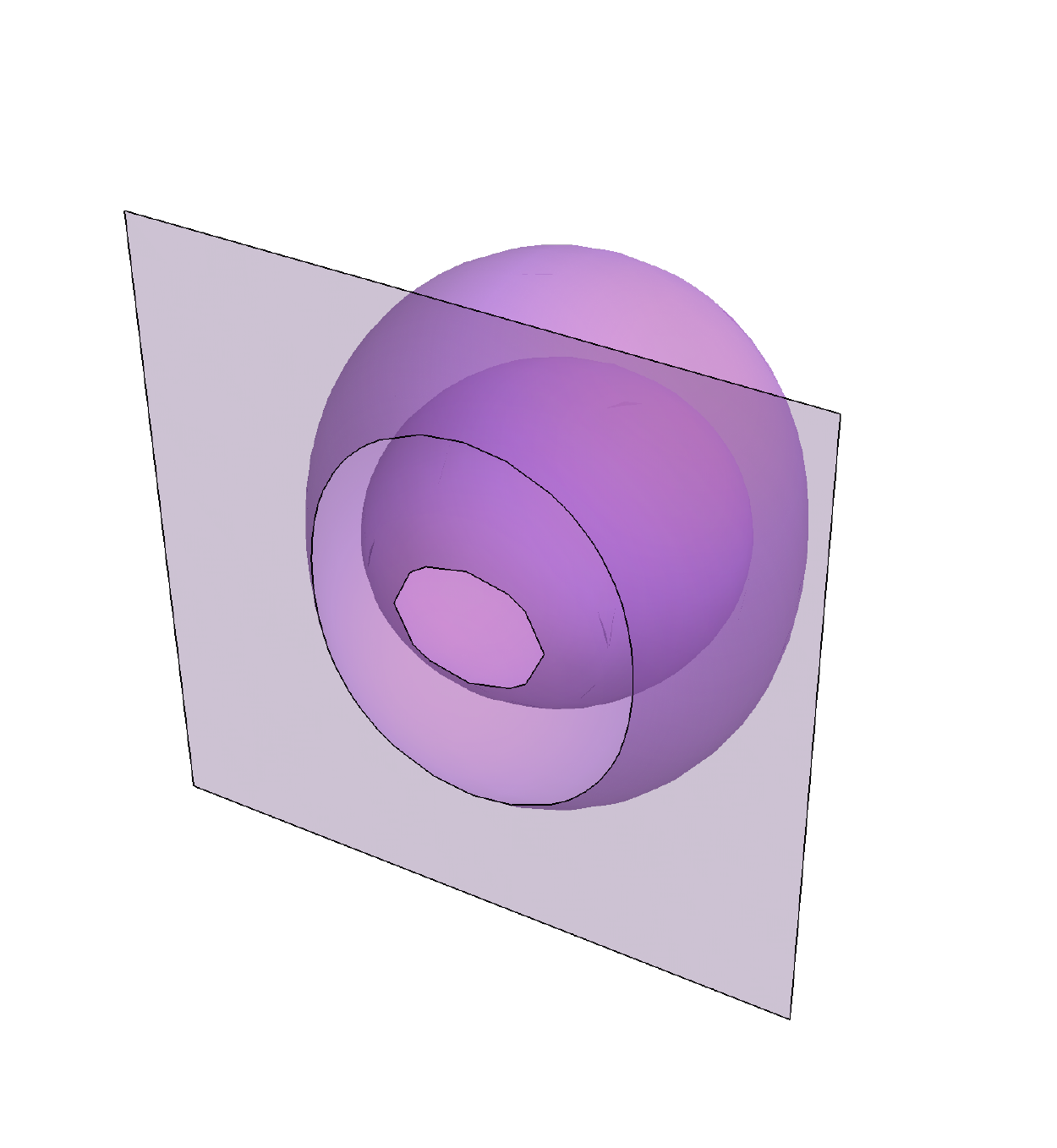}}
\;
\subfloat[][\label{fig:surf_yplanes2}]{
  \includegraphics[trim={0cm 0cm 0cm 0cm},clip,scale=0.2]
                  {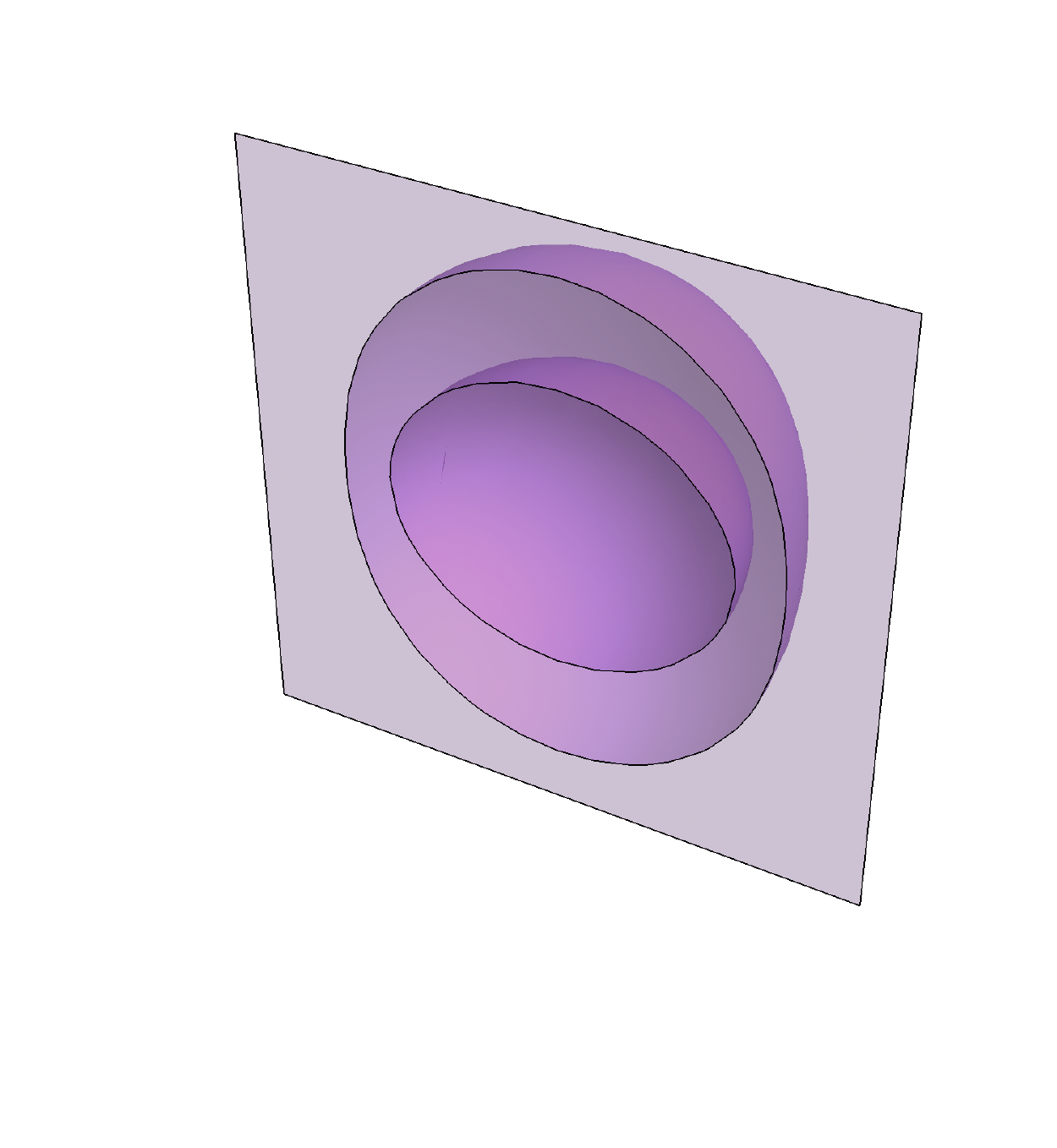}}
\;
\subfloat[][\label{fig:surf_yplanes3}]{
  \includegraphics[trim={0cm 0cm 0cm 0cm},clip,scale=0.2]
                  {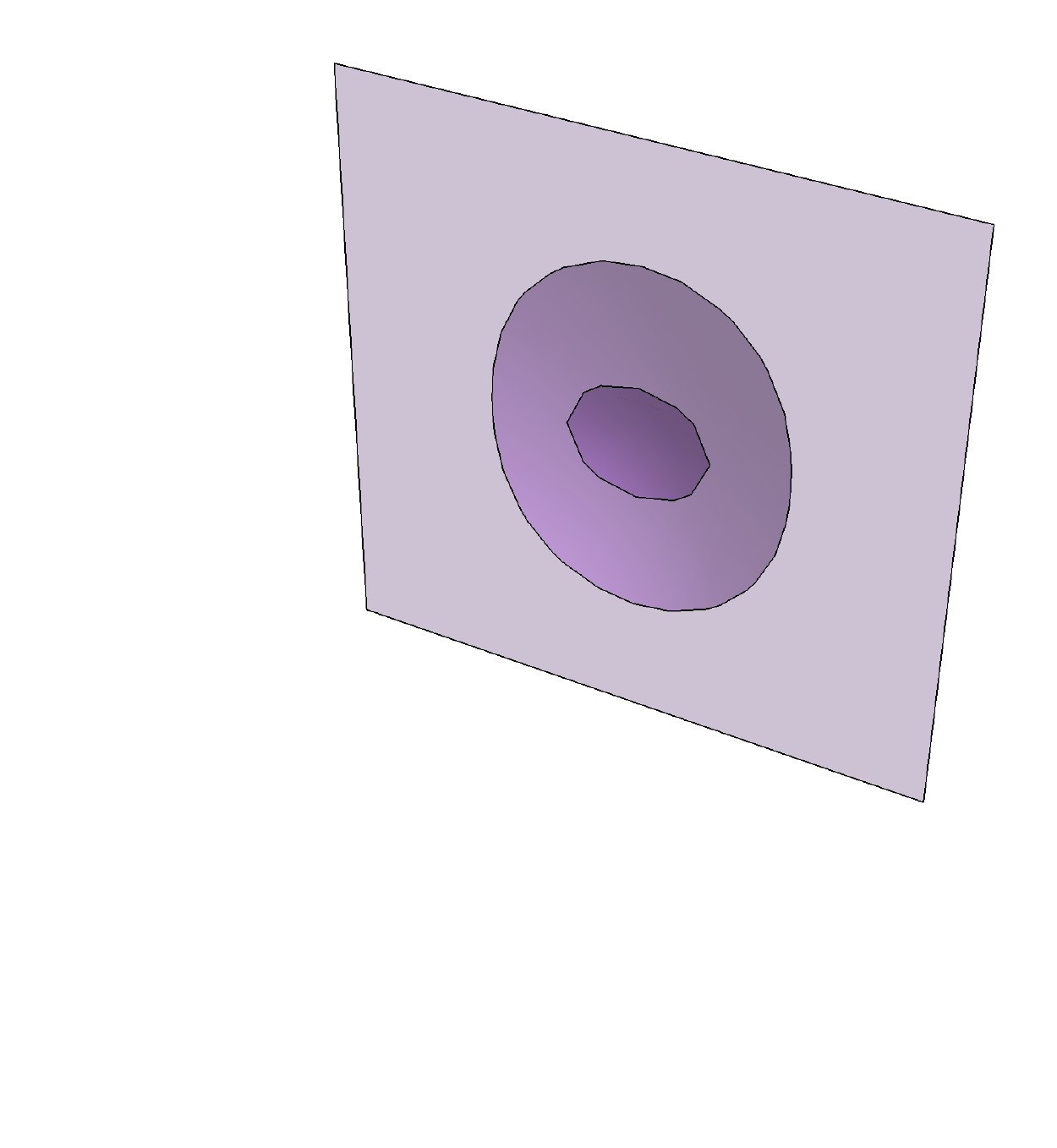}}
\;

\subfloat[][\label{fig:surf_xplanesall}]{
  \includegraphics[trim={0cm 0cm 0cm 0cm},clip,scale=0.2]
                  {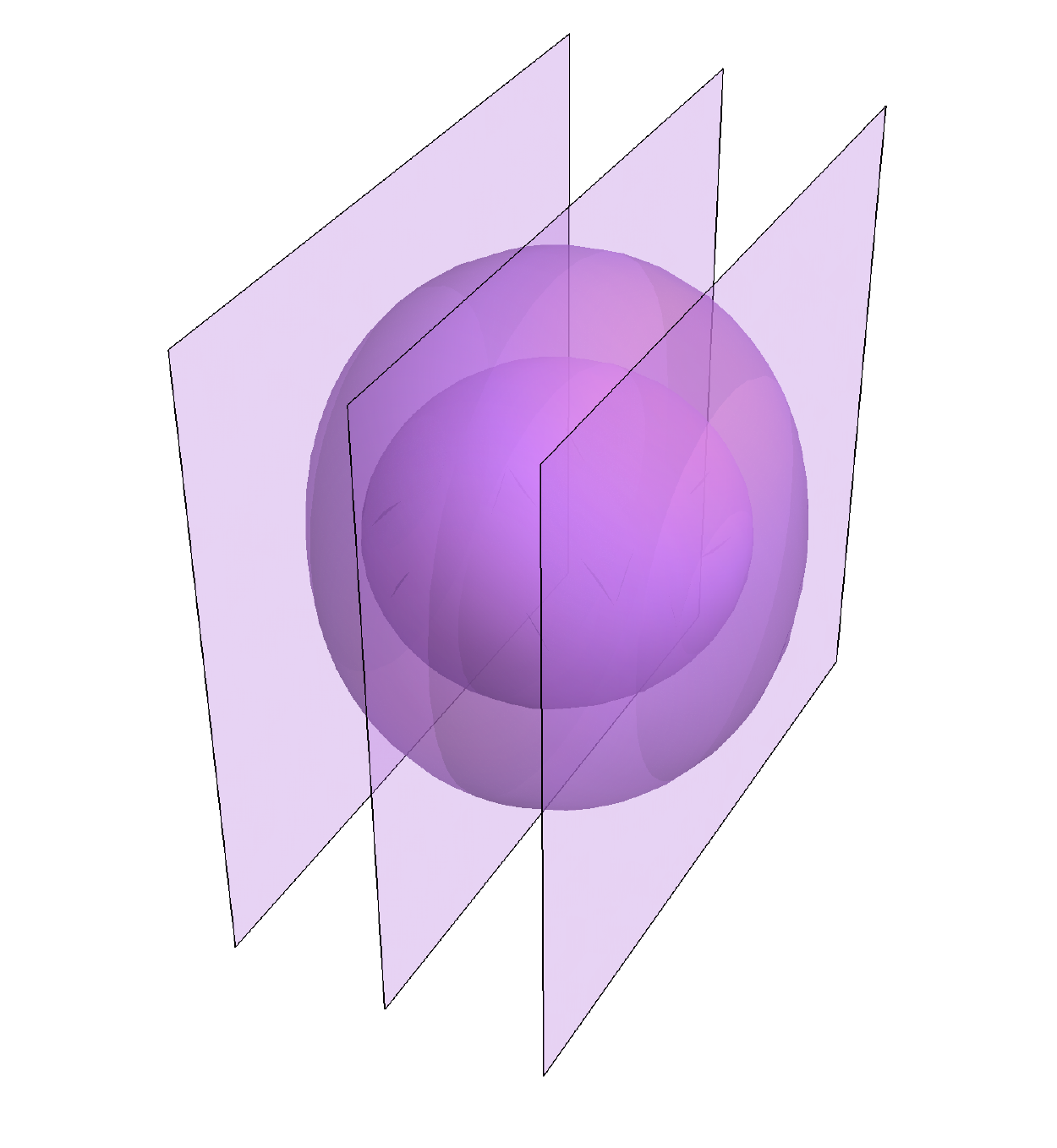}}
\;
\subfloat[][\label{fig:surf_xplanes1}]{
  \includegraphics[trim={0cm 0cm 0cm 0cm},clip,scale=0.2]
                  {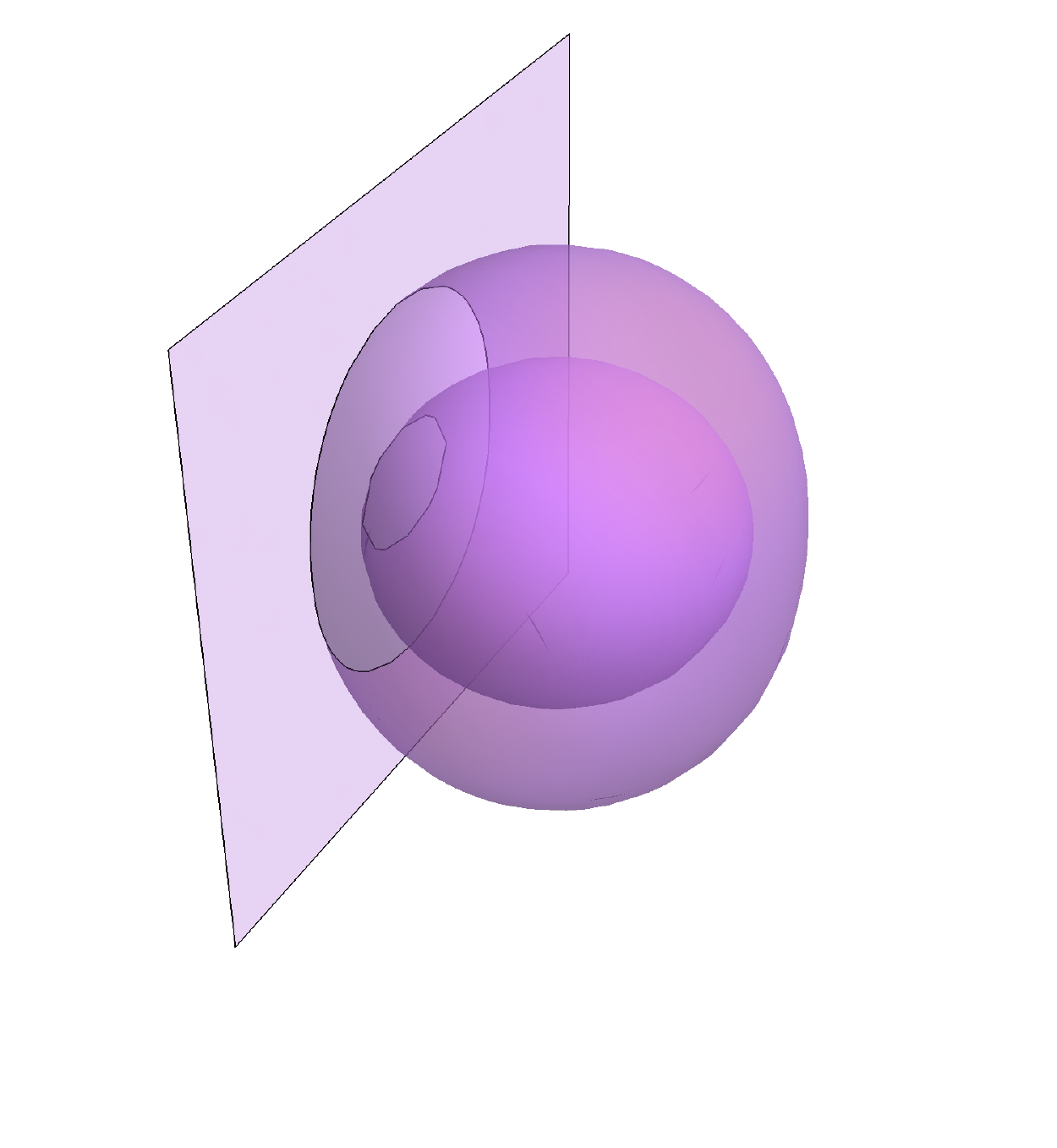}}
\;
\subfloat[][\label{fig:surf_xplanes2}]{
  \includegraphics[trim={0cm 0cm 0cm 0cm},clip,scale=0.2]
                  {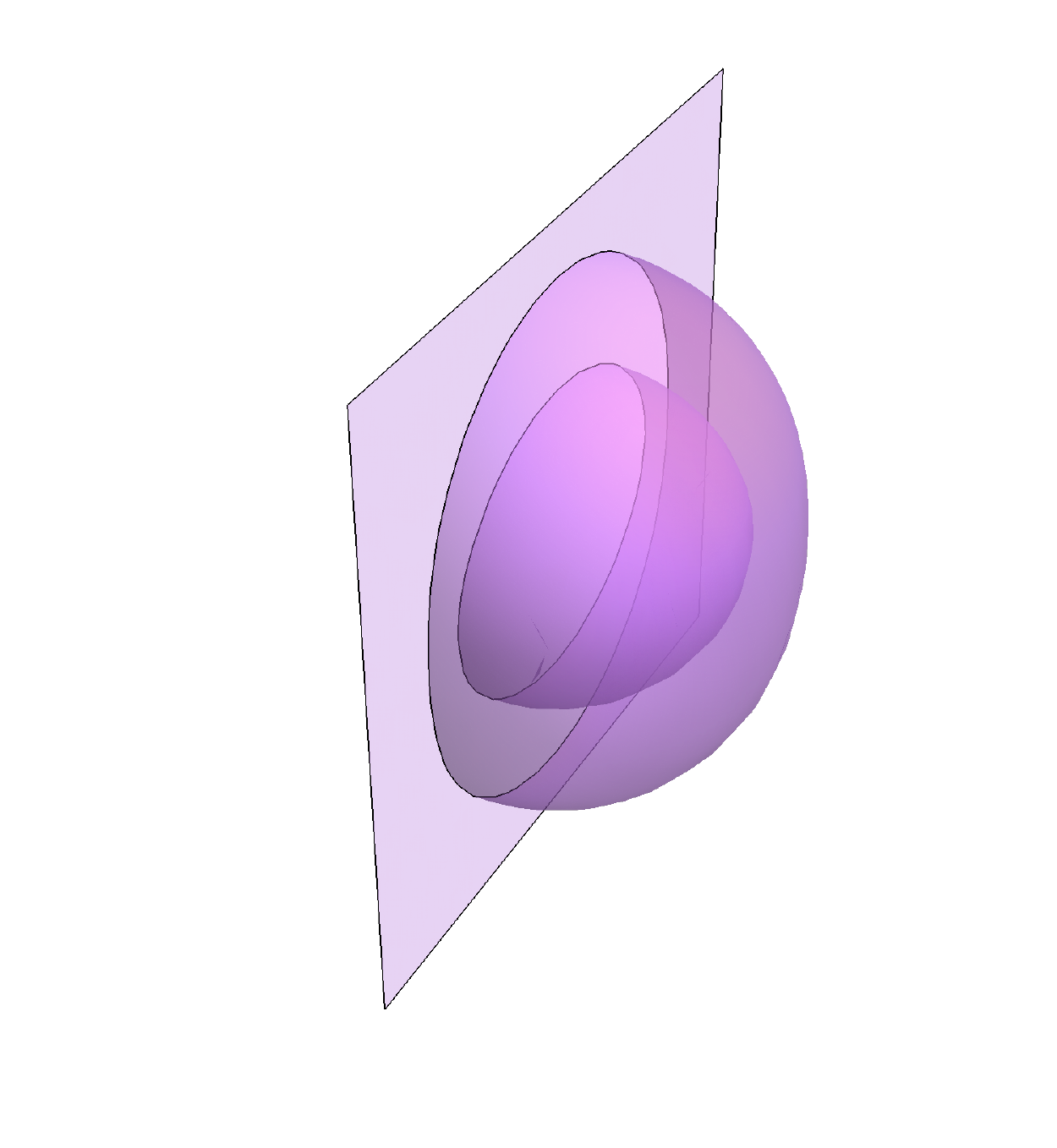}}
\;
\subfloat[][\label{fig:surf_xplanes3}]{
  \includegraphics[trim={0cm 0cm 0cm 0cm},clip,scale=0.2]
                  {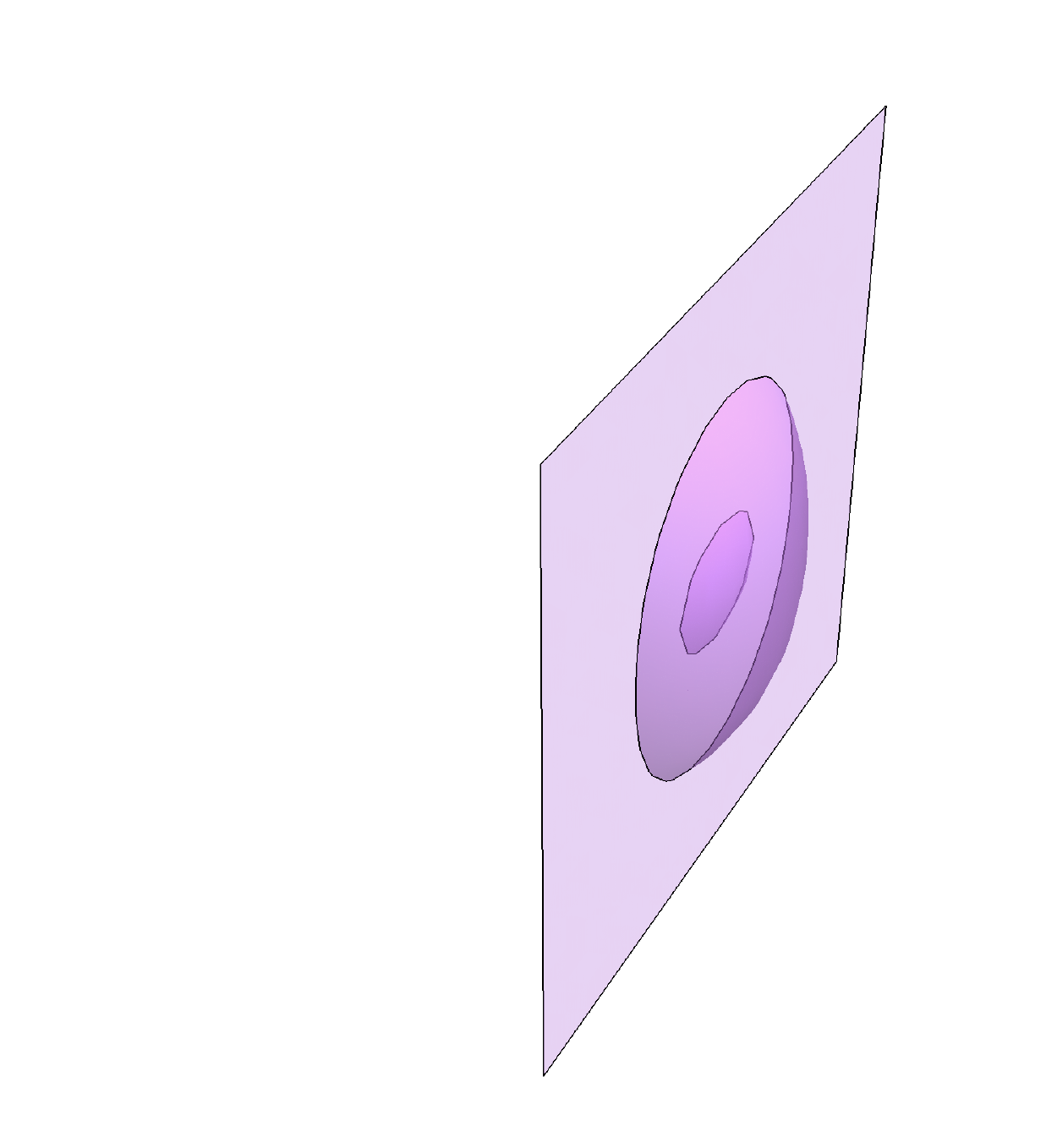}}
\;

\subfloat[][\label{fig:surf_zplanesall}]{
  \includegraphics[trim={0cm 0cm 0cm 0cm},clip,scale=0.2]
                  {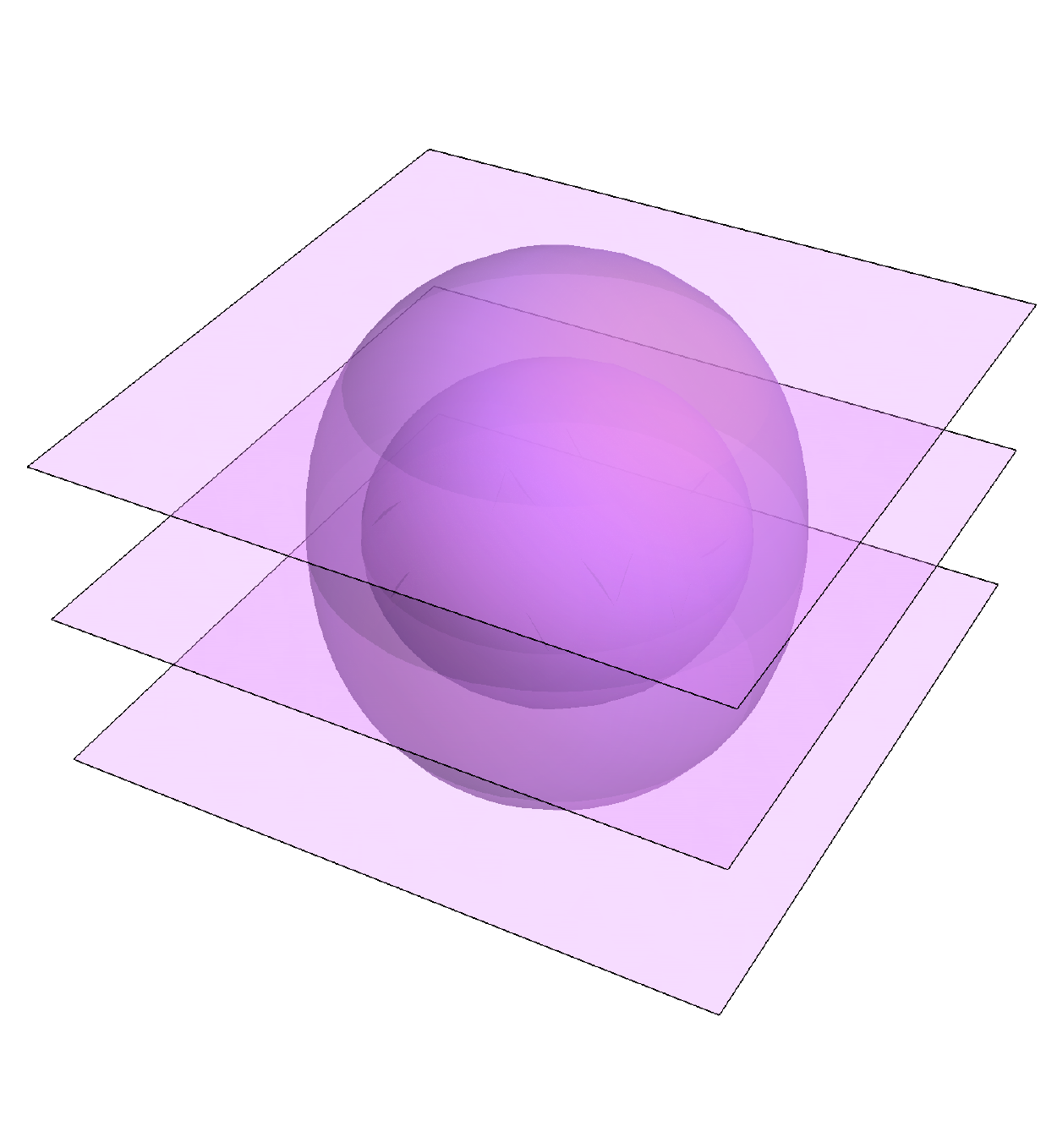}}
\;
\subfloat[][\label{fig:surf_zplanes1}]{
  \includegraphics[trim={0cm 0cm 0cm 0cm},clip,scale=0.2]
                  {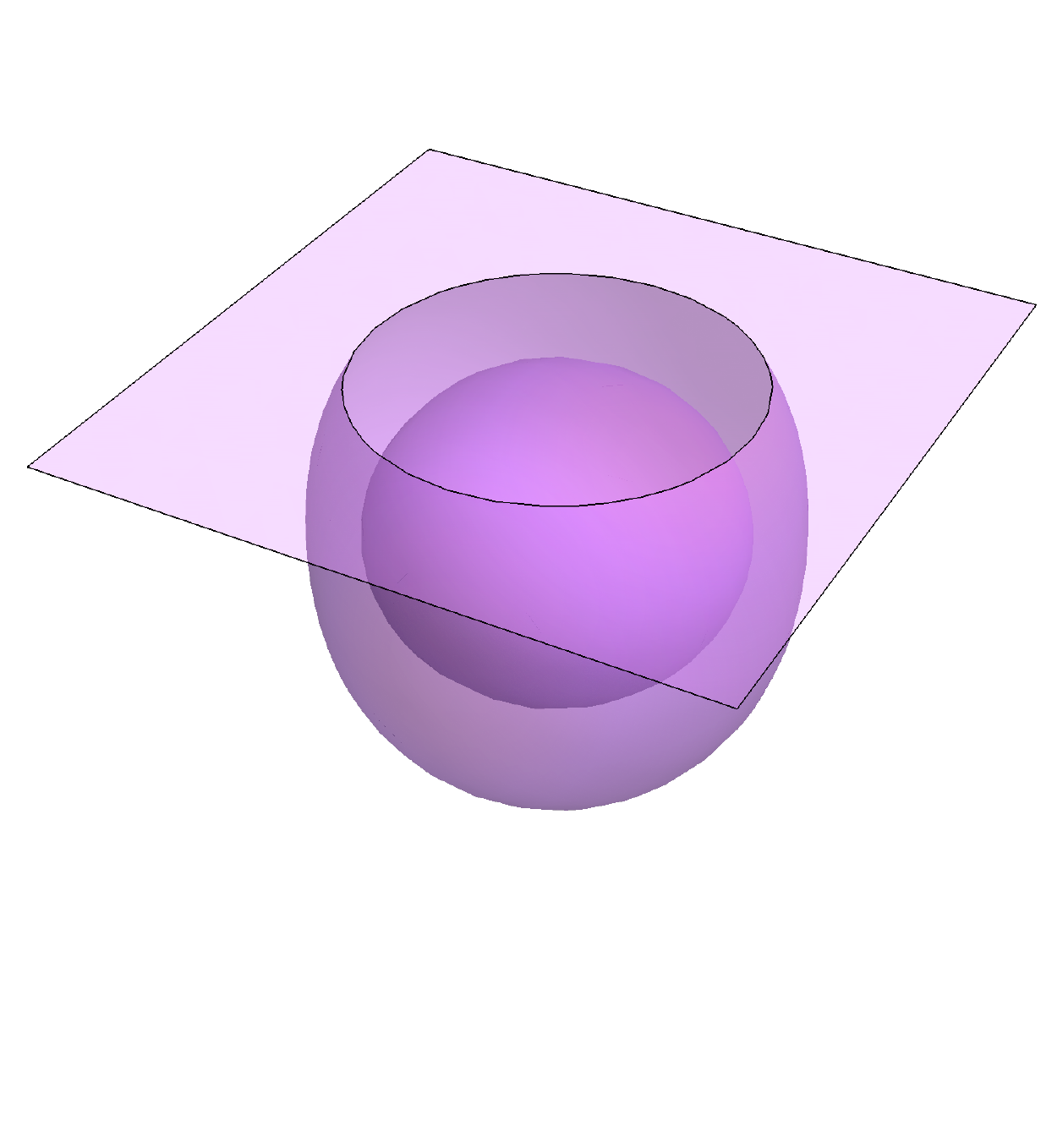}}
\;
\subfloat[][\label{fig:surf_zplanes2}]{
  \includegraphics[trim={0cm 0cm 0cm 0cm},clip,scale=0.2]
                  {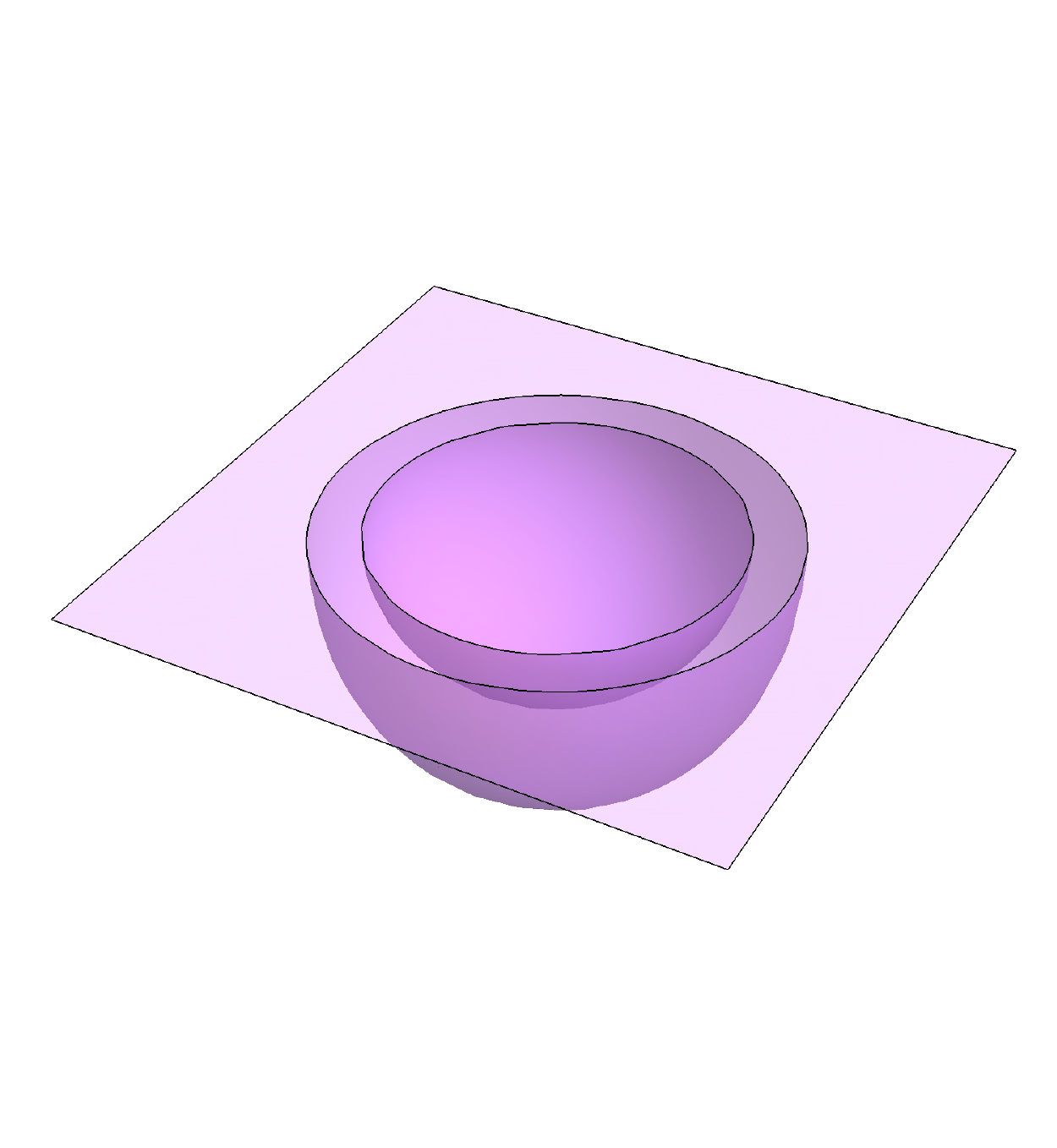}}
\;
\subfloat[][\label{fig:surf_zplanes3}]{
  \includegraphics[trim={0cm 0cm 0cm 0cm},clip,scale=0.2]
                  {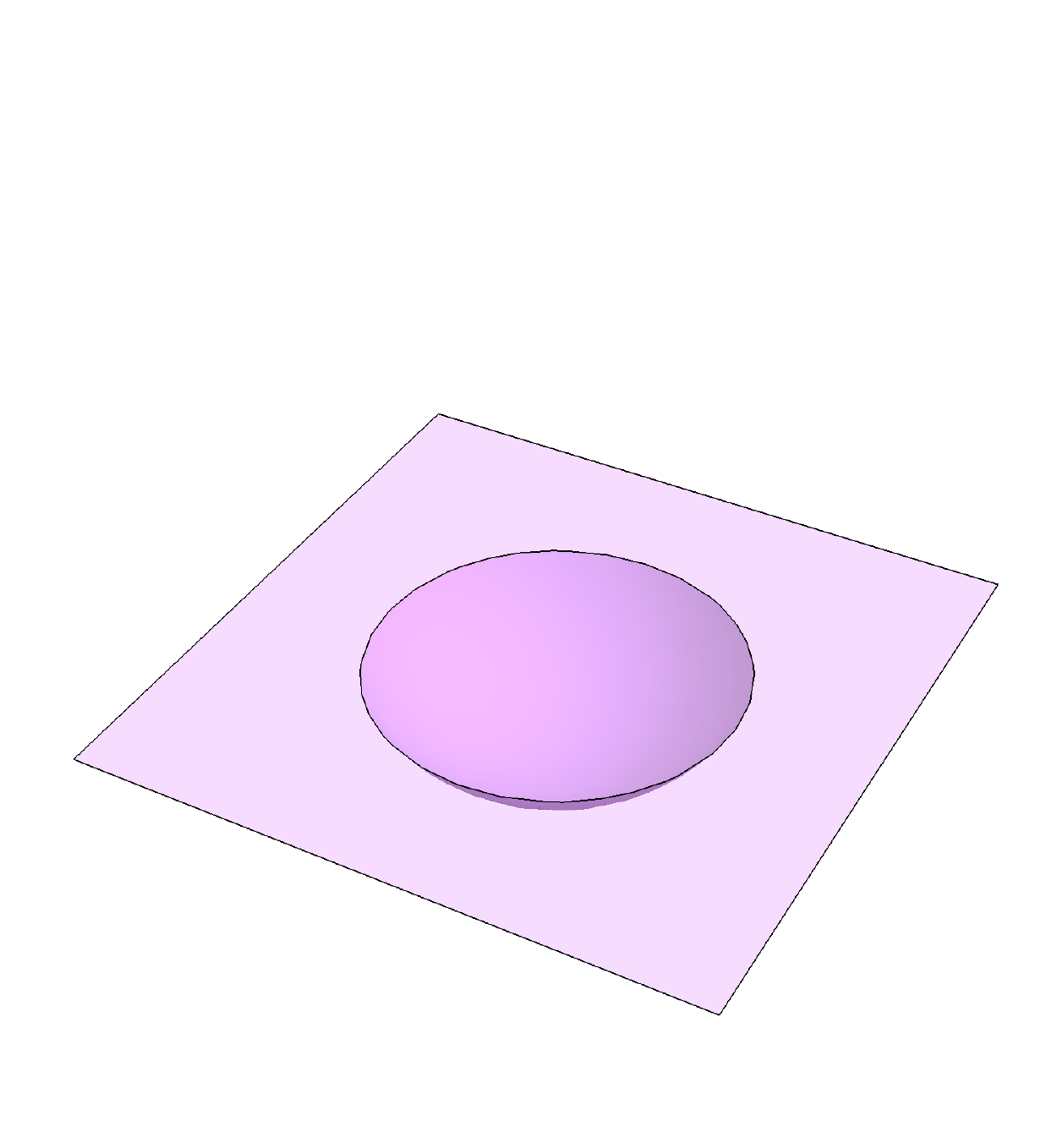}}
\caption{Extra sample of a surface in $\RR^3$.
\label{fig:surf_planes}}
\end{figure}
\end{ex}

\begin{ex}
	In this example we illustrate \algref{alg:reach} from Section \ref{sec:local_reach} by applying it to a simple quadric $X
	\subset S^3 \subset \RR^4$ defined by $xy+y^2-2zw=0$ and
	$x^2+y^2+z^2+w^2=1$. We first
	compute a coarse sample of $X$ (see \figref{fig:quadric}) and evaluate
	the minimum of $\eta(x)$ over the sample. The result is a bit more
	than $0.024$.
	\begin{figure}[ht]
		\centering
		\includegraphics[trim={2cm 3.5cm 2cm 3cm},clip,scale=0.5]{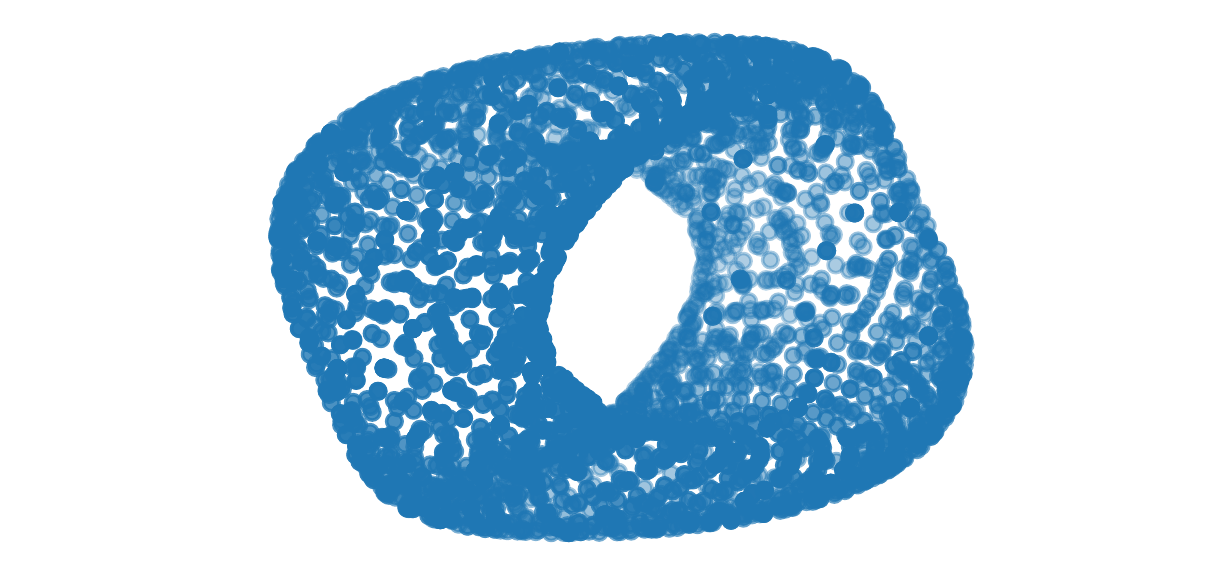}
		\caption{Quadric in $S^3$ projected to $\RR^3$.
			\label{fig:quadric}}
	\end{figure}
	To try to show $0.01 < \tau_X$ we run \algref{alg:reach} with
	$\epsilon_0=0.014$ whereby it terminates after producing one sample
	$E \subset X$. Recall that $E=E_{\delta} \cup E'_{\delta}$, where
	the grid size $\delta$ was chosen to be
	$\delta=\epsilon_0/\sqrt{4}=0.007$.
	\begin{itemize}
		\item Basic sample size: $|E_{\delta}|=780652$
		\item Extra sample size: $|E'_{\delta}|=3592$
		\item Estimated local reach: $0.024 < \min_{e \in E} \eta(e) < 0.0246$.
	\end{itemize}
	The result is that $\tau_X > 0.024-0.014=0.01$.
\end{ex}

\subsection{Density guarantees}

In this section we show properties guaranteeing density of the sample
$S_{\delta}$ described above.

Consider first a surface in $\RR^3$ as in \exref{ex:surf}. The extra
sample $E'_{\delta}$ is introduced to guarantee density as we will now
illustrate. Consider the cubical tessellation underlying the grid and
the 1-dimensional skeleton shown in \figref{fig:surf_fullgrid}. If $X$
intersects a cube $\pi$ in the tessellation, we want to have that there is a
sample point close by, for example on the boundary $\partial \pi$ of
$\pi$. This happens if the basic sample $E_{\delta}$ contains a
point on $\partial \pi$. On the other hand, that $E_{\delta}$ has no
point on $\partial \pi$ means that the surface $X$ does not intersect
any edge of $\pi$. Suppose that we in addition know that no component of
$X$ is contained in $\pi$. Then $X$ must intersect the boundary of $\pi$
and the intersection curve has a component $Y$ that is completely
contained in one of the faces of $\pi$. See \figref{fig:squeezedsurf}
for an illustration. Moreover, $Y$ is a connected component of one of
the plane curves $X\cap P$ where $P=\pi_t^{-1}(g)$ is one of the
planes occurring in the construction of the extra sample
(\ref{eq:extra}). As explained above, the extra sample picks up a
point on $Y$ which gives a sample point on $\partial \pi$.

We make sure that $X$ has no connected components completely contained
in $c$ by choosing the grid size $\delta$ small with respect to the
narrowest bottleneck of $X$.

\begin{figure}[ht]
  \centering
\includegraphics[trim={1.2cm 1.2cm 1.2cm 1.2cm},clip,scale=0.5]{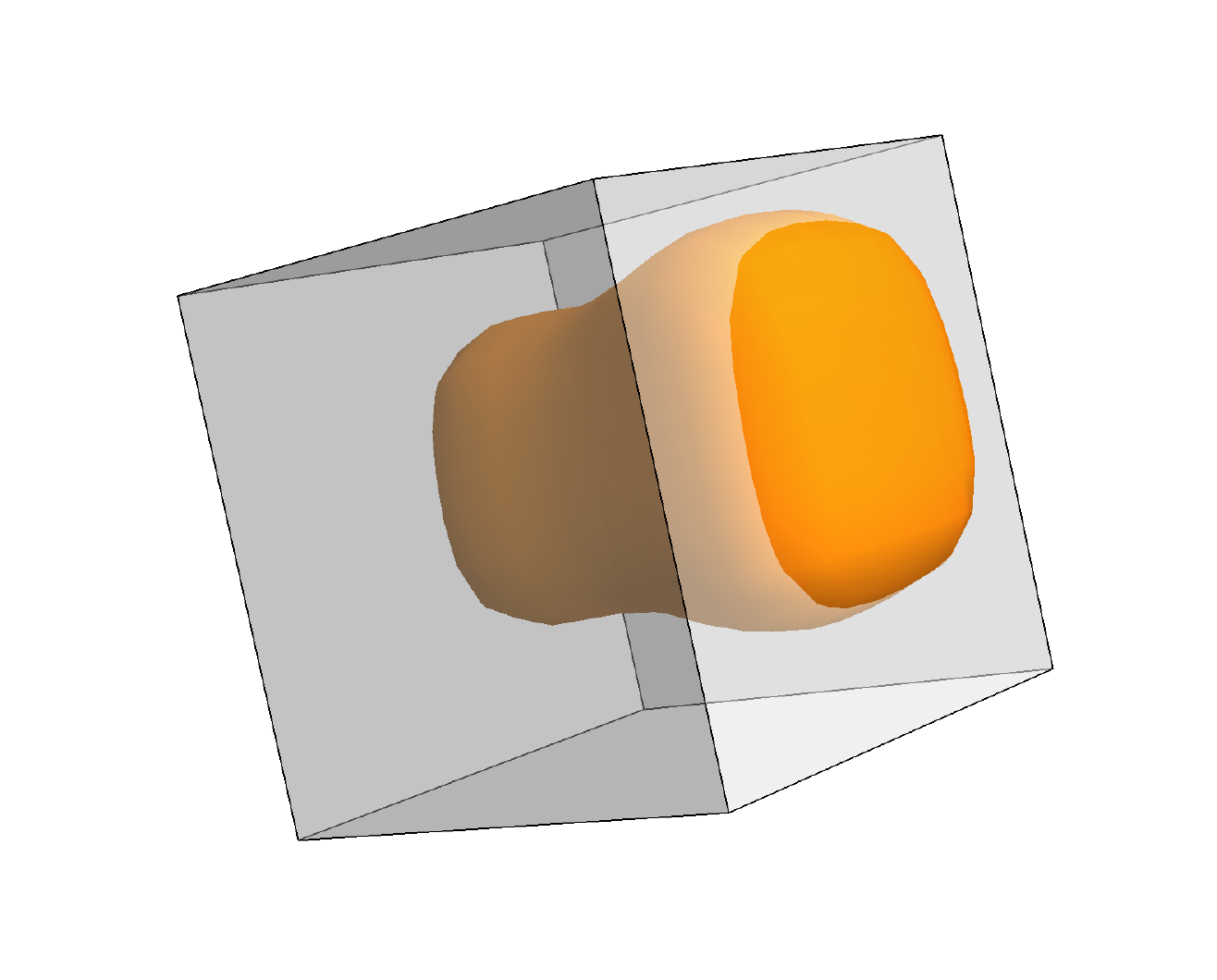}
\caption{Surface $X \subset \RR^3$ disjoint from the edges of a
  cube $\pi$.\label{fig:squeezedsurf}}
\end{figure}

\begin{thm} \label{thm:grid}
  Let $b_2$ be the radius of the narrowest bottleneck of $X$ and let
  $\epsilon > 0$. If $0<\delta \sqrt{n} < \min\{\epsilon, 2b_2\}$, then
  $S_{\delta}$ is an $\epsilon$-sample of $X$.
\end{thm}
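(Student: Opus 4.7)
The plan is to show that every $x\in X$ lies within distance $\delta\sqrt{n}$ of some point of $S_\delta$; since $\delta\sqrt{n}<\epsilon$, this gives the $\epsilon$-sample property. Let $\bar{c}$ denote the closed $n$-cube of the randomly translated tessellation that contains $x$. Choosing the random translation generically, I may assume that all the finitely many slices $X\cap\pi_t^{-1}(g)$ meeting $\bar{c}$ are transverse and smooth of the expected dimension. Since $\mathrm{diam}(\bar{c})=\delta\sqrt{n}$, it suffices to produce a point of $S_\delta$ lying inside $\bar{c}$.

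The first ingredient is a \emph{bottleneck argument} showing that no connected component of $X$ can be contained in $\bar{c}$. For if $Y\subseteq\bar{c}$ were such a component, it would be a smooth compact subvariety of $X$ with $\mathrm{diam}(Y)\leq\delta\sqrt{n}<2b_2$. A pair $(p,q)\in Y\times Y$ realizing $\mathrm{diam}(Y)$ must satisfy $(p-q)\perp T_pY$ and $(p-q)\perp T_qY$, otherwise the distance could be increased, so $\{p,q\}$ is a bottleneck of $X$ of radius $\|p-q\|/2<b_2$, contradicting the minimality of $b_2$.

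The second ingredient is a \emph{descent on the faces of $\bar{c}$} that tracks a component of $X$ through successive coordinate slicings. Let $Y_0$ be the connected component of $X\cap\bar{c}$ containing $x$. Inductively, at stage $k\geq 0$ I keep track of $k$ distinct coordinate hyperplanes $H_1,\dots,H_k$ from the grid which bound $\bar{c}$, the codimension-$k$ face $f_k=\bar{c}\cap H_1\cap\cdots\cap H_k$, and a connected component $Y_k$ of $X\cap f_k$ of generic dimension $d-k$. If $Y_k$ meets the relative boundary of $f_k$ at some point $y_{k+1}$, I choose a further bounding hyperplane $H_{k+1}$ of $\bar{c}$ with $y_{k+1}\in H_{k+1}$ and set $f_{k+1}=f_k\cap H_{k+1}$ and $Y_{k+1}$ to be the component of $X\cap f_{k+1}$ through $y_{k+1}$. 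Otherwise $Y_k$ lies in the relative interior of $f_k$, in which case it is simultaneously open and closed in $X\cap (H_1\cap\cdots\cap H_k)=X\cap\pi_t^{-1}(g)$, where $t\in T_k$ records the axes of $H_1,\dots,H_k$ and $g\in G_t(\delta)$ their grid values, so $Y_k$ is a compact connected component of this sliced variety.

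Since $\dim f_k$ strictly decreases with $k$, the descent must terminate in the ``interior'' case by stage $d$ at the latest, and three sub-cases then arise. If $k=0$, then $Y_0$ would be a component of $X$ inside $\bar{c}$, contradicting the bottleneck argument, so this cannot occur. If $1\leq k\leq d-1$, the normal locus $I(X\cap\pi_t^{-1}(g),q)\subseteq E'_{\delta,k}$ contains at least one point on every component of the sliced variety, and in particular supplies a sample point on $Y_k\subseteq f_k\subseteq\bar{c}$. Finally, if $k=d$, then $Y_d$ is zero-dimensional and consists of isolated points of $X\cap\pi_t^{-1}(g)\subseteq E_\delta$ lying in $\bar{c}$. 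In every case a point of $S_\delta$ inside $\bar{c}$ is produced, which is what was needed. The most delicate point to nail down is the generic perturbation: one must verify that a single random translation simultaneously puts all the finitely many slices relevant to $\bar{c}$ in transverse position, so that the dimension counts in the descent and the ``one point on every component'' property of the normal locus both hold; the remainder is combinatorial bookkeeping on the face lattice of $\bar{c}$.
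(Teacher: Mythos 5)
Your argument is correct and follows essentially the same strategy as the paper's: locate a face $F$ of the cube such that $X$ meets $F$ but not its relative boundary, so that a compact connected component of $X\cap F$ is picked up by the basic sample (if $F$ has dimension at most $n-d$) or by the extra sample (if the dimension lies strictly between $n-d$ and $n$), with the bottleneck hypothesis ruling out $F$ being the whole cube. The paper takes $F$ to be a face of minimal dimension meeting $X$ rather than running your explicit descent through the face lattice, and it states the no-component-fits-in-a-cube fact without proof; your diameter-realizing-pair argument for that fact is a worthwhile added justification, but the two proofs are the same in substance.
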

\begin{proof}
Let $p \in X$. We must show that there is a point of
$S_{\delta}=E_{\delta} \cup E'_{\delta}$ at a distance at most
$\epsilon$ from $p$. There is a complex $C$ of closed hypercubes (a
tessellation) filling $\RR^n$ such that \[E_{\delta} = \bigcup_{F \in
  C_{n-d}} X \cap F\] where $d=\dim{X}$ and $C_{n-d}$ denotes the set
of $(n-d)$-dimensional faces of the hypercubes in $C$. Let $\pi$ be a
hypercube in $C$ such that $p \in \pi$. Since the diagonal of $\pi$ has
length $\sqrt{n} \delta < \epsilon$, it is enough to find a point in
$S_{\delta} \cap \pi$. The diagonal of $\pi$ is shorter than the diameter of the narrowest
bottleneck of $X$, which means that no connected component of $X$ can
be contained in $\pi$. Hence $X$ intersects the boundary of $\pi$. Any
face of $\pi$ of dimension smaller than $n-d$ is contained in some
$(n-d)$-dimensional face of $\pi$. Hence, if $x \in X \cap F \neq
\emptyset$ for some face $F$ of $\pi$ of dimension $n-d$ or smaller,
then $x \in E_{\delta}$ and we are done. This settles the case $d=1$
since no component of $X$ is contained in $\pi$. Now let $F$ be a face
of $\pi$ such that $X \cap F \neq \emptyset$ and which has minimal
dimension among such faces. By above we may assume $\dim{F} >
n-d$. Since $X$ does not intersect the boundary of $F$, $X \cap F$ has
a connected component $X_F$ that is completely contained in $F$. But
then $E'_{\delta}$ contains a point $x \in X_F$.
\end{proof}

\subsection{Complexity analysis}
As preparatory steps for sampling we need to find a bounding box of $X
\subset \RR^n$ and we need to compute the narrowest bottleneck of
$X$. Given a bounding box $B$ of $X$ and a grid size $\delta$ it is
straightforward to compute the basic sample $E_{\delta}$ in
(\ref{eq:sample}) using standard homotopy methods \cite{bertini, SW05,
  BHSW13}. We just have to intersect $X$ with the linear spaces
$\pi_t^{-1}(g)$ for $g \in G_t(\delta) \cap \pi_t(B)$. Likewise the
extra sample $E'_{\delta}$ may be computed as explained in
\secref{sec:edd} using for instance multihomogeneous homotopies.


%
%
%
%
%
%
%
%

The bounding box may be found by computing the normal locus
$I(X,q)$ with respect to a general point $q \in \RR^n$ as explained in
\secref{sec:edd}. Assuming that the bottleneck locus is finite,
we may use for example the method presented in \cite{E18} to compute
the narrowest bottleneck. It is worth noting that the latter method
contains as a first step to compute a normal locus $I(X,q)$ so that we
may acquire a bounding box from the bottleneck computation alone. In
fact, one may derive a bounding box from the widest bottleneck as well
if another method to compute bottlenecks is chosen.

To analyze the complexity of computing $E_\delta$ and $E'_\delta$ we count the number of paths that have to be tracked using homotopy continuation methods. To compute $E_\delta$ we note that $\# T_k = {n \choose k}$ and that the number of grid points in $G_t(\delta)$ for each $t\in T_k$ is $b \delta^{-k}$, where $b$ is the width of the bounding box $B$. By using a parameter homotopy \cite{SW05}, it suffices to solve the system $X\cap L$ where $L$ is a generic affine linear space of complementary dimension and then track deg$(X)$ number of paths to obtain the solutions for each grid point $g\in G_t(\delta)$. Let $\#P(X\cap L)$ denote the number of paths that need to be tracked in order to solve the system $X\cap L$. Consequently we would in total have to track   $\#P(X\cap L) + {n \choose d}b\delta^{-d}\text{deg}(X)$ number of paths in order to compute the basic sample, since we have ${n \choose d}$ number of grids $G_t(\delta)$ and $b \delta^{-k}$ number of grid points for each grid. 


Let $L_k \subset \RR^n$ denote a generic affine linear subspace of codimension $k$. For the extra sample we have analogously that we need to track $\#P(I(X\cap L_k, q))$ number of paths for a generic $q\in \Bbb R^n$ in order to define the parameter homotopy. The number of solutions for this system is $\text{EDD}(X\cap L_k)$ and thus we have to track $\sum_{k=1}^{d-1} {n \choose k} b\delta^{-k}\text{EDD}(X\cap L_k)$ number of paths for the parameter homotopies in order to compute the extra sample. The total number of paths we have to track in order to compute the basic and extra sample using homotopy continuation is thus: 
$$\#P(X\cap L)+ {n \choose d}b\delta^{-d}\text{deg}(X) + \sum_{k=1}^{d-1} {n \choose k} b\delta^{-k} \text{EDD}(X\cap L_k) + \#P(I(X \cap L_k, q))$$


As a final comment we note that the sampling algorithm is
parallelizable. The basic sampling can be parallelized
over the various projections to coordinate planes as well as the grid
points. The extra sample may be parallelized over the linear slices
and computations of normal loci. Furthermore, if homotopy methods are
used to solve the systems of equations we may parallelize one more
aspect of the computation in a straightforward way. Namely, noting that
homotopy methods are ultimately path tracking algorithms using
prediction/correction, we may parallelize over the paths that need to
be tracked.

\subsection{Computational comparison}\label{sec:comp}
In this section we present a comparison of our sampling method with the sampling method presented in the paper \cite{DEHH18} by Dufresne et. al. that also produces a provably dense sampling of algebraic varieties. Due to the spatial structure of their algorithm, an analysis of its complexity is complicated. Their paper does not contain a complexity analysis of their algorithm which makes a theoretical comparison difficult.  Because of this we present a computational comparison between the two algorithms where we analyse the number of paths that need to be tracked by a homotopy continuation software. Dufresne et. al. also use parameter homotopies to solve their problem and at each inner loop they have to track EDD$(X)$ number of paths. Note that for convenience we do not consider the number of paths that need to be tracked to compute the generic start point of the parameter homotopy.  Due to the structure of their algorithm, analysing the number of paths that have to be tracked is difficult as it relies on spatial data structures. We therefore present the following computational comparison. The varieties in the table below were chosen to be random compact complete intersections of the indicated dimension and degree.
\begin{table}[!htbp]
	\centering
	\begin{tabular}{cccccccccc}
		\midrule
		$n$ & dim$(X)$ & deg$(X)$ & $\delta$ & \multicolumn{2}{c}{\# tracked paths} & \multicolumn{2}{c}{\# sample points} &  \multicolumn{2}{c}{total CPU time}  \\
		\midrule
		{}	&{} & {} & {} & DEHH \cite{DEHH18}   & DEG    & DEHH \cite{DEHH18}   & DEG  & DEHH \cite{DEHH18}   & DEG\\
		
		2 & 1 & 2 & 0.5 & 1184  & 148 & 36 & 58 & 16s & 0.7s \\
		2 & 1 & 2 & 0.1 & 2940 & 724 & 176 & 294 & 40s & 0.9s \\
		
		3 & 2 & 2 & 0.5 & 476814  & 288 & 69 & 149 & 20m & 0.9s \\
		3 & 2 & 2 & 0.1 & 764688  & 4698 & 976  & 2479 & 30m & 1.4s \\
		
		
		4 & 2 & 4 & 0.5 & 20 467 848  &  576  & 75 & 122 & 62h & 1.3s \\
		4 & 2 & 4 & 0.1 & $>10^8$  & 8640  & -  & 2884 & $> 72$h & 3s \\
	\end{tabular}
	\caption{Computational comparison with the algorithm by Dufresne et. al. \cite{DEHH18}.}
\end{table}

It is clear from the above table that the algorithm by Dufresne et. al. tends to compute a sparser sample, but at a significantly higher cost. As the ambient dimension increases their algorithm breaks down since the number of boxes that need to be checked in the spatial data structure explodes, while our algorithm remains stable.

The experiments for the algorithm by Dufresne et. al. was run using the implementation by Edwards \cite{sampling_edwards} on the Tegner supercomputer at Parallelldatorcentrum (PDC) KTH, using a computational node consisting of 24 Intel E5-2690v3 Haswell cores and 512Gb of RAM. A public implementation of our algorithm is available at \cite{sampling_bottlenecks2020} and the experiments were run on the same computer as above.

\subsection{Experiments} \label{sec:exp}

In this section we present some sampling experiments. The equation
solving required for the sampling was carried out using numerical
homotopy methods with the solver \emph{Bertini} \cite{bertini}. The
sampled varieties are random complete intersections $X \subset
\RR^n$. To get more interesting examples we choose homogeneous
polynomials and to get compact varieties we add a random ellipsoid
defined by $\sum_{i=1}^n (1+r_i)x_i^2 = 1$ with $r_i>0$ random.

We first produce samples of random surfaces in $\RR^4$ defined by the
ellipsoid and a quadratic, cubic, quartic or quintic random
polynomial. Coarse samplings projected to $\RR^3$ of the generated
surfaces are shown in \figref{fig:surfaces_R4}. In
\tabref{tab:data-R4} we gather some data from the sampling
process. The samples produced are $\epsilon$-samples for various
$\epsilon$ given in the table. The other columns are the grid size
$\delta$, the size of the basic sample $E_{\delta}$ and extra sample
$E'_{\delta}$ as well as the total CPU time used for all processes. We
write the CPU time as $a+b$ where $a$ is the time for computing
bottlenecks and $b$ is the time for sampling. For the quadric surface
$a$ is less than 5 seconds and is omitted in the table. We implemented
the algorithm using eight 1.80 GHz cores of type Intel i7-8550U.

Next we consider random complete intersection surfaces in $\RR^n$
defined by random quadrics. We conducted the experiments for three
surfaces in $\RR^5$, $\RR^6$ and $\RR^7$, respectively. We also did a
complete intersection curve in $\RR^6$ defined by random quadrics. As
above we take homogeneous quadrics except for the random ellipsoid
which is affine and included in all examples. Coarse samples projected
to $\RR^3$ may be seen in \figref{fig:cis}. Some sampling data is
presented in \tabref{tab:data-ci} where we, in addition to the fields
discussed above, also include the ambient dimension $n$, the dimension
of $X$ and the degree of $X$ as a complex variety.

\begin{figure}[!ht]
\centering
\subfloat[][A quadric.]{
  \includegraphics[trim={2cm 3cm 2cm 3cm},clip,scale=0.1]
                  {quadric_R4.png}}
\qquad
\subfloat[][A cubic.]{
  \includegraphics[trim={1cm 1cm 1cm 1cm},clip,scale=0.09]
                  {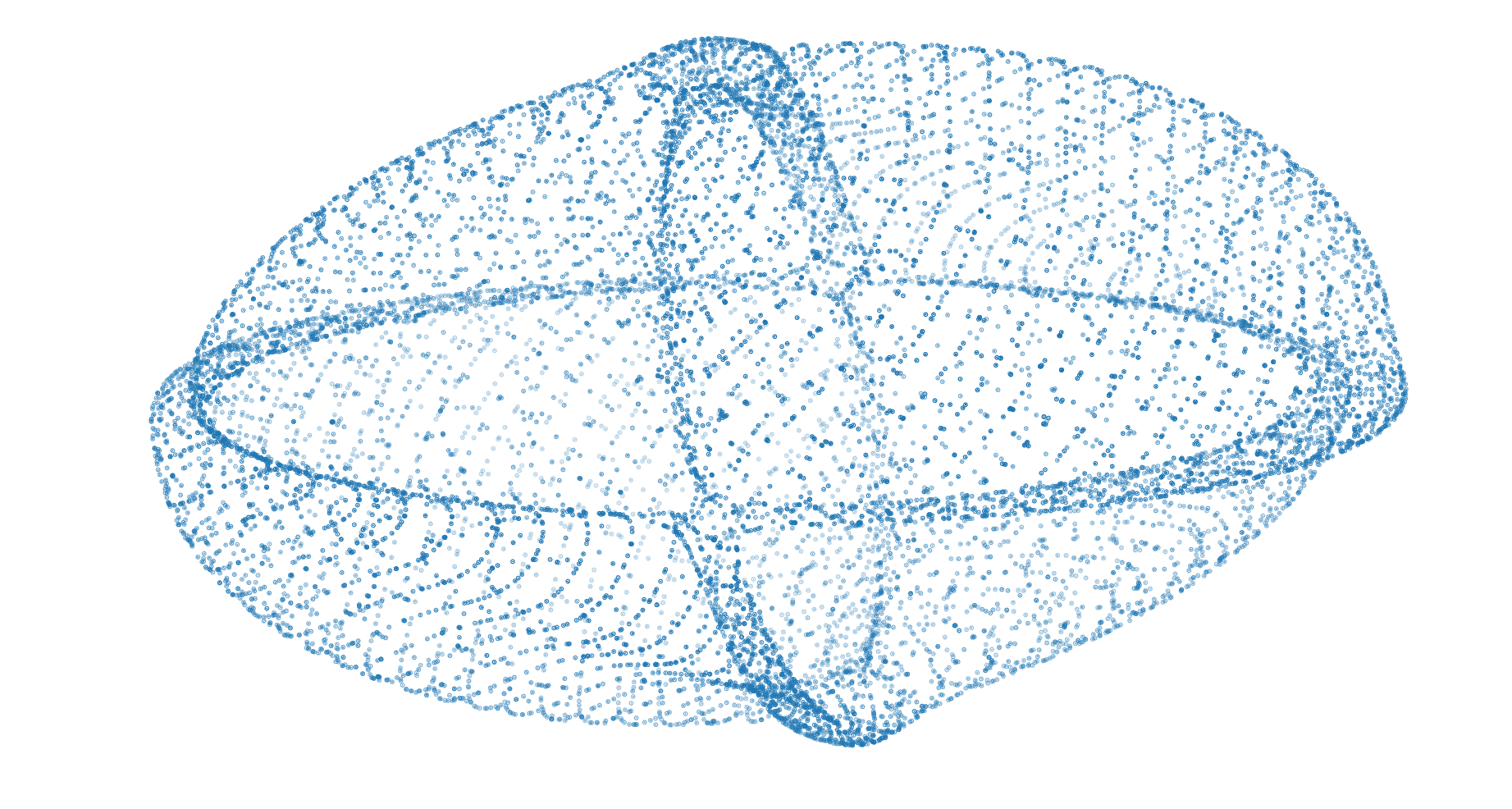}}
\qquad
\subfloat[][A quartic.]{
  \includegraphics[trim={2cm 3cm 2cm 2cm},clip,scale=0.1]
                  {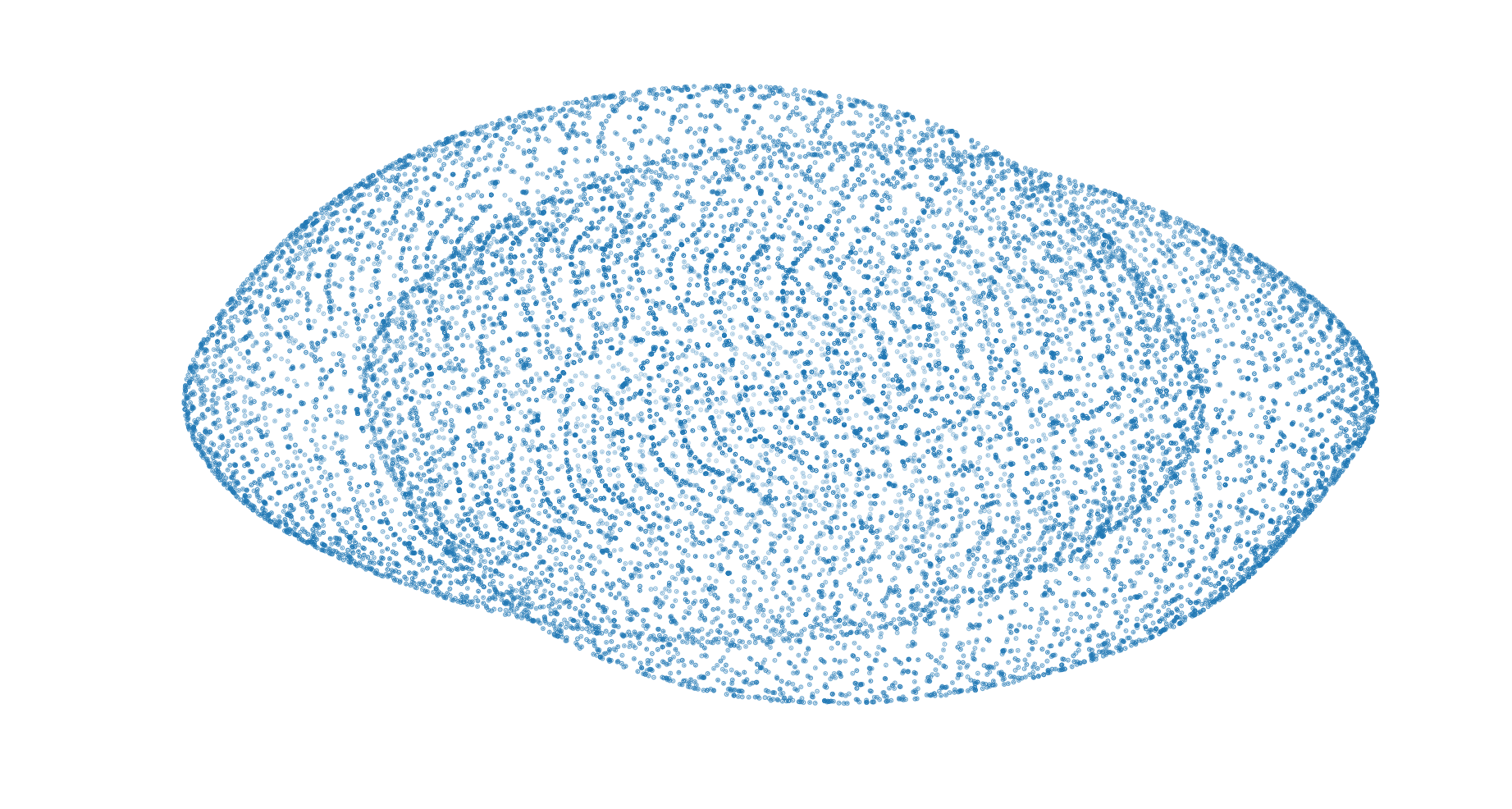}}
\qquad
\subfloat[][A quintic.]{
  \includegraphics[trim={2cm 3cm 2cm 2cm},clip,scale=0.1]
                  {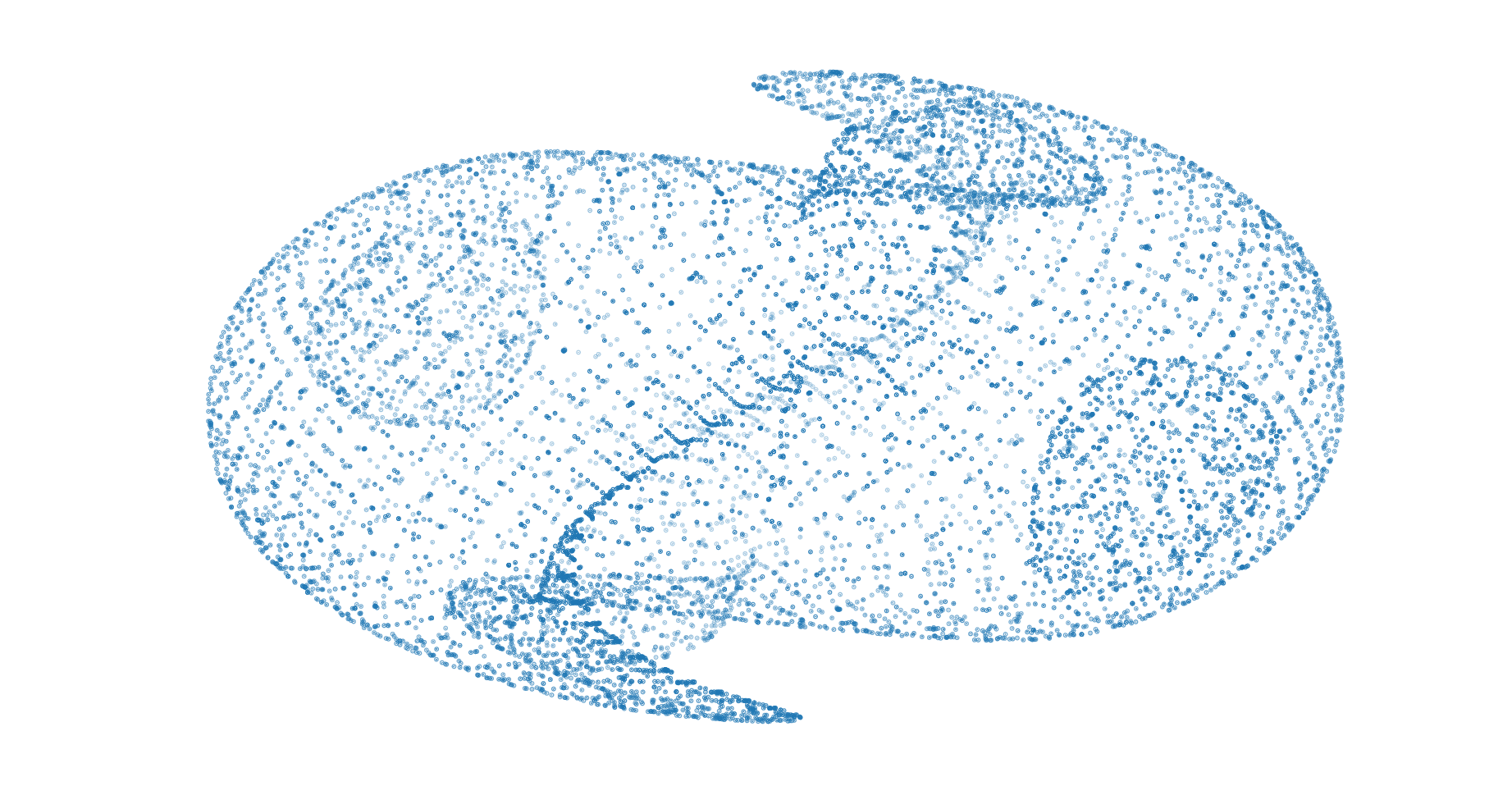}}
\caption{Plots of sampled surfaces in $\RR^4$.
\label{fig:surfaces_R4}}
\end{figure}

\begin{table}[!ht]
  \begin{center}
  
  \begin{tabular}{cccccc}
    \hline $X$ & $\epsilon$ & $\delta$ & $|E_{\delta}|$ &
    $|E'_{\delta}|$ & total CPU time (m) \\
    \hline \\
    Quadric & 0.1 & 0.05 & 11110 & 438 & 5 \\ 
    Quadric & 0.02 & 0.01 & 269900 & 2140 & 122 \\
    Quadric & 0.01 & 0.005 & 1075192 & 4313 & 470 \\

    Cubic & 0.06 & 0.03 & 35795 & 1150 & 1+26 \\ 
    Cubic & 0.02 & 0.01 & 315468 & 3512 & 1+224 \\

    Quartic & 0.06 & 0.03 & 40390 & 1026 & 8+49 \\ 
    Quintic & 0.1 & 0.05 & 10338 & 602 & 95+52 \\ 
  \end{tabular}
  \end{center}
  \caption{Sampling data for surfaces in $\RR^4$.\label{tab:data-R4}}
\end{table}

\begin{figure}[!ht]
\centering
\subfloat[][A surface in in $\RR^5$.]{
  \includegraphics[trim={2cm 3cm 2cm 3cm},clip,scale=0.08]
                  {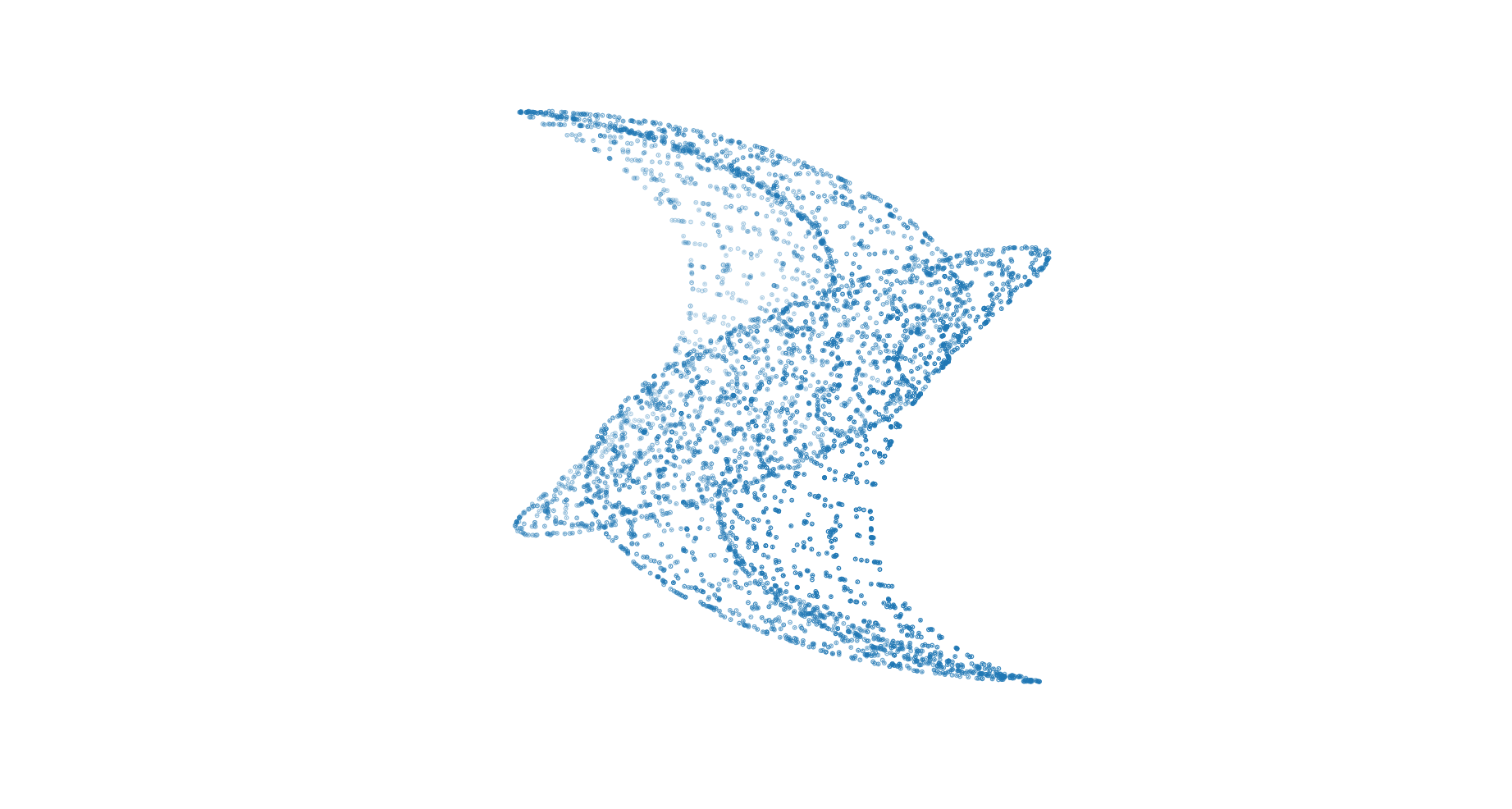}}
\qquad
\subfloat[][A surface in $\RR^6$.]{
  \includegraphics[trim={2cm 3cm 2cm 3cm},clip,scale=0.063]
                  {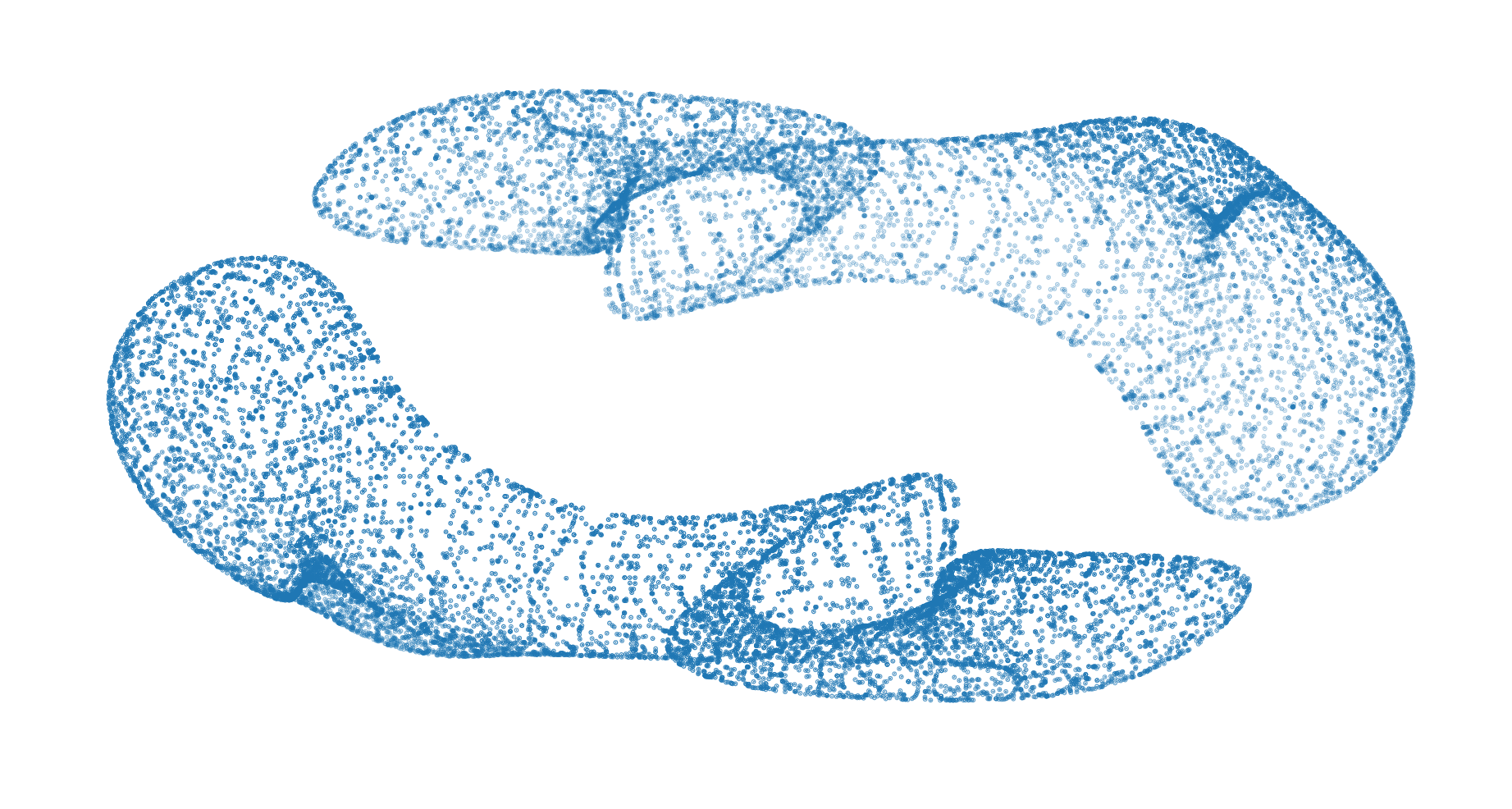}}
\qquad
\subfloat[][A curve in $\RR^6$.]{
  \includegraphics[trim={6cm 8cm 6cm 6cm},clip,scale=0.1]
                  {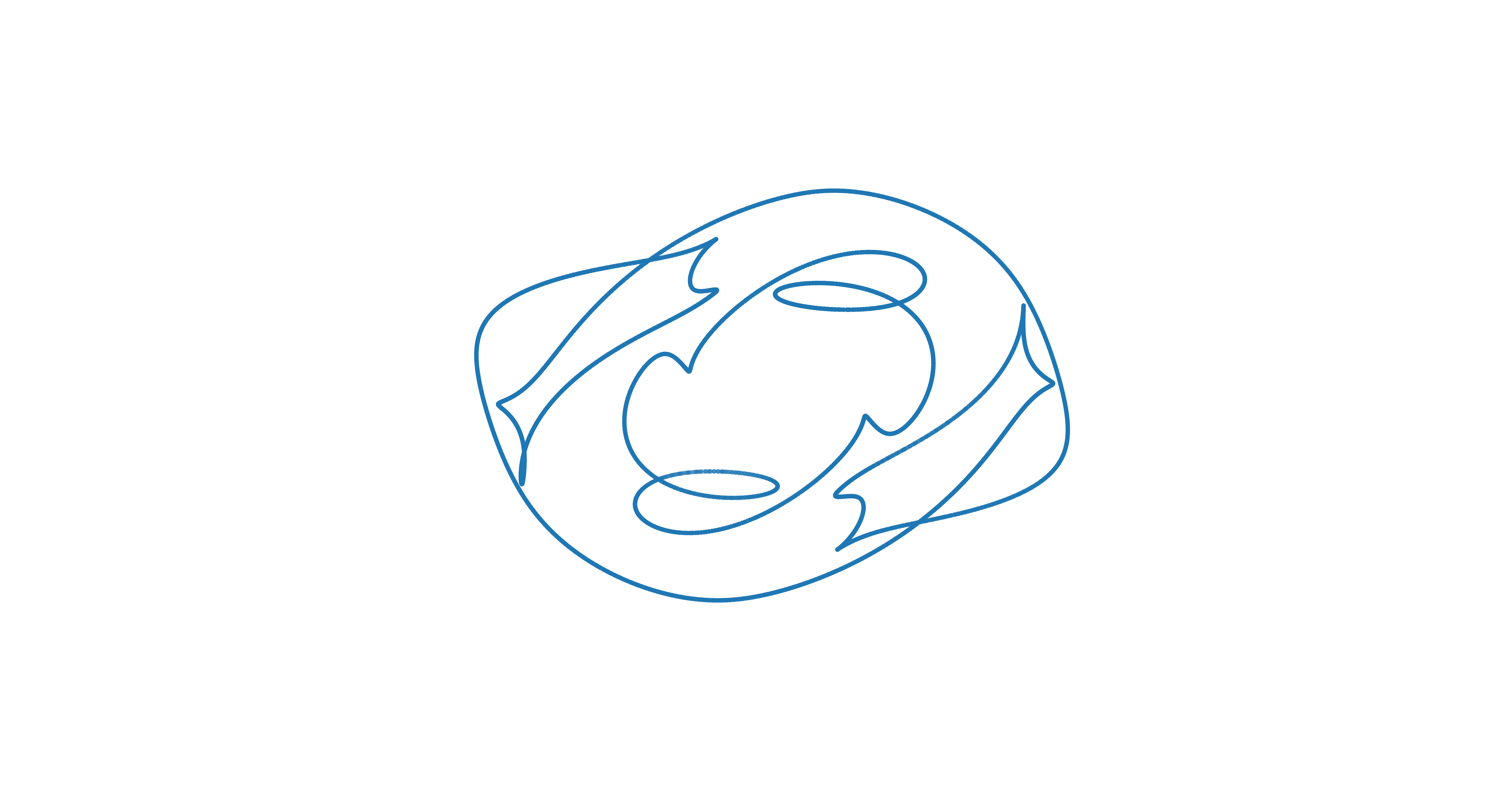}}
\qquad
\subfloat[][A surface in $\RR^7$.]{
  \includegraphics[trim={3cm 3cm 3cm 3cm},clip,scale=0.06]
                  {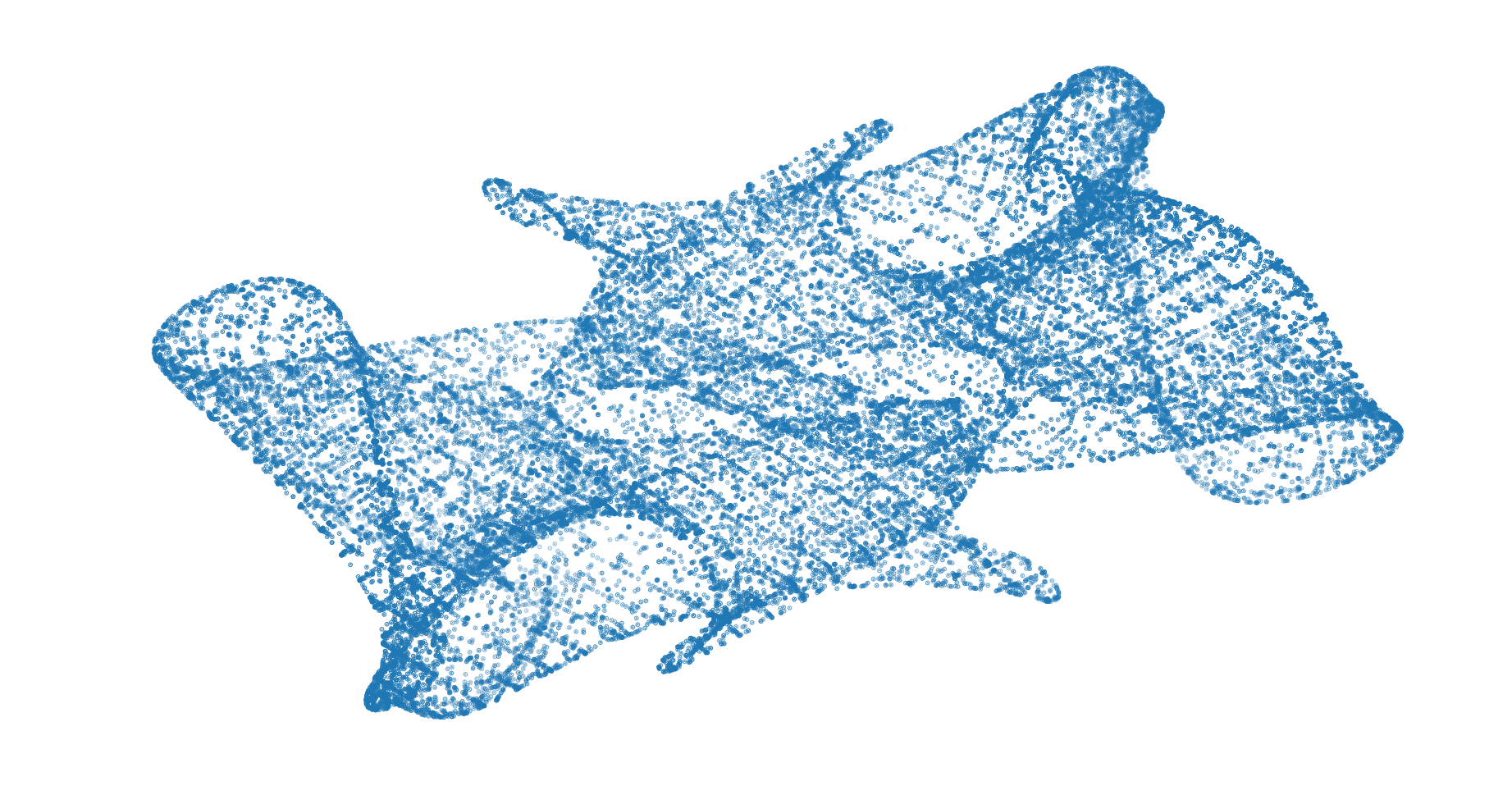}}
\caption{Plots of sampled complete intersection surfaces and a curve.
\label{fig:cis}}
\end{figure}


\begin{table}[ht!]
  \begin{center}
  \begin{tabular}{cccccccc}
    \hline
    $n$ & $\dim{X}$  & $\deg{X_{\CC}}$ & $\epsilon$ & $\delta$ & $|E_{\delta}|$ & $|E'_{\delta}|$ & total CPU time (m) \\
    \hline\\
    5 & 2 & 8 & 0.12 & 0.05 & 4428 & 304 & 1+22 \\ 
    6 & 2 & 16 & 0.13 & 0.05 & 22986 & 1070 & 19+70 \\ 
    6 & 1 & 32 & 0.013 & 0.005 & 9172 & 0 & 6+11 \\ 
    7 & 2 & 32 & 0.027 & 0.01 & 801472 & 8824 & 155+5192\\ 
  \end{tabular}
  \end{center}
    \caption{Sampling data for complete intersections $X \subset
      \RR^n$ defined by quadrics.\label{tab:data-ci}}
\end{table}
\newpage

\section{Homology guarantees} \label{sec:homology}
In this section we show, using the results of \cite{10.1016/j.gmod.2005.01.002}, that the homology of the thickenings of a compact smooth variety $X$ is controlled by its generalized bottlenecks. We show in Theorem \ref{thm:bn_hom} that if $\epsilon<wfs(X)$, then the zeroth and first homology of $X$ can be recovered from an $\epsilon$-sample using a modified Vietoris-Rips complex described in Section \ref{sec:modified}. 

Computing the weak feature size is a hard problem, but it is lower bounded by the reach of $X$ \cite{Chazal2009}. The last part of this section we present a variant of the theorem of Niyogi, Smale and Weinberger \cite{NSW08} stating that we can recover the correct homology of $X$ from an $\epsilon$-sample using local estimates of the reach.

\subsection{Homology from bottlenecks}\label{sec:modified} 
Recall the maps $\ell_k \colon B_k^{\Bbb R} \to \Bbb R$ from Definition \ref{def:wfs}. Let $L \coloneqq \cup_{k=1}^{EDD(X)} \text{im}( \ell_k)$ and let $X_\epsilon$ denote the $\epsilon$-thickening of $X$ defined as follows:
$$X_\epsilon \coloneqq \{ y\in \RR^n \mid \overline{B}_y(\epsilon) \cap X \neq \emptyset \}.$$ 
It is shown in  \cite[Theorem 1]{10.1016/j.gmod.2005.01.002} that if $\epsilon$ is less than the weak feature size of $X$, then $X_\epsilon$ is homotopy equivalent to $X$. We show the following result:

\begin{thm}\label{thm:bn_crit}
	Let $\epsilon_1 < \epsilon_2 \in \RR$. If $[\epsilon_1, \epsilon_2] \cap L = \emptyset$, then $$H_i(X_{\epsilon_1}) \cong H_i(X_{\epsilon_2})$$ In particular, if $\epsilon<wfs(X)$ then $X$ and $X_{\epsilon}$ are homotopy equivalent.
\end{thm}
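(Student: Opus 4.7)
The plan is to identify the set $L$ with the set of critical values of the Euclidean distance function $d_X\colon \RR^n \to [0,\infty)$, $d_X(y) = \min_{x\in X}\|x-y\|$, and then to quote the isotopy lemma for distance functions established in \cite{10.1016/j.gmod.2005.01.002}. Recall that in that framework a point $y \in \RR^n \setminus X$ is a \emph{critical point} of $d_X$ exactly when $y$ lies in the convex hull of its projection set $\pi_X(y) = \{x\in X : \|x-y\| = d_X(y)\}$, and the $wfs$ of $X$ is defined as the infimum of the positive critical values of $d_X$.

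First I would show that the positive critical values of $d_X$ coincide with $L$. Suppose $y$ is a critical point at level $r > 0$ with $\pi_X(y) = \{x_1,\dots,x_k\}$ finite. Then all $x_i$ lie on the $(n{-}1)$-sphere of radius $r$ centred at $y$, so $y \in \rho(x_1,\dots,x_k)$; since each $x_i$ minimises $\|\cdot - y\|^2$ on $X$, we have $y - x_i \in N_X(x_i)$, i.e.\ $y \in \cap_i N_X(x_i)$; and $y \in \mathrm{Conv}(x_1,\dots,x_k)$ by the criticality hypothesis. Hence $(x_1,\dots,x_k) \in B_k^{\RR}$ and $r = \ell_k(x_1,\dots,x_k) \in L$. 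The converse implication is immediate from Definition~\ref{def:bn_locus}. The non-generic situation in which $\pi_X(y)$ is infinite (for instance if $X$ contains a round sphere) is absorbed in the definition of $L$ through the closure/infimum conventions in Definition~\ref{def:wfs}.

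Next I invoke the deformation-retract result of \cite{10.1016/j.gmod.2005.01.002}: if $[\epsilon_1,\epsilon_2]$ contains no critical values of $d_X$, then the generalised gradient flow of $d_X$ produces a strong deformation retract of $d_X^{-1}([0,\epsilon_2])$ onto $d_X^{-1}([0,\epsilon_1])$. Since $X_\epsilon = d_X^{-1}([0,\epsilon])$ by definition of the thickening, this gives $X_{\epsilon_1}\simeq X_{\epsilon_2}$ and hence the desired isomorphisms $H_i(X_{\epsilon_1}) \cong H_i(X_{\epsilon_2})$.

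For the ``in particular'' statement, note that $wfs(X) = \min L$ by Definition~\ref{def:wfs}, so if $\epsilon < wfs(X)$ then the interval $(0,\epsilon]$ contains no critical value. Applying the previous paragraph, $X_\epsilon \simeq X_{\epsilon'}$ for any $0 < \epsilon' < \min(\epsilon,\tau_X)$; and for such small $\epsilon'$ compactness and smoothness of $X$ make $X_{\epsilon'}$ a genuine tubular neighbourhood, which strong-deformation retracts onto $X$ along the well-defined nearest-point projection $\pi_X$. Composing these two equivalences gives $X \simeq X_\epsilon$. The main obstacle is the first step: one must carefully verify that the algebraic/geometric conditions of Definition~\ref{def:bn_locus} (intersection of normal spaces, equidistance sphere $\rho$, convex-hull containment) are precisely the translation into the setting of a smooth variety of the Grove--Shiohama criticality condition used in \cite{10.1016/j.gmod.2005.01.002}, and to handle uniformly the case where $\pi_X(y)$ fails to be finite.
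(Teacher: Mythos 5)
Your proof is correct but takes a more direct route than the paper's. You apply the Grove/Chazal--Lieutier isotopy lemma for the distance function $d_X$ directly to the sublevel sets $X_\epsilon = d_X^{-1}([0,\epsilon])$: since $L$ contains every positive critical value of $d_X$, and disjointness of $[\epsilon_1,\epsilon_2]$ from $L$ therefore rules out critical values in that range, the generalized gradient flow yields the deformation retraction. (Note that this one inclusion is all you actually need; the converse you call ``immediate'' is not, since a point $y \in \cap_i N_X(x_i) \cap \rho(x_1,\dots,x_k) \cap \mathrm{Conv}(x_1,\dots,x_k)$ need not have $\{x_1,\dots,x_k\}$ as its projection set $\pi_X(y)$ -- but a superset of the critical values suffices.) The paper instead intersects with a large ball $\mathbb{B}_\sigma$, passes to the complements $Y_i = \mathbb{B}_\sigma\setminus X_{\epsilon_i}$, computes $wfs(Y_1)=\min\{t-\epsilon_1: t\in L,\; t>\epsilon_1\} > \epsilon_2-\epsilon_1$, applies the cited theorem to show $Y_2$ is a deformation retract of $Y_1$, and then deduces the equivalence of the thickenings. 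Your route is cleaner and avoids the final passage from complements back to thickenings, which the paper's proof leaves implicit; the paper's route stays closer to the literal statement of the cited result. One small slip: $wfs(X)=\min\bigl(L\cap(0,\infty)\bigr)$ rather than $\min L$, since $\mathrm{im}(\ell_1)=\{0\}\subset L$; but your argument only uses that $(0,\epsilon]$ avoids critical values, so nothing breaks.
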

\begin{proof}
	Let $\mathbb{B}_{\sigma}$ be an open ball centered at the origin containing the thickening $X_{\epsilon_2}$ of $X$. Consider now the sets $Y_1 \coloneqq \mathbb{B}_{\sigma} \setminus X_{\epsilon_1}$ and $Y_2 \coloneqq \mathbb{B}_{\sigma} \setminus X_{\epsilon_2}$. The thickening of $X$ preserves normality and from \cite{10.1016/j.gmod.2005.01.002} it follows that  the weak feature size of the thickening equals $wfs(Y_1) = \text{min} \{ t-\epsilon_1 \mid t \in L \text{ and } t > \epsilon_1 \}$. Now since $[\epsilon_1, \epsilon_2] \cap L = \emptyset$ it follows that $\epsilon_2-\epsilon_1 < wfs(Y_1)$ and thus by \cite[Theorem 1]{10.1016/j.gmod.2005.01.002} it follows that $Y_1$ and $Y_2$ are homotopy equivalent and that $Y_2$ is a deformation retract of $Y_1$. Consequently, $X_{\epsilon_1}$ and $X_{\epsilon_2}$ are homotopy equivalent as well.
	
	The latter statement follows from the above arguments and by the fact that $X$ is homotopy equivalent to $X_{\bar{\epsilon}}$ for $\bar{\epsilon} < \tau_X$ by \cite{BCL17}. By the same argument as in the previous paragraph, it follows that $X_\epsilon$ is homotopy equivalent to $X_{\bar{\epsilon}}$ which is then in turn homotopy equivalent to $X$.
%
%
\end{proof}

\begin{ex}
	Consider the filtration of thickenings $\{ X_\epsilon \}_{\epsilon\in \RR}$ where $X_{\epsilon}\subseteq X_{\epsilon_2}$ for $\epsilon_1 \leq \epsilon_2$. Computing the homology of each $X_\epsilon$ in the filtration yields a functor sending $\epsilon$ to the $i$th homology group of $X_\epsilon$. By the above theorem $H_i (X_{\epsilon_1}) \cong H_i (X_{\epsilon_2})$ if $[\epsilon_1 , \epsilon_2] \cap L = \emptyset$. Thus the values of when the $i$th homology of the filtration  $\{ X_\epsilon \}_{\epsilon\in \RR}$ changes is a subset of set $L$. These are called the \textit{critical values} of the functor. The functors describing the homology of the filtration is in \cite{HOROBET2019101767} called the \textit{barcodes} of $X$. We conclude that the above theorem thus implies that the critical values of the barcodes of $X$ is a subset of the sizes of its generalized bottlenecks. 
\end{ex}

We will now show that the zeroth and first homology of $X$ can be recovered from a $\epsilon$-sample of $X$ for $\epsilon < b_2$ in the case when $b_2=wfs(X)$. To recover the homology from the sample we make a special Vietoris-Rips construction on the sample and show that the resulting simplicial complex has the same zeroth and first homology as $X$. The intuition is that the part of the reach coming from the curvature of $X$ does not matter for homology and the only thing that matters are the bottlenecks. However, computing the homology of an $\epsilon$-sample of $X$ with $\epsilon < b_2$ using e.g. a Vietoris-Rips complex does not immediately work since we run into trouble at the regions of high curvature. We thus have to find a way of excluding their contribution to the homology. 

Since we are concerned with only the zeroth and first homology of $X$ we consider only simplices up to dimension two. Let $S$ be the modified Vietoris-Rips complex constructed as follows:
\begin{enumerate}
	\item there is a 1-simplex between $a, b\in E$ if: \begin{enumerate}\item[($i$)] $\bar{B}_\epsilon(a) \cap \bar{B}_\epsilon(b) \neq \emptyset$ or \item[($ii$)]\label{cond:ii} there exists a $c\in E$ such that $\bar{B}_\epsilon(a) \cap \bar{B}_\epsilon(c) \neq \emptyset$, $\bar{B}_\epsilon(b) \cap \bar{B}_\epsilon(c) \neq \emptyset$ and $\bar{B}_{\sqrt{8}\epsilon}(a) \cap \bar{B}_{\sqrt{8}\epsilon}(b) \neq \emptyset$.\end{enumerate}
	\item there  is a 2-simplex between $a, b, c \in E$ if there is a 1-simplex between each pair.
\end{enumerate}
Since $\epsilon< b_2$ it follows that condition (1).($ii$) is only satisfied in regions of high curvature. The factor $\sqrt{8}$ ensures that the triangle between $a, b$ and $c$ can fit inside a quarter of a circle of radius $2\epsilon$. This means that if $a, c$ and $b, c$ are both at distance $2\epsilon$ from each other, then the triangle between $a, b$ and $c$ cannot have an angle larger than 90 degrees at $c$.

Let $U(E, \epsilon)$ denote the union of all closed balls $\bar{B}_\epsilon(x)$ for $x\in E$. We label a 1-simplex between $a, b\in E$ in $S$ as \textit{good} if the intersection of $X$ and the smallest closed ball containing both $a$ and $b$ has only one component. This means that the 1-simplex can be deformed in $U(E, \epsilon)$ to a path between $a$ and $b$ lying on $X$. Note that we will only have that the above intersection has two components in regions of high curvature since $\epsilon<b_2$. We will show that any 1-simplex in $S$ is homotopic to a path of good edges in $S$. This then implies that the first homology of $S$ and $X$ are the same. 

\begin{lemma}\label{lem:hom}
	Any 1-simplex in $S$ is homotopic to a path of good edges in $S$.
\end{lemma}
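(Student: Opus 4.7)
The plan is to reduce to Type (i) edges first, then to construct a chain of good edges along a path in $X$ for any non-good Type (i) edge.

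For a Type (ii) edge $[a,b]$, the defining witness $c \in E$ satisfies $\|a-c\| \leq 2\epsilon$ and $\|b-c\| \leq 2\epsilon$, so $[a,c]$ and $[b,c]$ are Type (i) edges, and the triple $\{a,b,c\}$ spans a 2-simplex of $S$ by the construction rule for 2-simplices. A homotopy across this 2-simplex replaces $[a,b]$ by the concatenation $[a,c] \cdot [c,b]$, reducing the problem to Type (i) edges.

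For a Type (i) edge $[a,b]$, I would first show that $a$ and $b$ lie in the same connected component of $X$. If not, the nearest pair of points $p_1, p_2$ between the two components satisfies $\|p_1 - p_2\| \leq \|a-b\| \leq 2\epsilon$ and forms a 2-bottleneck (the segment $p_1p_2$ is normal to $X$ at both endpoints, with midpoint in the convex hull) of radius at most $\epsilon$, contradicting $\epsilon < wfs(X) \leq b_2$. Hence a continuous path $\gamma$ on $X$ joins $a$ and $b$. Using the $\epsilon$-density of $E$, I would select a chain $a = c_0, c_1, \ldots, c_N = b$ of sample points lying within $\epsilon$ of a fine partition of $\gamma$, with spacing chosen so that each $[c_i, c_{i+1}]$ is contained in a ball lying inside a local trivializing neighborhood of $X$ (whose size is bounded below by the local reach at the corresponding point). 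Each such edge is then good. The chain is homotoped to $[a,b]$ inside $S$ by inductively inserting the $c_i$ via 2-simplices of the form $\{a, c_i, c_{i+1}\}$ or $\{c_{i-1}, c_i, c_{i+1}\}$, using the Type (ii) reduction to supply any missing edges.

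The main obstacle is ensuring that all the 2-simplices required to realize the homotopy are actually present in $S$. Forming such a simplex requires either three pairwise Type (i) edges, or two Type (i) edges with the remaining pair within $2\sqrt{8}\epsilon$. The $\sqrt{8}\epsilon$ slack built into condition (ii) is precisely what supplies this flexibility: when sample points along $\gamma$ have pairwise distances slightly exceeding $2\epsilon$ due to the $\epsilon$ tolerance in their placement, they still span the necessary 2-simplices. The technical heart of the argument is thus a geometric bookkeeping verifying that every intermediate triangle arising in the homotopy meets the condition (ii) threshold; here one uses compactness of $\gamma$ together with the fact that $\epsilon < b_2$ prevents $X$ from folding back on itself tightly enough to block the construction.
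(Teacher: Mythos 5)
Your proposal takes a genuinely different route from the paper. You connect $a$ and $b$ by a continuous path $\gamma$ on $X$, sample densely along it, and try to cone the resulting chain to $[a,b]$. The paper instead runs a \emph{local descent}: given a bad edge $[a,b]$, the absence of bottlenecks in $\bar{B}_\epsilon(a)\cup\bar{B}_\epsilon(b)$ is used to produce $c,d\in E$ with $[a,c]$, $[d,b]$ good and $d(c,d)<d(a,b)$, and a two-case analysis on the angle at $a$ shows that the required 2-simplices (either $\{a,b,c\},\{b,d,c\}$ or $\{a,c,d\},\{a,b,d\}$) are in $S$. This replaces $[a,b]$ by $[a,c]\cdot[c,d]\cdot[d,b]$ and iterates on the strictly shorter middle edge; finiteness of $E$ forces termination. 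The descent structure means only one localized triangle configuration needs to be verified at a time, which is what makes the $\sqrt{8}\epsilon$ threshold checkable.

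There is a genuine gap in your argument, in two places. First, the proposed homotopy via 2-simplices of the form $\{a,c_i,c_{i+1}\}$ requires the edges $[a,c_i]$ to exist in $S$ for every intermediate $i$; but the path $\gamma$ may wander arbitrarily far from $a$ (well beyond $2\sqrt{8}\epsilon$) before returning to $b$, so those edges simply will not be present, and the homotopy cannot be realized inside the 2-skeleton $S$. Saying the $\sqrt{8}\epsilon$ slack ``supplies this flexibility'' does not address this: condition $(1)(ii)$ still caps the separation of the two endpoints of the synthesized edge. Second, you explicitly defer the ``geometric bookkeeping'' verifying that all intermediate triangles meet the condition $(1)(ii)$ threshold, but that verification is not a technicality to be postponed --- it is the actual content of the lemma, and the paper's angle-case argument is exactly the device that discharges it. As written, your plan is a plausible outline of a different strategy, not a proof.
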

\begin{proof}
	Let $a, b\in E$ be such that there is a 1-simplex between $a$ and $b$ in $S$ which is not good. Without loss of generality, assume that there is no neighbour $c$ of $b$ such that $b, c$ is a good edge and $d(a, c) < d(a, b)$. If there exist such a $c$ the path $a, c, b$ is homotopic to $a, c$ and since $b, c$ is a good edge we need to show the statement for $a, c$. It then also follows that $a$ is the closest to $b$ of its neighbours with good edges. 
	
	\begin{figure}[ht]
		\includegraphics[trim={0cm 3cm 0cm 1cm},clip,width=0.5\linewidth]{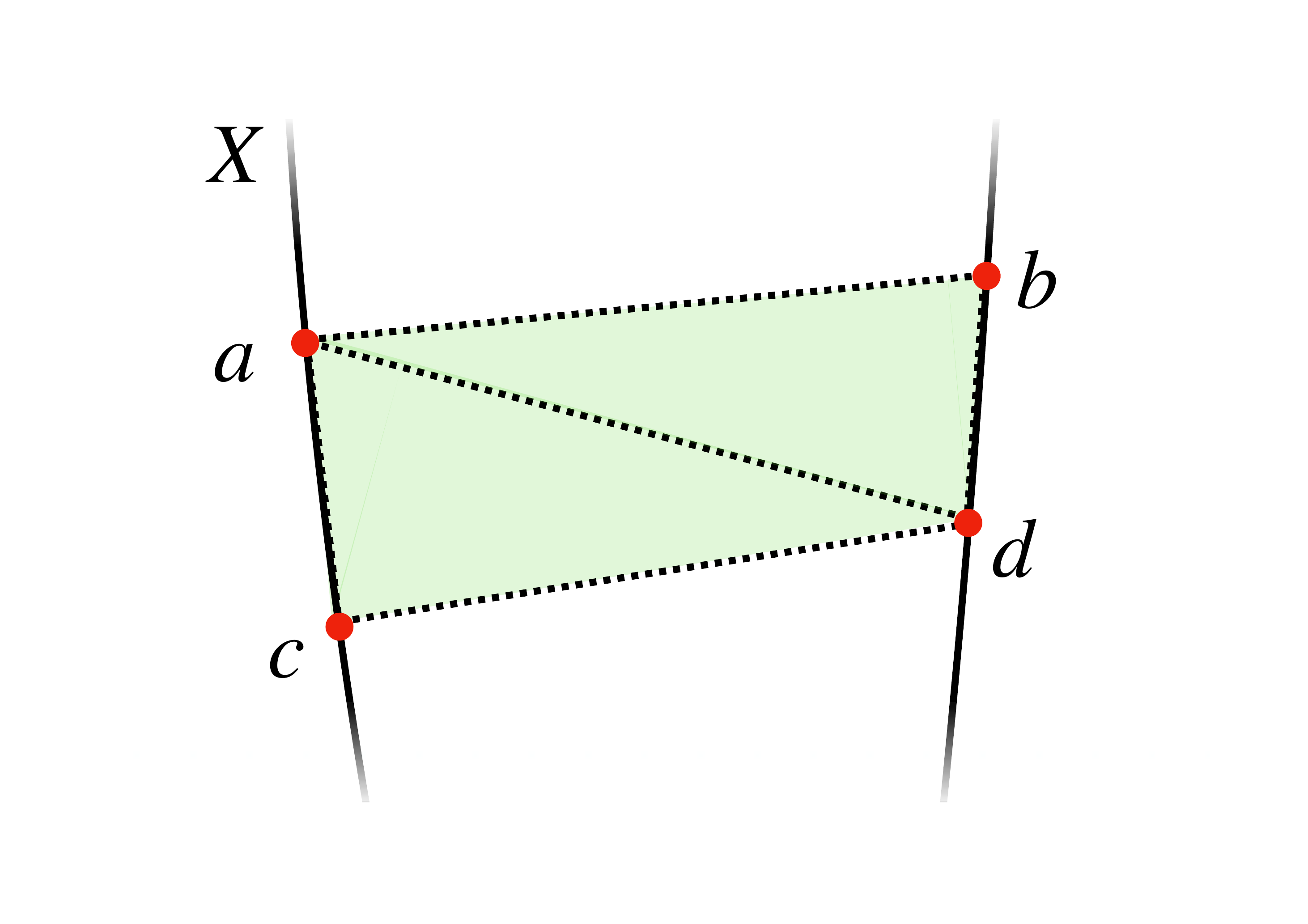}
	
		\caption{Illustration of the case where the angle of the triangle $a, b, c$ is larger than $90$ degrees at $a$.} 	\label{fig:good_edge}
	\end{figure}
	
	The union $\bar{B}_\epsilon(a) \cup \bar{B}_\epsilon(b)$ does not contain a bottleneck since $\epsilon < wfs(X)$. It follows that there are $c, d\in E$ such that $a, c$ and $b, d$ are good edges and $d(c, d) < d(a, b)$. If the angle at $a$ of the triangle $a, b, c$ is less than $90$ degrees we have both the 2-simplices $a, b, c$ and $b, d, c$ by condition $(1).(ii)$ in the construction of $S$. Otherwise, if the angle at $a$ is larger than 90 degrees we instead have the 2-simplices $a, c, d$ and $a, b, d$ by condition $(1).(ii)$ and $(2)$. This case is illustrated by Figure \ref{fig:good_edge}. Consequently, the path $a, b$ is homotopic to the path $a, c, d, b$ where $a, c$ and $d, b$ are good edges. If $c, d$ is a good edge then we are finished, otherwise we repeat the above argument and since $d(c, d) < d(a, b)$ and the number of samples are finite and $U(E, \epsilon)$ covers $X$ we will eventually terminate, resulting in a path of good edges homotopic to the original 1-simplex. 
\end{proof}
We use the above lemma to show the following theorem:
\begin{thm}\label{thm:bn_hom}
	Let $\epsilon < wfs(X)$ and let $i\in \{0, 1\}$. Let $E$ be an $\epsilon$-dense sample of $X$ and let $S$ be the modified Vietoris-Rips complex constructed from $E$ as above. Then, $$H_i(S) \cong H_i(X)$$
\end{thm}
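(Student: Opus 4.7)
The plan is to construct a continuous map $\phi \colon |S| \to X$ from the geometric realization of $S$ to $X$ and to show that the induced map $\phi_{\ast}$ is an isomorphism on $H_i$ for $i \in \{0,1\}$. On the vertex set, $\phi$ is just the inclusion $E \hookrightarrow X$. On a good edge $(a,b)$, one sends it to a continuous path on $X$ lying inside the single component of $X \cap \bar{B}$, where $\bar{B}$ is the smallest closed ball containing $a$ and $b$; such a path exists by the definition of a good edge. On a non-good edge, one invokes Lemma \ref{lem:hom} to obtain a homotopic concatenation of good edges in $S$ and assigns the corresponding concatenated path on $X$. Over a 2-simplex $(a,b,c)$, the image of the boundary is a small loop on $X$, and one extends $\phi$ by filling this loop with a continuous disk in $X$.

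For the case $i = 0$, I would combine two observations: $\epsilon$-density places a vertex of $S$ in every connected component of $X$, and any edge-path in $S$ is by Lemma \ref{lem:hom} homotopic to a path of good edges whose $\phi$-image traces a connected arc on a single component of $X$. Conversely, given a continuous path in $X$ between two sample points, subdividing it finely using $\epsilon$-density produces consecutive samples at distance $< 2\epsilon$, hence a chain of condition (i) edges of $S$. Together these yield the bijection $\pi_0(S) \cong \pi_0(X)$ induced by $\phi_{\ast}$.

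For $i = 1$, surjectivity of $\phi_{\ast}$ follows from the same approximation technique: any loop in $X$, being compact, can be $\epsilon$-approximated by a polygonal loop with vertices in $E$ and consecutive steps of length $< 2\epsilon$, producing an edge-loop in $S^{(1)}$ whose $\phi$-image is homotopic to the original loop inside the tubular neighborhood $X_{\epsilon}$, which is homotopy equivalent to $X$ by Theorem \ref{thm:bn_crit}. For injectivity, I would show that if the $\phi$-image of a 1-cycle in $S$ bounds a 2-chain in $X$, then the cycle itself bounds in $S$; this is done by taking a filling surface in $X$, triangulating it finely enough so that each small triangle has vertices near a triple in $E$ forming a 2-simplex of $S$, and chaining these 2-simplices to witness the boundary.

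The main obstacle will be step (iii) of the construction of $\phi$, namely filling each 2-simplex of $S$ by a disk on $X$. A 2-simplex of $S$ has vertex diameter at most $4\sqrt{2}\epsilon$, coming from the condition (ii) threshold $\sqrt{8}\epsilon$, so the $\phi$-image of its boundary is a loop on $X$ contained in a ball of radius on the order of $\epsilon$. To guarantee that such a loop is null-homotopic in $X$ (and not merely in some slightly larger thickening), one must invoke Theorem \ref{thm:bn_crit} to deformation retract $X_{c\epsilon} \to X$ for a suitable constant $c$, and verify that $c \epsilon$ is still below $wfs(X)$. This may require a refinement of Lemma \ref{lem:hom} ensuring that the good-edge substitution stays within a controlled neighborhood of the original edge, so that the relevant filling radius shrinks to a genuine multiple of $\epsilon$ and the hypothesis $\epsilon < wfs(X)$ suffices.
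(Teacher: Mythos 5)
Your strategy of constructing an explicit continuous map $\phi : |S| \to X$ and reading off isomorphisms from $\phi_{*}$ differs from the paper's argument, which never builds such a map: for $H_0$ the paper counts components of the cover $U(E,\epsilon)$ and notes that condition (ii) edges only join vertices already connected by an $\epsilon$-path; for $H_1$ it shows surjectivity of $H_1(S)\to H_1(X)$ by deforming any $1$-cycle of $S$ onto $X$ via Lemma \ref{lem:hom}, and argues injectivity by the inclusion $U(E,\epsilon) \subseteq X_{\epsilon}$ together with Theorem \ref{thm:bn_crit}. Your $H_0$ argument and your surjectivity argument for $H_1$ are sound and parallel the paper's.

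The obstacle you flag at the end --- filling each $2$-simplex of $S$ by a disk on $X$ --- is a genuine gap in your proposal, and you offer only a plan rather than a resolution. For $\phi_{*}$ to descend to $H_1$ you need $\phi$ defined on the $2$-skeleton (or at least that $\phi$ carries every $\partial \sigma$, $\sigma$ a $2$-simplex, to a boundary in $X$), so this is not optional. The difficulty is quantitative and worse than it first appears. A condition (ii) edge has length up to $2\sqrt{8}\,\epsilon \approx 5.66\,\epsilon$, so the straight-line image of a $2$-simplex boundary only lands in $X_{c\epsilon}$ for some $c > \sqrt{8}$, and the hypothesis $\epsilon < wfs(X)$ tells you nothing about $X_{c\epsilon}$. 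Replacing the sides by the good-edge paths from Lemma \ref{lem:hom} does not fix this: the lemma's replacement is produced by an iteration that terminates only because $E$ is finite, and carries no bound whatsoever on how far the substitute path wanders from the original edge. Without a strengthened, metrically controlled version of Lemma \ref{lem:hom}, you cannot certify that the boundary loop of a $2$-simplex lies in a ball of radius $< wfs(X)$, so you cannot invoke Theorem \ref{thm:bn_crit} to contract it, and the construction of $\phi$ stalls on the $2$-skeleton.
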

\begin{proof}
	The zeroth homology counts the number of components of $X$. Note that the length of the smallest bottleneck of $X$ is smaller or equal to half of the smallest separation between any two components of $X$. Since $U(E, \epsilon)$ covers $X$ and $\epsilon < wfs(X)$ they have the same number of components. The covering $U(E, \epsilon)$ and its Vietoris-Rips complex have the same number of components. It then follows that $U(E, \epsilon)$ and $S$ have the same number of components since condition $1.(ii)$ only adds an edge between vertices who are already connected with an $\epsilon$-path. 
	
	First note that since $\epsilon< wfs(X)$ and $U(E, \epsilon)\subseteq X_\epsilon$ we have by Theorem \ref{thm:bn_crit} that it cannot happen that a cycle on $X$ which is not contractible is contractible in $S$ as a cycle of 1-simplices. It therefore remains to show that any cycle of 1-simplices on $S$ is homotopic to a cycle in $X$. This follows from Lemma \ref{lem:hom}. Since any 1-simplex of $S$ is homotopic to a path of good 1-simplices it follows by definition that it can be deformed to lie on $X$. This implies that any 1-cycle on $S$ can be deformed to lie on $X$. Since we also have that $U(E, \epsilon)$ covers $X$ and $\epsilon <wfs(X)$ it follows that any 1-cycle on $X$ can be deformed in $U(E, \epsilon)$ to lie on a subset of the 1-simplices of $S$. We thus have and isomorphism $H_1(S) \cong H_1(X)$. 
\end{proof}


In the following example we illustrate the difference between using the density given by the reach and by smallest bottleneck, in the case when $b_2=wfs(X)$. For varieties with regions of high curvature this can make a significant difference. 
\begin{ex} \label{ex:sparse} Consider the curve $X\subset \RR^2$ in the example \cite{reach-curve2019} illustrated by Figure \ref{fig:ex} and given by the equation:
	$$(x^3 - xy^2 + y + 1)^2  (x^2 + y^2 - 1) + y^2 - 5 = 0$$
	As noted in \secref{sec:edd}, the reach can be expressed as the minimum
	$$\tau_X = \text{min}\{ \frac{1}{\rho}, b_2 \}$$
	where $\rho$ is the maximal curvature and $b_2$ is the radius of the narrowest bottleneck of $X$. Computing these quantities yields
	$$\rho \approx 2097.17, \ \ b_2 \approx 0.13835$$
	which yields that $\tau_X \approx 4.79 \cdot 10^{-4}$. Using \cite[Proposition 2.2]{BCL17}, to recover the correct homology of $X$ we thus need a sampling density of $\epsilon < \tau_X \approx 4.79 \cdot 10^{-4}$. By Theorem \ref{thm:bn_hom} it suffices to choose the sampling density $\epsilon <  b_2 \approx 0.0692$. By Theorem \ref{thm:grid} we should then choose the grid size $\delta$ less than $\frac{\epsilon}{\sqrt{n}}$. The sampling algorithm returns the following number of sample points for each of the above densities:
	
	\begin{table}[ht!]
		\begin{center}
			\begin{tabular}{cccccccc}
				\hline
				$\delta$ & $|E_{\delta}|$ \\
				\hline 
				$0.0489$ & 574\\
				$3.71 \cdot 10^{-4}$ & 58070\\
			\end{tabular}
		\end{center}
		\caption{Comparison of $|E_{\delta}|$ when the density is given by $b_2$ and $\tau_X$ respectively.}
	\end{table}
	Note in the above table that we need significantly less number of sample points in order to recover the correct homology of $X$ when the density is given by the smallest bottleneck instead of the reach. 
	\begin{figure}[ht!]
		\includegraphics[trim={0cm 2cm 0cm 2.5cm}, clip, width=0.5\linewidth]
		{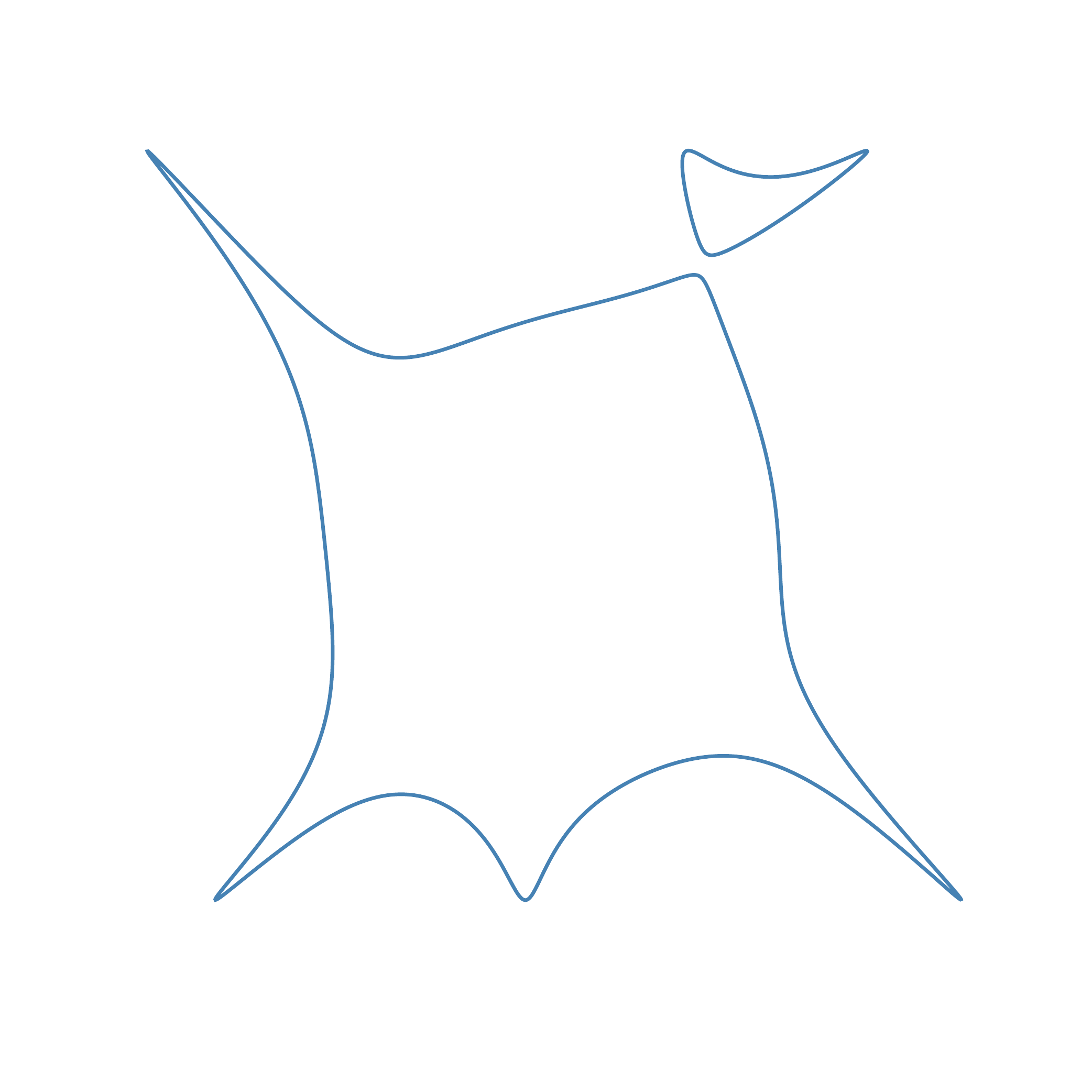} \vspace{-15pt}
		\caption{The curve $X\subset \RR^2$ in the example \cite{reach-curve2019}.}
		\label{fig:ex}
	\end{figure}
\end{ex}

\subsection{Homology using reach}

Let $X \subset \RR^n$ be a smooth compact variety. In this section we
present a variant of many statements that appear in the literature of
this topic, see for example \cite{BCL17,NSW08,CL05}. Roughly speaking,
these results show that a \u{C}ech complex associated to a dense and
accurate enough sample of $X$ deformation retracts onto $X$. In
particular $X$ and the complex have the same homology groups which
gives an algorithm to compute the homology. Given a finite
$\epsilon$-sample $E \subset X$ we have the open cover $X \subseteq
\bigcup_{e \in E} B_e(\epsilon)$ of open balls with radius
$\epsilon$. The \u{C}ech complex $C(E,\epsilon)$ is the nerve
associated to this cover. This means that the simplices of the
complex are the subsets $\sigma \subseteq E$ such that $\bigcap_{e \in
  \sigma} B_e(\epsilon) \neq \emptyset$. Our reasoning follows the
proof of Theorem 2.8 of \cite{BCL17} closely with the difference that
$\epsilon$ is compared to the local reach along the sample $E$ rather
than the global reach. This way we may find a complex with the same
homology groups as $X$ using a sample of the manifold $X$ rather than
a grid in the ambient space $S^{n-1}$ as is done in \cite{BCL17}. 


\begin{thm}\label{homology}
If $E \subset X$, $\epsilon > 0$, $E$ is $(\epsilon/2)$-dense and
$\epsilon < \frac{4}{5}\tau_X(e)$ for all $e \in E$, then $X$ is a
deformation retract of $C(E, \epsilon)$. In particular, $X$ and $C(E,
\epsilon)$ have the same homology groups.
\end{thm}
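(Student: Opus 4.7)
The plan is to adapt the argument of Theorem~2.8 of \cite{BCL17} by replacing the global reach by the local reach $\tau_X(e)$ along the sample points. Let $U \coloneqq \bigcup_{e \in E} B_e(\epsilon)$ and let $\pi_X \colon U_X \to X$ be the nearest-point projection. The strategy is to (i) establish a deformation retract from $U$ onto $X$ using a straight-line homotopy into $\pi_X$ and (ii) invoke the nerve theorem to replace $U$ by $C(E, \epsilon)$, so that the composition $C(E,\epsilon) \simeq U \to X$ furnishes the claimed deformation retraction.

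First I would check that $\pi_X$ is defined on all of $U$. For $p \in B_e(\epsilon)$, the triangle inequality gives $d(p, M_X) \geq d(e, M_X) - \|p - e\| > \tau_X(e) - \epsilon > 0$, so $U \cap M_X = \emptyset$ and hence $U \subseteq U_X$. The $(\epsilon/2)$-density hypothesis immediately yields $X \subseteq U$, and $\pi_X$ fixes $X$ pointwise.

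Next I would introduce the straight-line map $H(p, t) \coloneqq (1-t) p + t \pi_X(p)$ and verify that $H(\cdot, t) \colon U \to U$ for every $t \in [0, 1]$, with $H(\cdot, 0) = \mathrm{id}_U$ and $H(U, 1) = X$. This is where the factor $4/5$ does work. For $p \in B_e(\epsilon)$, use the density of $E$ to pick $e' \in E$ with $\|e' - \pi_X(p)\| < \epsilon/2$. I would then estimate $\|H(p,t) - e\|$ and $\|H(p,t) - e'\|$ as functions of $t$, combining the bounds $\|p - \pi_X(p)\| \leq \|p - e\| < \epsilon$, the Federer-type control of $\|\pi_X(p) - e\|$ in terms of $\|p - e\|$ and $\tau_X(e)$, and the sample-density bound on $\|e' - \pi_X(p)\|$. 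A convexity argument in $t$ then gives $H(p, t) \in B_e(\epsilon) \cup B_{e'}(\epsilon) \subseteq U$ for every $t$, provided $\epsilon < \tfrac{4}{5}\tau_X(e)$. Applying this simultaneously in $p$ and $t$ promotes $H$ to a strong deformation retraction of $U$ onto $X$.

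Finally I would apply the nerve theorem to the cover $\{B_e(\epsilon)\}_{e \in E}$ of $U$: each ball is convex, and any nonempty finite intersection of convex sets is convex and hence contractible, so the cover is good and the nerve $C(E, \epsilon)$ is homotopy equivalent to $U$. Composing with the deformation retract of $U$ onto $X$ yields the conclusion. The principal obstacle is the geometric estimate in the previous paragraph: controlling the straight-line homotopy requires a trigonometric bound that is tight when $p$ sits near the boundary of $B_e(\epsilon)$ and $\pi_X(p)$ lies nearly antipodally to $e$ with respect to $e'$, and the $4/5$ constant is precisely what is needed for that estimate to close while only using the local reach at sample points rather than the global reach.
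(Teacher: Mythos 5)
Your outline follows the same route as the paper: the straight-line homotopy $H(p,t) = (1-t)p + t\,\pi_X(p)$, the $(\epsilon/2)$-density used to produce a sample point $e'$ within $\epsilon/2$ of $\pi_X(p)$, and the decomposition of the segment $[p, \pi_X(p)]$ into a part covered by $B_e(\epsilon)$ and a part covered by $B_{e'}(\epsilon)$. Your check that $U \cap M_X = \emptyset$ via $d(p, M_X) \geq \tau_X(e) - \|p - e\| > 0$ is correct (and slightly more explicit than the paper, which just cites a proposition of \cite{BCL17}), and the nerve-theorem step you add is a reasonable way to pass from the union of balls to the abstract complex $C(E,\epsilon)$, a point the paper elides.

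However, there is a genuine gap: the only quantitatively nontrivial step — the one where the constant $4/5$ actually does work — is left as a sketch. Everything in the argument reduces to showing that when the segment $[x, \pi_X(x)]$ exits $B_e(\epsilon)$ at a boundary point $p$, one has $\|p - \pi_X(x)\| < \epsilon/2$, so that $p \in B_{e'}(\epsilon)$ and the two balls cover the whole segment. You correctly identify that this requires a ``Federer-type'' bound and a ``trigonometric bound,'' but you never carry out the estimate, and the convexity remark does not substitute for it (convexity only transports a bound from the endpoints $t \in \{t_1, 1\}$ along the segment; it does not produce the bound at the exit time $t_1$). The paper's argument here is the whole content of the theorem: extend the segment past $\pi_X(x)$ in the direction $v = x - \pi_X(x)$ to the supremal $\tau$ with $\pi_X(\pi_X(x) + \tau v) = \pi_X(x)$; by Federer the resulting point $z$ lies in the medial axis, hence $\|e - z\| \geq \tau_X(e) > \tfrac{5}{4}\epsilon$; note $\|z - \pi_X(x)\| \leq \|z - e\|$ since $e \in X$; then, assuming for contradiction $\|p - \pi_X(x)\| \geq \epsilon/2$, the law of cosines at $p$ (where the angle in the triangle $e,p,z$ is acute because the chord lies inside $B_e(\epsilon)$) forces $\|z - e\| \leq \tfrac{5}{4}\epsilon$, a contradiction. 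Without this computation, or an equivalent explicit one, the appearance of $4/5$ in the hypothesis is not justified and the proof does not close.
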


\begin{proof}
By the assumption $\epsilon < \tau_X(e)$ for all $e \in X$, the
projection $\pi_X:C(E,\epsilon) \rightarrow X$ is well defined and it
is continuous by \cite{BCL17} Proposition 2.2. We will show that
$(t\pi_X(x)+(1-t)x) \in C(E,\epsilon)$ for all $x \in
C(E,\epsilon)$. Once this is established the deformation retraction
is, as in \cite{BCL17,NSW08,CL05}, defined by $$C(E,\epsilon) \times
[0,1] \rightarrow C(E,\epsilon):(x,t) \mapsto t\pi_X(x)+(1-t)x.$$

For $a,b \in \RR^n$, let $[a,b]=\{tb+(1-t)a:t\in [0,1]\}$ and define
$[a,b)$, $(a,b]$ and $(a,b)$ similarly by excluding the indicated end
points. Let $e \in E$, let $B=B_e(\epsilon) \subseteq C(E,\epsilon)$
and let $x \in B$. We need to show that $[x,\pi_X(x)] \subseteq
C(E,\epsilon)$. If $\pi_X(x) \in B$ we are done since then
$[x,\pi_X(x)] \subseteq B$. Assume that $\pi_X(x) \notin B$ and let
$L$ be the line joining $x$ and $\pi_X(x)$. Since $L$ intersects $B$,
it intersects the boundary $\partial B$ in two distinct points on
$L$. Call these $p \in L$ and $q \in L$ were $p$ is closer to
$\pi_X(x)$ than $q$. We need to show that $||p-\pi_X(x)|| <
\epsilon/2$. Once this has been proven we may reason as follows. Since
$E$ is $(\epsilon/2)$-dense, there is a $e' \in E$ such that
$||\pi_X(x)-e'|| < \epsilon/2$. This means that $\pi_X(x) \in
B_{e'}(\epsilon)$ and $p \in B_{e'}(\epsilon)$ by the triangle
inequality. Hence $[p,\pi_X(x)] \subset B_{e'}(\epsilon)$. But $[x,p)
  \subset B_e(\epsilon)$ and hence $[x,\pi_X(x)] \subset
  C(E,\epsilon)$.

Let $y=\pi_X(x)$, $v=x-y$ and $\tau = \sup \{t \in \RR:
\pi_X(y+tv)=y\}$. Then $0<\tau$ since $\pi_X(x)=y$. Moreover, $\tau <
\infty$ since if $t_0 \in \RR$ is the unique real number such that
$y+t_0v$ intersects the hyperplane $\{u \in \RR^n : ||u-e||=||u-y||
\}$, then $\tau \leq t_0$ since $e \in X$. Put $z=y+\tau v$. By
\cite{F59} Theorem 4.8 (6), $z$ lies in the medial axis of $X$ and
hence $||e-z|| \geq \tau_X(e) > \frac{5}{4}\epsilon$. Also, by the
definition of $\tau$, $||z-y||\leq ||z-e||$. We need to show
$||y-p||<\epsilon/2$ so assume that $||y-p|| \geq \epsilon/2$. Then
$||p-z||= ||y-z||-||y-p|| \leq ||z-e||-\epsilon/2$. Since the line
segment $(q,p)$ is inside the bass $B_e(\epsilon)$, the angle at $p$
of the triangle formed by $e,p$ and $z$ is acute.  If follows from the
cosine theorem that $$||e-z||^2 \leq ||p-e||^2+||p-z||^2=\epsilon^2 +
||p-z||^2 \leq \epsilon^2 + (||z-e||-\epsilon/2)^2.$$
Hence $$||e-z||^2 \leq
\epsilon^2+||z-e||^2-||z-e||\epsilon+\epsilon^2/4$$ which implies that
$||z-e|| \leq \frac{5}{4}\epsilon$, a contradiction.
\end{proof}

\bibliographystyle{plain} \bibliography{collection}

\end{document}